\newcommand{\cal}{\mathcal}
\newcommand{\reals}{\mbox{$\mathbb R$}}
\newcommand{\nats}{\mbox{$\mathbb N$}}
\newcommand{\ints}{\mbox{$\mathbb Z$}}
\newcommand{\spn}{{\mathop{\mathrm{Span}}\nolimits}}
\newcommand{\soc}{{\mathop{\mathrm{Soc}}\nolimits}}
\newcommand{\doc}{{\mathop{\partial\mathrm{oc}}\nolimits}}
\newcommand{\m}{{\mathop{\mathbf{m}}\nolimits}}
\newcommand{\p}{{\mathop{\mathbf{p}}\nolimits}}
\newcommand{\q}{{\mathop{\mathbf{q}}\nolimits}}
\newcommand{\comment}[1]{}
\DeclareRobustCommand{\stirling}{\genfrac\{\}{0pt}{}} %% Stirling #
\def\squarebox#1{\hbox to #1{\hfill\vbox to #1{\vfill}}}
\def\qed{\hspace*{\fill}
        \vbox{\hrule\hbox{\vrule\squarebox{.667em}\vrule}\hrule}\smallskip}
\theoremstyle{plain}
\newtheorem{lemma}{Lemma}[section]
\newtheorem{theorem}[lemma]{Theorem}
\newtheorem{corollary}[lemma]{Corollary}
\newtheorem{proposition}[lemma]{Proposition}
\theoremstyle{definition}
\newtheorem{claim}[lemma]{Claim}
\newtheorem{observation}[lemma]{Observation}
\newtheorem{definition}[lemma]{Definition}
\newtheorem{question}[lemma]{Question}
\newtheorem*{rmk*}{Remark}
\newtheorem*{rmks*}{Remarks}
\newtheorem*{conventions*}{Conventions}
\newtheorem*{convention*}{Convention}
\newtheorem*{example*}{Example}
\def\squareforqed{\hbox{\rlap{$\sqcap$}$\sqcup$}}
\def\qed{\ifmmode\squareforqed\else{\unskip\nobreak\hfil
\penalty50\hskip1em\null\nobreak\hfil\squareforqed
\parfillskip=0pt\finalhyphendemerits=0\endgraf}\fi}
\newlength{\tablength}
\newlength{\spacelength}
\newcommand{\tabstar}{\hspace*{\tablength}}
\newcommand{\spacestar}{\hspace*{\spacelength}}
\def\obeytabs{\catcode`\^^I=\active}
{\obeytabs\global\let^^I=\tabstar}
{\obeyspaces\global\let =\spacestar}
\newenvironment{display}{\begingroup\obeylines\obeyspaces\obeytabs}{\endgroup}
\newenvironment{prog}{\begin{display}\parskip0pt\sf}{\end{display}}
\author{Geir Agnarsson}
\address{Department of Mathematical Sciences \\ George Mason University \\ Fairfax, VA  22030}
\email{geir@math.gmu.edu}
\author{Neil Epstein}
\address{Department of Mathematical Sciences \\ George Mason University \\ Fairfax, VA  22030}
\email{nepstei2@gmu.edu}
\title{On monomial ideals and their socles}
\subjclass[2010]{13B25, 13A02, 06A07}
\keywords{poset, 
upset, 
downset,
polynomial ring,
monomial ideal,
socle,
duality,
Artinian ideal,
Gorenstein ideal,
type $k$ monomial ideal.}
\date{\today}
\begin{document}

\begin{abstract}
For a finite subset $M\subset [x_1,\ldots,x_d]$ of monomials,
we describe how to constructively obtain a monomial ideal 
$I\subseteq R = K[x_1,\ldots,x_d]$ such that the set of monomials
in $\soc(I)\setminus I$ is precisely $M$, or
such that $\overline{M}\subseteq R/I$ is a $K$-basis for the
the socle of $R/I$. For a given $M$ we obtain a natural class of monomials $I$ 
with this property. This is done by using solely the 
lattice structure of the monoid $[x_1,\ldots,x_d]$. 
We then present some duality results by using anti-isomorphisms between 
upsets and downsets of $({\ints}^d,\preceq)$.
Finally, we define and analyze zero-dimensional monomial ideals of $R$ 
of type $k$, where type $1$ are exactly the Artinian Gorenstein ideals,
and describe the structure of such ideals that correspond to order-generic 
antichains in ${\ints}^d$.
\end{abstract}

%\subjclass{13B25, 13A02, 06A07}
%\vspace{3 mm}

%\noindent {\bf 2010 MSC:}
%13B25,  % View Publications (1973-now) Polynomials over commutative rings 
   % [See also 11C08, 11T06, 13F20, 13M10]
%13A02,  % View Publications (1991-now) Graded rings [See also 16W50]
%06A07.   % View Publications (1991-now) Combinatorics of partially ordered sets

%\vspace{2 mm}

%\end{abstract}
\maketitle

\section{Introduction}
\label{sec:intro}

Aside from the algebraic benefits of the socle of a module
over a commutative ring, when studying local rings, Cohen-Macaulay
rings or Gorenstein rings~\cite{Bruns-Herzog,Eisenbud,MonAlg}
the socle of an ideal, in particular that of a monomial ideal 
of the polynomial ring over a field, has a rich
lattice structure that many times can be studied solely
by using combinatorial arguments. Consider the following:
any range of mountains on a given piece of land where each 
mountain has the same cone-type shape is, of course,
determined by the loci of the mountain tops. But is it 
determined by the mere valleys these mountains form?
The answer depends on what additional 
conditions we know the mountain range has. This is one of the 
motivating questions in this article when we view a monomial ideal as a 
mountain range and the valleys it forms as the elements 
in the socle of the ideal that are not in the ideal.

For a field $K$, monomial ideals
of the polynomial ring $R = K[x_1,\ldots,x_d]$ play a pivotal role
in the investigation of Gr\"{o}bner bases of general ideals of
$R$~\cite{Becker-etal,Loust}. The underlying reason as to
why monomial ideals are so convenient, yet so important, in 
investigating reduction systems for general ideals of $R$ stems
in part from the fact that for every monomial ideal $I = (m_1,\ldots,m_k)$,
where no $m_i$ divides another $m_j$, has a Gr\"{o}bner basis
$M = \{m_1,\ldots,m_k\}$; exactly the minimum set of generators for $I$.
For $A\subseteq \{1,\ldots,k\}$ let $m_A$ denote the least common
multiple of the $m_i : i\in A$. With this convention one can
define the {\em Scarf complex} $\Delta(I)$ of the monomial ideal $I$
as the simplicial complex consisting of all the subsets
$A\subseteq \{1,\ldots,k\}$ with unique least common multiple $m_A$,
that is
\[
\Delta(I) = 
\{ A\subseteq \{1,\ldots,k\} : m_A \neq m_B \mbox{ for all } B\neq A\}.
\]
The Scarf complex discussed in~\cite{Bayer-Peeva-Sturmfels}
and~\cite{Miller-Sturmfels} was first introduced in~\cite{HerbertScarf}.
It is easy to see that the facets $F_{d-1}(\Delta(I))$ of the Scarf complex
$\Delta(I)$ are in bijective correspondence with the maximal monomials
of $R\setminus I$ (w.r.t.~the divisibility partial order), which is exactly
the set of monomials of the socle $\soc(I)$ that
are not in $I$ (see definition in the following section.) 
The cardinality of this
very set of monomials has many interesting combinatorial interpretations,
two of which we will briefly describe here below.

We say that an ideal $I$ of $R$ is {\em co-generated} by a set
${\cal{F}}$ of $K$-linear functionals $R\rightarrow K$ if $I$ is
the largest ideal of $R$ contained in all the kernels of the functionals
in ${\cal{F}}$. A celebrated result by Macaulay from
1916~\cite{Macaulay} states that every ideal of $R$ is finitely co-generated,
so $R$ is {\em co-Noetherian} in this sense. More specifically, it turns
out that any monomial ideal $I$ of $R$ has at least $|F_{d-1}(\Delta(I))|$
co-generators and can always be co-generated by
$|F_{d-1}(\Delta(I))| + 1$ functionals (see~\cite{A-co-gen}). 
Further, for a given ideal 
$I$ (not necessarily monomial) of $R$ and a fixed term order,
then $I$ has a Gr\"{o}bner basis where the head or leading terms
of the basis elements form a corresponding monomial ideal $L(I)$ of $R$,
and $I$ can then be co-generated by one functional if
$|F_{n-1}(\Delta(L(I)))| < d$ and by $|F_{n-1}(\Delta(L(I)))| + 1$ functionals
otherwise~\cite{A-co-gen}.

The cardinality $|F_{d-1}(\Delta(I))|$ is also linked to the number
of edges in a simple graph on $k$ vertices in the following way.
For given $d,k\in\nats$ let $c_d(k)$ denote the maximum number
of facets $|F_{d-1}(\Delta(I))|$ among all monomial ideals $I$ of
$R$ that are minimally generated by $k$ monomials.
In general, the Scarf complex $\Delta(I)$ is always
a sub-complex of the boundary complex of a simplicial
polytope $P(I)$ on $k$ vertices where one facet is missing.
When $I$ is Artinian (or zero-dimensional) 
and generic (i.e.~the powers of all $x_i$ in
the generators for $I$ are distinct) then $\Delta(I)$ actually equals this
boundary complex $P(I)$ with one facet missing~\cite{Miller-Sturmfels}.
Using the Dehn-Sommerville equations for the simplicial polytope
$P(I)$ one can show that determining $c_d(k)$ for $d\leq 5$
is equivalent to determining the maximum number of edges of the
$1$-skeleton of $P(I)$. This turns out to be the same problem
of determining the maximum number of a simple graph on $k$ vertices
of order dimension at most $d$~\cite{A-outside}.

We therefore see that the number of monomials in $\soc(I)\setminus I$,
the socle of the monomial ideal $I$ that are not in $I$, has some
interesting combinatorial interpretations in addition to the known
algebraic ones, in particular, the maximum number that the set
$\soc(I)\setminus I$ can have. Trivially we have $c_2(k) = k-1$ 
for any $k\in\nats$ and less trivially we know that 
$c_3(k) = 2k-5$ for $k\geq 3$~\cite{A-outside,Miller-Sturmfels}.
For each $d\geq 4$ the exact value of $c_d(k)$ is still unknown, although
the asymptotic behavior does satisfy $c_d(k) = \Theta(k^{\lfloor d/2\rfloor})$ 
as $k$ tends to infinity and $d$ is fixed~\cite{A-outside}. In any case,
we see from this that a monomial ideal minimally generated by $k$ monomials,
can ``generate'' monomials in its socle of cardinality
considerably larger order than that of $k$. In order for that to occur though,
the $k$ monomials that generate the given ideal must be special and relate
to each other in a singular way. By the same token, the resulting monomials in
$\soc(I)\setminus I$ also relate in a special way if they are generated
by ``few'' generating monomials of $I$. Looking away from these
extreme cases for a moment, some natural questions about monomial
ideals and their socles, in particular about those monomials in the
socle and not in the ideal, arise.
\begin{enumerate}[label=(\roman*)]
\item Assume that for a given (apriori
unknown) monomial ideal $I$ the set $\soc(I)\setminus I$ is given,
is it always possible to retrieve the ideal $I$ from it, if not, what 
additional structure is needed? In which cases
is it unique?
\item Given any set of monomials that form an antichain w.r.t.~divisibility
order, can one always find a monomial ideal $I$ such that this set
of monomials has the form $\soc(I)\setminus I$? Is the ideal $I$ unique?
If not, can they be characterized in some way?
\end{enumerate}
The purpose of this
article is to address these questions, discuss uniqueness and
non-uniqueness, present up-down duality results, discuss generalizations to
general ideals of $R$ and address what is similar with the monomial
ideal case and what is not.

%This article was in part inspired by the MS Thesis of
%Anne-Rose Wolff~\cite{Anna-Rose-Thesis}, in which
%the case of $d=2$ (so $R = K[x,y]$ is the polynomial ring in
%two variables) is studied in detail.

This article was in part inspired by the M.S.~thesis of
Anna-Rose Wolff~\cite{Wol16}.
She analyzed the \emph{survival complex} $\Sigma(R/I)$ of $R/I$ where
$I$ is a monomial ideal that contains powers of all the variables of $R$.
The survival complex is a simplicial complex whose vertices are the
monomials of $R$ that are not in $I$, where a simplex consists of a
set of monomials whose product is not in $I$.  She showed
\cite[Prop.~2.2.1]{Wol16} that the truly isolated points
of that complex correspond to the monomial basis of the socle of
$I$, a set which in this document we call $\doc(I)$
(see Definition~\ref{def:doc(I)}.) As such,
it was natural for her to address the question we address here,
completely solving the two-variable case
\cite[Algorithm 3.3.1 and Proposition 3.3.3]{Wol16}
and making headway on the three-variable case
\cite[Algorithm 3.4.1]{Wol16}.

The rest of this article is organized as follows:

In Section~\ref{sec:posets} we define some basic concepts about
partially ordered sets that we will be using and referring to throughout the
article.

In Section~\ref{sec:socle-ideal} we show that if for a monomial
ideal $I$ the set $\soc(I)\setminus I$ is given, then we can
retrieve a monomial ideal $I$ back from a derived version of the set
$\soc(I)\setminus I$.

In Section~\ref{sec:up-down} we present and use an intuitively obvious 
duality between monomial ideals of $R$ on the one hand and
``down-sets'' of monomials on the other hand, 
that contain all elements dividing a given element of the down-set 
(see definitions in Section~\ref{sec:up-down}).

In Section~\ref{sec:zero-monomial} we elaborate more explicitly the discussion
presented in Section~\ref{sec:socle-ideal} and show that if
$\soc(I)\setminus I$ is given, then there is a unique zero-dimensional
ideal $I$ yielding the given set of monomials. As a corollary we
obtain a well-known description of Artinian monomial
ideals of $R$ that are Gorenstein. We then use what we have established
to describe Artinian monomial ideals that are {\em type 2 Gorenstein} (where
{\em type 1 Gorenstein} is the usual Gorenstein notion (see definitions in
Section~\ref{sec:zero-monomial})).

In Section~\ref{sec:kgeq3} we discuss further Artinian
{\em type $k$ Gorenstein} monomial ideals and describe their 
{\em order-generic} case.

In Section~\ref{sec:general} we discuss the socle of ideals of
$R = K[x_1,\ldots,x_d]$ in more general settings when $K$ is an algebraically
closed field.

Finally, in Section~\ref{sec:some-q} we summarize our main results
and pose some questions.

\section{Partially ordered sets, basic definitions and notations}
\label{sec:posets}

The set of integers will be denoted by $\ints$, the
set of natural numbers by $\nats$ and the set of
non-negative numbers $\nats\cup\{0\}$ by $\nats_0$. For each
$n\in\nats$ we let $[n] := \{1,\ldots,n\}$. 

Throughout, $R = K[x_1,\ldots,x_d]$ will denote the polynomial ring
over a field $K$. By the {\em socle} of an ideal $I\subseteq R$
w.r.t.~the maximal ideal $\m = (x_1,\ldots,x_d)$ of $R$, we will mean the ideal 
\[
\soc(I) = \soc_{\m}(I) := (I:\m) = \{f\in R: x_if\in I \mbox{ for each }i\}.
\]
Note that for a monomial ideal $I\subseteq R$, the set
$\soc(I)\setminus I$ contains monomials $a$ such that
(i) $a\not\in I$ and (ii) $x_ia\in I$ for every $i\in[d]$.
\begin{definition}
\label{def:doc(I)}
For a monomial ideal $I$ and the maximal ideal $\m = (x_1,\ldots,x_d)$ 
of $R$, let $\doc(I)$ denote the set of the monimials in
$\soc(I)\setminus I$.
\end{definition}
\newpage
\begin{rmks*}\ 
\begin{enumerate}[label=(\roman*)]
\item By the above Definition~\ref{def:doc(I)}, we have
$\soc(I) = I\oplus\spn_K(\doc(I))$ as a $K$-vector spaces.
\item The image $\overline{\soc(I)} = (\overline{0}:\m/I) = \soc(R/I)$
in $R/I$ has a basis as a vector space over $K$ consisting of the images of the
monomials in $\doc(I)$ (see ~\cite{MonAlg,RingsCatMod}.)
Hence, $\doc(I)$ consists exactly of the maximal monomials in
$R\setminus I$ w.r.t.~the divisibility partial order and
$|\doc(I)| = \dim_K(\soc(R/I))$.
\end{enumerate}
\end{rmks*}

Assume for a moment we have a general ideal $I$ and a general maximal 
ideal $\m$ of $R$. If $\m$ does not contain $I$, then $\m+I = R$ and hence for
$f\in (I:\m)$ we have $f \in fI + f\m \subseteq I$. Therefore 
$(I:\m) = I$ and so $\soc_{\m}(I)$ is trivial.
Therefore, {\em for an ideal $I$ and a maximal ideal $\m$ of $R$, the socle
$\soc_{\m}(I)$ is only of interest when $I\subseteq \m$.}
Hence, unless otherwise stated, we will always assume $I\subseteq \m$.

Most of what we establish from now on about monomial ideals
of $R = K[x_1,\ldots,x_d]$ will only use the the monoid
$[x_1,\ldots,x_d]$ as a partially ordered set (poset)
with the partial order given by divisibility.
We present some basic definitions and notations for general posets
that we will use throughout the article.

For a poset $(P,\leq)$ recall that $C\subseteq P$
is a {\em chain} if $C$ forms a linearly or totally ordered
set within $(P,\leq)$. A subset $N\subseteq P$ is an {\em antichain}
or a {\em clutter} if no two elements in $N$ are comparable
in $(P,\leq)$.

For a poset $(P,\leq)$ call a subset $U\subseteq P$ an {\em upset},
or an {\em up-filter}, of $P$ if $x\geq u\in U \Rightarrow x\in U$.
Call a subset $D\subseteq P$ a {\em downset},
or a {\em down-filter}, of $P$ if if $x\leq d\in D \Rightarrow x\in D$.

For an upset $U$ of a poset $P$ let $S_d(U) := \max(P\setminus U)$
be the maximal elements of $P\setminus U$. 
For a downset $D$ of a poset $P$ let $S_u(D) := \min(P\setminus D)$
be the minimal elements of $P\setminus D$.

For a subset $A\subseteq P$,
let $U(A) := \{x\in P : x\geq a \mbox{ for some } a\in A\}$
be the {\em upset generated by $A$}, and 
$D(A) := \{x\in P : x\leq a \mbox{ for some } a\in A\}$
the {\em downset generated by $A$}. If $A = \{a_1,\ldots,a_n\}$
is finite, then we write $U(a_1,\ldots,a_n)$ (resp.~$D(a_1,\ldots,a_n)$)
for $U(A)$ (resp.~$D(A)$.) 

Recall that ${\ints}^d$ has a natural partial order $\preceq$
where for $\tilde{a} = (a_1,\ldots,a_d)$ and $\tilde{b} = (b_1,\ldots,b_d)$
we have $\tilde{a}\preceq\tilde{b} \Leftrightarrow a_i\leq b_i$ for
each $i\in [d]$. This is a generalization of the partial order
given in~\cite[Ex.~19, p.~71]{DiscMath}, and we will henceforth
be using ``$\leq$'' for this partial order ``$\preceq$''. 
The usual componentwise addition $+$, which makes $({\ints}^d,+)$ an
abelian group, respects the mention partial order $\leq$.

For any fixed point $\tilde{c}\in {\ints}^d$, the map
$\tau_{\tilde{c}} : {\ints}^d \rightarrow {\ints}^d$ given by 
$\tau_{\tilde{c}}(\tilde{x}) = \tilde{c} + \tilde{x}$ is an 
order preserving translation, and hence an order automorphism,
of the poset $({\ints}^d, \leq)$. The following is easy to obtain.
\begin{observation}
\label{obs:translation}
For any $A\subseteq {\ints}^d$ and $\tilde{c}\in {\ints}^d$
we have
\[
\tau_{\tilde{c}}(U(A)) = U(\tau_{\tilde{c}}(A)), \ \ 
\tau_{\tilde{c}}(D(A)) = D(\tau_{\tilde{c}}(A)).
\]
Further, for any upset $U\subseteq {\ints}^d$ and downset 
$D\subseteq {\ints}^d$ we have
\[
\tau_{\tilde{c}}(S_d(U)) = S_d(\tau_{\tilde{c}}(U)), \ \ 
\tau_{\tilde{c}}(S_u(D)) = S_u(\tau_{\tilde{c}}(D)).
\]
\end{observation}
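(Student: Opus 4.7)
The plan is to exploit the fact that $\tau_{\tilde{c}}$ is an order automorphism of $(\ints^d,\leq)$: it is a bijection on $\ints^d$ whose inverse is $\tau_{-\tilde{c}}$, and for all $\tilde{x},\tilde{y}\in\ints^d$ one has $\tilde{x}\leq\tilde{y}$ iff $\tilde{c}+\tilde{x}\leq\tilde{c}+\tilde{y}$. Any set operation defined purely in order-theoretic terms (generated upset, generated downset, set of maximal/minimal elements of a subset) must therefore commute with $\tau_{\tilde{c}}$.

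For the first pair of identities I would just unwrap the definitions. An element $\tilde{y}$ lies in $\tau_{\tilde{c}}(U(A))$ iff $\tilde{y}=\tilde{c}+\tilde{x}$ for some $\tilde{x}\geq\tilde{a}\in A$, which is equivalent to $\tilde{y}\geq\tilde{c}+\tilde{a}\in\tau_{\tilde{c}}(A)$, i.e.\ $\tilde{y}\in U(\tau_{\tilde{c}}(A))$. The downset identity follows by the same argument with inequalities reversed, or alternatively by applying the upset identity in the dual poset.

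For the second pair I would first observe that $\tau_{\tilde{c}}$ maps the complement of any set $X\subseteq\ints^d$ bijectively to the complement of $\tau_{\tilde{c}}(X)$; applied to an upset $U$ this gives $\tau_{\tilde{c}}(\ints^d\setminus U)=\ints^d\setminus\tau_{\tilde{c}}(U)$. Then, because $\tau_{\tilde{c}}$ is an order isomorphism, it carries the set of maximal elements of any subset to the set of maximal elements of its image, so
\[
\tau_{\tilde{c}}(S_d(U))=\tau_{\tilde{c}}(\max(\ints^d\setminus U))=\max(\tau_{\tilde{c}}(\ints^d\setminus U))=\max(\ints^d\setminus\tau_{\tilde{c}}(U))=S_d(\tau_{\tilde{c}}(U)).
\]
The statement for $S_u$ on a downset is handled identically with $\min$ in place of $\max$.

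There is really no main obstacle: the content of the observation is that translation is an order automorphism of $\ints^d$, and each of the four identities is an immediate consequence. The only thing to be careful about is to split the verification into the two conceptual steps above (commuting with complementation as a bijection, and commuting with taking extremal elements as an order isomorphism) rather than trying to do a single elementwise chase, which would obscure why the identities hold for every upset/downset uniformly.
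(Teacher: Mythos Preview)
Your proposal is correct. The paper does not actually give a proof of this observation: it merely introduces it with the phrase ``The following is easy to obtain'' and states the identities, so your argument simply supplies the routine order-automorphism verification that the authors leave to the reader.
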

Writing out what the above observation states in terms of $\tilde{c}$ we get
\begin{gather*}
U(A+\tilde{c}) = U(A) + \tilde{c}, \quad D(A+\tilde{c}) = D(A) + \tilde{c},\\
S_d(U + \tilde{c}) = S_d(U) + \tilde{c}, \quad S_u(D + \tilde{c}) = S_u(D) + \tilde{c}.
\end{gather*}

An upset $U\subseteq {\ints}^d$ is {\em cofinite}
if here is a $\tilde{c}\in {\ints}^d$ such that
(i) $U\subseteq U(\tilde{c})$ and (ii) $U(\tilde{c})\setminus U$
is finite. Likewise a downset $D\subseteq {\ints}^d$ is
{\em cofinite} if there is a $\tilde{c}\in{\ints}^d$
such that (i) $D\subseteq D(\tilde{c})$ and (ii) $D(\tilde{c})\setminus D$
is finite.

The map given by
$\tilde{p} = (p_1,\ldots,p_d)\mapsto x_1^{p_1}\cdots x_d^{p_d}
= {\tilde{x}}^{\tilde{p}}$ is an isomorphism between
the additive monoid ${\nats}_0^d$ and the multiplicative
monoid $[x_1,\ldots,x_d]$. Moreover, the map is also
an order isomorphism from the poset $({\nats}_0^d,\leq)$, where
$\leq$ is the partial order of ${\ints}^d$ from above,
to $[x_1,\ldots,x_d]$ as a poset given by divisibility.
Hence as the map is an order preserving monoid isomorphism,
we will for the most part in the next two sections 
deal with the additive monoid ${\nats}_0^d$
and the abelian group ${\ints}^d$ instead of the multiplicative
monoid $[x_1,\ldots,x_d]$ within the ring $R$. Via this isomorphism
we have a bijective correspondence between monomial ideals of $R$ and
upsets of the poset ${\nats}_0^d$.

\section{Retrieving an upset from maximal elements not in the upset}
\label{sec:socle-ideal}

We will in this section and the following section 
describe everything in terms of ${\nats}_0^d$ or ${\ints}^d$ 
instead of the ring $R = K[x_1,\ldots,x_d]$ as we justified in
the last paragraph in the previous section.

Let $G = \{\tilde{p}_1,\ldots,\tilde{p}_k\}\subseteq {\nats}^d$
be an antichain.
For the moment we assume that $G$ is {\em positive}, 
i.e.~all the coordinates of each $\tilde{p}_i$ are strictly positive. 
(This is by no means a restrictive assumption as will become clear
as we go along.)
If $\tilde{p}_i = (p_{i\/1},\ldots,p_{i\/d})$ for each $i$, then
for each $j\in[d]$ let 
\begin{equation}
\label{eqn:mi-Mi}
m_j = \min\{ p_{i\/j} : 1\leq i\leq d\} - 1, \ \ 
M_j = \max\{ p_{i\/j} : 1\leq i\leq d\} + 1
\end{equation}
and 
\begin{eqnarray}
\label{eqn:B^*}
\lefteqn{B^*(G) = } \nonumber \\
& & \{(M_1,m_2,m_3\ldots,m_d), (m_1,M_2,m_3,\ldots,m_d),
(m_1,m_2,M_3,\ldots,m_d), \nonumber \\
& & \ldots,(m_1,m_2,m_3,\ldots,M_d)\}. 
\end{eqnarray}
Let $G^* = G \cup B^*(G)$. That $\tilde{r}\in S_d(U(G))$,
means precisely that $\tilde{r}\not\in U(G)$ and 
$\tilde{r}+\tilde{e}_j\in U(G)$ for each $j$,
where $\tilde{e}_j$ is the usual basis vector for ${\reals}^d$
with $1$ in the $j$-coordinate and $0$ everywhere else.
Since $\tilde{r}\not\in U(G)$ and $\tilde{r}+\tilde{e}_j\in U(G)$ 
for each given $j$, then for some $\tilde{p}_i\in G$ we have
$r_j = p_{i\/j} - 1 < M_j$. Since this
holds for each $j$, then $\tilde{r}$ cannot be greater
than any element of $B^*(G)$ and so $\tilde{r}\not\in U(B^*(G))$
and hence $\tilde{r}\not\in U(G^*)$. Since 
$G\subseteq G^*$ and so $U(G)\subseteq U(G^*)$,
we have $\tilde{r}+\tilde{e}_j\in U(G^*)$ for each $j$,
and so $\tilde{r}\in S_d(U(G^*))$.

Also we have $U(G)\subseteq U(G^*)\subseteq U(\tilde{m})$,
where $\tilde{m} = (m_1,\ldots,m_d)$ from (\ref{eqn:mi-Mi}),
and $U(\tilde{m})\setminus U(G^*)\subseteq U(\tilde{m})\setminus U(B^*(G))$,
which is finite. We summarize in the following.
\begin{proposition}
\label{prp:G^*}
For any antichain 
$G = \{\tilde{p}_1,\ldots,\tilde{p}_k\}\subseteq {\nats}^d$ we have
\[
S_d(U(G))\subseteq S_d(U(G^*))\subseteq U(\tilde{m})\setminus U(G^*),
\]
which is a finite set, and hence $U(G^*)$ is cofinite.
\end{proposition}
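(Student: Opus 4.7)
The plan is to verify the two set inclusions in turn and then to bound $U(\tilde{m})\setminus U(G^*)$ inside an axis-aligned box, from which cofiniteness of $U(G^*)$ drops out with witness $\tilde{c}=\tilde{m}$. Much of the first inclusion is already sketched in the paragraph preceding the statement, so the main work is to organize the argument cleanly and fill in the second inclusion together with the finiteness bound.

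For $S_d(U(G))\subseteq S_d(U(G^*))$, given $\tilde{r}\in S_d(U(G))$ I would recall that $\tilde{r}+\tilde{e}_j\in U(G)$ and $\tilde{r}\notin U(G)$ force some $\tilde{p}_i\in G$ with $p_{i\/j}=r_j+1$, hence $r_j<M_j$ at every coordinate $j$. Since every element of $B^*(G)$ has some coordinate equal to the corresponding $M_\ell$, $\tilde{r}$ dominates no element of $B^*(G)$; combined with $\tilde{r}\notin U(G)$ this yields $\tilde{r}\notin U(G\cup B^*(G))=U(G^*)$, while $U(G)\subseteq U(G^*)$ lifts $\tilde{r}+\tilde{e}_j\in U(G)$ to membership in $U(G^*)$, so $\tilde{r}\in S_d(U(G^*))$. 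For the second inclusion I would first observe that $G^*\subseteq U(\tilde{m})$ (each $\tilde{p}_i$ satisfies $p_{i\/j}\geq m_j+1$, and each element of $B^*(G)$ has coordinates in $\{m_\ell,M_\ell\}$), so $U(G^*)\subseteq U(\tilde{m})$. Then, for $\tilde{r}\in S_d(U(G^*))$ and any coordinate $\ell$, picking some $j\neq\ell$ and using $\tilde{r}+\tilde{e}_j\in U(G^*)\subseteq U(\tilde{m})$ gives $r_\ell\geq m_\ell$, so $\tilde{r}\in U(\tilde{m})$. Combined with $\tilde{r}\notin U(G^*)$ this places $\tilde{r}$ in $U(\tilde{m})\setminus U(G^*)$.

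Finally, to establish finiteness I would use the chain $U(\tilde{m})\setminus U(G^*)\subseteq U(\tilde{m})\setminus U(B^*(G))$ and argue that any $\tilde{x}\geq\tilde{m}$ failing to dominate a given $\tilde{b}\in B^*(G)$ must undershoot $\tilde{b}$ at its unique ``large'' coordinate $\ell$ where $b_\ell=M_\ell$, since the remaining coordinates of $\tilde{b}$ equal $m_\ell$ and are already cleared by $\tilde{x}\geq\tilde{m}$. Ranging over all $\tilde{b}\in B^*(G)$ forces $m_\ell\leq x_\ell<M_\ell$ at every coordinate $\ell$, pinning $\tilde{x}$ inside the finite box $\prod_\ell[m_\ell,M_\ell-1]$. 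Cofiniteness of $U(G^*)$ follows immediately. The only bookkeeping worth watching is the coordinate-matching between each $\tilde{b}\in B^*(G)$ and its distinguished $M_\ell$-slot; no step is technically deep.
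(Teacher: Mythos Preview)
Your proposal is correct and follows essentially the same route as the paper: the first inclusion is exactly the argument in the paragraph preceding the proposition, and your finiteness bound via $U(\tilde{m})\setminus U(G^*)\subseteq U(\tilde{m})\setminus U(B^*(G))\subseteq\prod_\ell[m_\ell,M_\ell-1]$ matches the paper's, only spelled out more fully. Your treatment of the second inclusion is actually more explicit than the paper's (which leaves $S_d(U(G^*))\subseteq U(\tilde{m})$ to the reader); the one cosmetic caveat is that your ``pick $j\neq\ell$'' step tacitly assumes $d\geq 2$, though the $d=1$ case is trivial.
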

Note that the $S_d(U(G^*))$ corresponds to the socle of the
monomial ideal whose generators correspond to $G^*$. The main
result in this section is the following.
\begin{theorem}
\label{thm:main-gen}
For a given antichain   
$G = \{\tilde{p}_1,\ldots,\tilde{p}_k\}\subseteq {\nats}^d$ we
have 
\[
G = S_u(D(S_d(U(G^*)))).
\]
\end{theorem}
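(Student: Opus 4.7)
The plan is to work throughout in the ambient poset $\ints^d$ and verify both containments $G \subseteq S_u(D(T))$ and $S_u(D(T)) \subseteq G$, where I abbreviate $T := S_d(U(G^*))$. For the forward containment, each $\tilde{p}_i \in G$ lies outside $D(T)$ because any $\tilde{t} \geq \tilde{p}_i$ would lie in $U(G) \subseteq U(G^*)$, contradicting $\tilde{t} \in T$. To prove minimality, I would take $\tilde{y} < \tilde{p}_i$, pick a coordinate $j_0$ with $y_{j_0} < p_{i, j_0}$, and promote $\tilde{y}$ to $\tilde{z} := \tilde{p}_i - \tilde{e}_{j_0}$. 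The antichain hypothesis together with $z_k < M_k$ for all $k$ keeps $\tilde{z}$ outside $U(G^*)$, while $\tilde{z} \geq \tilde{m}$ is automatic. The set $V := \{\tilde{w} \geq \tilde{z} : \tilde{w} \not\in U(G^*)\}$ is bounded coordinatewise by $M_j - 1$ (any $w_j \geq M_j$ combined with $\tilde{w} \geq \tilde{m}$ would push $\tilde{w}$ above the corresponding element of $B^*(G)$), so it is finite and has a maximum $\tilde{t}$. Each $\tilde{t} + \tilde{e}_j$ must lie in $U(G^*)$ (else it would enlarge $V$), so $\tilde{t} \in T$, and $\tilde{y} \leq \tilde{z} \leq \tilde{t}$ gives $\tilde{y} \in D(T)$.

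For the reverse containment, fix $\tilde{q} \in S_u(D(T))$ and, for each $j$, pick $\tilde{t}_j \in T$ with $\tilde{q} - \tilde{e}_j \leq \tilde{t}_j$. Comparing coordinates and using $\tilde{q} \not\leq \tilde{t}_j$ forces $q_j = t_{j, j} + 1$ and $q_i \leq t_{j, i}$ for $i \neq j$. Since Proposition~\ref{prp:G^*} gives $T \subseteq U(\tilde{m})$, I immediately obtain the lower bound $q_j \geq m_j + 1$ for every $j$. The technical core is the strict upper bound $q_{j_0} < M_{j_0}$: if $q_{j_0} \geq M_{j_0}$, then choosing any $j \neq j_0$ yields $t_{j, j_0} \geq q_{j_0} \geq M_{j_0}$, and combined with $\tilde{t}_j \in U(\tilde{m})$ this places $\tilde{t}_j$ above the element of $B^*(G)$ with $M_{j_0}$ in slot $j_0$, contradicting $\tilde{t}_j \not\in U(G^*)$.

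With both bounds in place, $\tilde{q} \in U(\tilde{m}) \setminus D(T)$, and cofiniteness of $U(G^*)$ in $U(\tilde{m})$ forces $\tilde{q} \in U(G^*)$ (otherwise $\tilde{q}$ would sit below some $\tilde{t} \in T$). Picking $\tilde{g} \in G^*$ with $\tilde{g} \leq \tilde{q}$, the strict inequality $q_j < M_j$ rules out $\tilde{g} \in B^*(G)$, so $\tilde{g} \in G$. Since $\tilde{g} \not\in D(T)$ by the forward containment, the minimality of $\tilde{q}$ in $\ints^d \setminus D(T)$ forces $\tilde{q} = \tilde{g}$, and thus $\tilde{q} \in G$.

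The hard part, I expect, is twofold. First, the ambient poset has to be $\ints^d$ rather than $\nats_0^d$: carried out naively in $\nats_0^d$ the formula produces spurious minimal elements along the coordinate hyperplanes, and only in $\ints^d$ do the $B^*(G)$-type points fail to be minimal, because their $m_i$-valued slots can be dragged arbitrarily low while staying outside $D(T)$. Second, within that setting, the real work is the upper bound $q_{j_0} < M_{j_0}$ in the reverse containment, which requires $d \geq 2$ and exploits the explicit shape of $B^*(G)$ constructed before Proposition~\ref{prp:G^*}.
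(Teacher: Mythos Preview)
Your proof is correct and follows the same two-containment architecture as the paper, with one notable variation in the reverse containment. To exclude $B^*(G)$ you establish an \emph{upper} bound $q_j < M_j$ (using that each element of $B^*(G)$ has an $M_j$-slot, and needing $d\geq 2$), then pick $\tilde g\in G^*$ below $\tilde q$ and appeal to minimality in $\ints^d\setminus D(T)$ to force $\tilde q=\tilde g$. The paper instead invokes Lemma~\ref{lmm:disjoint}: since $\tilde r-\tilde e_i\in D(T)$ implies $\tilde r-\tilde e_i\notin U(G^*)$ for every $i$, the element $\tilde r$ is already minimal in $U(G^*)$, hence $\tilde r\in G^*$; then the \emph{lower} bound $\tilde r\in U(\tilde m+\tilde 1)$ alone rules out $B^*(G)$, each of whose elements has an $m_i$-slot. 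The paper's route is a touch shorter and sidesteps the $d\geq 2$ caveat you flag, but both arguments are valid. One small slip: your finite set $V$ need not have a \emph{maximum}, only a maximal element above $\tilde z$; that is all your argument uses.
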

Before proving Theorem~\ref{thm:main-gen} we need the following.
\begin{lemma}
\label{lmm:disjoint}
Let $(P,\leq)$ be a poset. For an upset $U\subseteq P$ and
a downset $D\subseteq P$ we have 
\[
D(S_d(U))\cap U = U(S_u(D))\cap D = \emptyset.
\]
\end{lemma}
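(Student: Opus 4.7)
The proof will be a direct unwinding of the definitions of upset and downset, and the two claimed equalities are dual to each other, so I only need to genuinely argue one of them.

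For the first equality $D(S_d(U))\cap U=\emptyset$, I plan to argue by contradiction: suppose $x$ lies in this intersection. Then by definition of $D(\cdot)$ there exists some $a\in S_d(U)=\max(P\setminus U)$ with $x\leq a$. In particular $a\notin U$. But $x\in U$ and $a\geq x$, and since $U$ is an upset this forces $a\in U$, a contradiction.

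The second equality $U(S_u(D))\cap D=\emptyset$ follows by the order-theoretic dual of the same reasoning: if $x$ is in the intersection, pick $a\in S_u(D)=\min(P\setminus D)$ with $x\geq a$; then $a\notin D$, but $x\in D$ and $a\leq x$ together with $D$ being a downset force $a\in D$, again a contradiction. Alternatively one can formally invoke the fact that $(P,\geq)$ is also a poset in which upsets and downsets swap roles, so the second statement reduces verbatim to the first.

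There is no real obstacle here — the entire content of the lemma is that an upset cannot meet the ``down-shadow'' of its own complementary maxima, simply because any element of that shadow is bounded above by something not in the upset, and the upset property would propagate membership upward. The only care needed is to state the chain of implications cleanly so that the use of the upset/downset closure property is explicit.
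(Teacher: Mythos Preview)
Your proof is correct and follows essentially the same approach as the paper's own proof: pick an element in the putative intersection, use the definition of $D(\cdot)$ to find a bound in $S_d(U)$, and then invoke the upset property to derive a contradiction, with the second equality handled by duality. The paper's argument is identical in structure and brevity.
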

\begin{proof}
If $\tilde{x}\in D(S_u(U))$, 
then $\tilde{x}\leq\tilde{y}\in S_d(U)$.
If now $\tilde{x}\in U$ as well, then by definition of an upset we must
have $\tilde{y}\in U$, which is a contradiction since 
$U\cap S_d(U) = \emptyset$. In the same way we obtain 
$U(S_u(D))\cap D = \emptyset$.
\end{proof}
\newpage
\begin{conventions*}\ 
\begin{enumerate}[label=(\roman*)]
\item For a set $A\subseteq {\ints}^d$ and ${i}\in[d]$ 
let $A_{x_{i} = a_{i}} := \{\tilde{x}\in A : x_{i}=a_{i}\}$.
\item For a finite set $G\subseteq {\ints}^d$ 
let $\overline{U}(G) = U(\tilde{m})\setminus U(G^*)$,
where $\tilde{m}$ is as in (\ref{eqn:mi-Mi}), which then is a finite set.
\item For ${\cal{I}} = \{i_1,\ldots,i_h\}\subseteq [d]$
let $\pi_{\cal{I}} = \pi_{i_1,\ldots,i_h} : {\reals}^d \rightarrow {\reals}^h$
%% was $\pi_{\cal{I}} = \pi_{i_1,\ldots,i_h} :
%% {\reals}^d \rightarrow {\reals}^{d-h}$ ??
denote the natural projection onto coordinates $i_1,\ldots,i_h$.
In particular $\pi_{i}(\tilde{x}) = x_{i}$ and 
$\pi_{\hat{\imath}}(\tilde{x}) = (x_1,\ldots,x_{{i}-1},x_{{i}+1},\ldots,x_d)$.
\end{enumerate}
\end{conventions*}
\begin{proof}[Proof of Theorem~\ref{thm:main-gen}]
Let $\tilde{p} = (p_1,\ldots,p_d) \in G$.
Since $U(G^*)$ is cofinite, then for each ${i}\in[d]$
the set $\overline{U}(G)_{x_{i} = p_{i}-1}$ is also a finite
poset. Also, it is nonempty since
$\tilde{p}-\tilde{e}_i = (p_1,\ldots,p_{{i}-1},p_{i}-1,p_{{i}+1},\ldots,p_d)\in 
\overline{U}(G)_{x_{i} = p_{i}-1}$ as $G^*$ is an antichain. 
Hence, there is a maximal
element $\tilde{q}\in \overline{U}(G)_{x_{i} = p_{i}-1}$ with
$\tilde{p}-\tilde{e}_{i}\leq\tilde{q}$. Since 
$\tilde{q} + \tilde{e}_{i} \geq \tilde{p}$, then
$\tilde{q} + \tilde{e}_{i}\in U(\tilde{p})\subseteq U(G)\subseteq U(G^*)$.
Also, since $\tilde{q}$ is maximal in $\overline{U}(G)_{x_{i} = p_{i}-1}$,
we have $\tilde{q}+\tilde{e}_{\ell}\in U(G^*)_{x_{i} = p_{i}-1}\subseteq U(G^*)$
for all ${\ell}\neq {i}$, and so $\tilde{q}\in S_d(U(G^*))$.
So, for each ${i}\in[d]$ we have 
$\tilde{p} - \tilde{e}_{i} \in D(S_d(U(G^*)))$. Since 
$\tilde{p}\in G\subseteq U(G^*)$, then by Lemma~\ref{lmm:disjoint}
we have $\tilde{p}\not\in D(S_d(U(G^*)))$, and therefore
$\tilde{p}\in S_u(D(S_d(U(G^*))))$. This proves that
$G\subseteq S_u(D(S_d(U(G^*))))$.

For the other direction, we first verify that
\begin{equation}
\label{eqn:m+1}
S_u(D(S_d(U(G^*))))\subseteq U(\tilde{m}+\tilde{1}),
\end{equation}
where $\tilde{1} = (1,\ldots,1)\in {\ints}^d$.
First note that by Proposition~\ref{prp:G^*} we have 
$S_d(U(G^*))\subseteq U(\tilde{m})\setminus U(G^*)\subseteq U(\tilde{m})$.

If $\tilde{r}\in S_u(D(S_d(U(G^*))))$, then 
$\tilde{r}-\tilde{e}_{i}\in D(S_d(U(G^*)))$ for each ${i}$, 
and so $\tilde{r}-\tilde{e}_{i}\leq \tilde{q}$ for some 
$\tilde{q}\in S_d(U(G^*))\subseteq U(\tilde{m})$ for each ${i}$.
Consider a fixed ${i}$. Since $\tilde{r}\not\leq \tilde{q}$ 
and $\tilde{r}-\tilde{e}_{i}\leq \tilde{q}$, we must have
$r_{i}-1=q_{i}$. Since $\tilde{q}\in S_d(U(G^*))\subseteq U(\tilde{m})$,
we have $q_{i}\geq m_{i}$ and therefore 
$r_{i} = q_{i}+1 \geq m_{i} +1$. Since this holds
for each ${i}$, we have thus (\ref{eqn:m+1}).

For $\tilde{r}\in S_u(D(S_d(U(G^*))))$ we have for each ${i}$ that
$\tilde{r}-\tilde{e}_{i}\in D(S_d(U(G^*)))$, 
and hence, by Lemma~\ref{lmm:disjoint}, 
$\tilde{r}-\tilde{e}_{i}\not\in U(G^*)$ for each ${i}$.
If now $\tilde{r}\in U(G^*)$, then $\tilde{r}$ is a minimal 
element of $U(G^*)$ and hence $\tilde{r}\in G^*$. 
Hence
\[
\tilde{r} \in G^*\cap S_u(D(S_d(U(G^*))))\subseteq 
G^* \cap U(\tilde{m}+\tilde{1}) = G.
\]
Therefore it suffices to show that $S_u(D(S_d(U(G^*))))\subseteq U(G^*)$.

Assume $\tilde{r}\in S_u(D(S_d(U(G^*))))\setminus U(G^*)$.
We then have 
$\tilde{r}\in U(\tilde{m}+\tilde{1})\setminus U(G^*)\subseteq 
U(\tilde{m})\setminus U(G^*)$.
By Proposition~\ref{prp:G^*} $U(G^*)$ is cofinite, and so
there is a maximal element 
$\tilde{q}\in U(\tilde{m})\setminus U(G^*)$ with $\tilde{r}\leq \tilde{q}$.
In this case we have, by the definition of $S_d$, that
$\tilde{r}\leq\tilde{q}\in S_d(U(G^*))$
and hence $\tilde{r}\in D(S_d(U(G^*)))$ contradicting
that $\tilde{r}\in S_u(D(S_d(U(G^*))))$. This proves that 
$S_u(D(S_d(U(G^*))))\subseteq U(G^*)$, and hence, by previous paragraph, 
$S_u(D(S_d(U(G^*))))\subseteq G$.
\end{proof}
\begin{rmk*} The values $m_1,\ldots,m_d$ and $M_1,\ldots,M_d$
from (\ref{eqn:mi-Mi}), used to define $G^*$,
do not play a major role, except for merely being small and respectively 
large enough. In fact, if 
$\tilde{a}\leq \tilde{m}$ and $\tilde{b}\geq \tilde{M}$ and  
\begin{equation}
\label{eqn:B^*(a,b)}
B^*(\tilde{a},\tilde{b}) = \{(b_1,a_2,a_3\ldots,a_d), (a_1,b_2,a_3,\ldots,a_d),
(a_1,a_2,b_3,\ldots,a_d),\ldots,(a_1,a_2,a_3,\ldots,b_d)\},
\end{equation}
then we can define 
$G^*(\tilde{a},\tilde{b}) = G \cup B^*(\tilde{a},\tilde{b})$,
and we then, as in Proposition~\ref{prp:G^*}, obtain that  
\[
S_d(U(G))\subseteq S_d(U(G^*(\tilde{a},\tilde{b})))
\subseteq U(\tilde{a})\setminus U(G^*(\tilde{a},\tilde{b})),
\]
which is a finite set, and hence $U(G^*(\tilde{a},\tilde{b}))$ is cofinite,
and the proof of Theorem~\ref{thm:main-gen} works verbatim for
the following slight generalization.
\end{rmk*}

\begin{theorem}
\label{thm:main-gen-ab}
For a given antichain   
$G = \{\tilde{p}_1,\ldots,\tilde{p}_k\}\subseteq {\nats}^d$ and
for $\tilde{a}\leq \tilde{m}$ and $\tilde{b}\geq \tilde{M}$
where $\tilde{m}$ and $\tilde{M}$ are as in (\ref{eqn:mi-Mi}), 
we have
\[
G = S_u(D(S_d(U(G^*(\tilde{a},\tilde{b}))))).
\]
\end{theorem}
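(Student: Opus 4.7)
The plan is to show that the proof of Theorem~\ref{thm:main-gen} adapts verbatim with $\tilde{m}$ replaced by $\tilde{a}$ and $\tilde{M}$ replaced by $\tilde{b}$, since the hypotheses $\tilde{a}\leq\tilde{m}$ and $\tilde{b}\geq\tilde{M}$ preserve exactly the structural facts used in the original argument. The preliminary input, supplied by the remark immediately preceding the statement, is that $U(G^*(\tilde{a},\tilde{b}))$ is cofinite inside $U(\tilde{a})$; in particular $S_d(U(G^*(\tilde{a},\tilde{b})))\subseteq U(\tilde{a})\setminus U(G^*(\tilde{a},\tilde{b}))$ is finite. Moreover, because $a_j\leq m_j<p_{i\/j}$ and $b_j\geq M_j>p_{i\/j}$ for every generator coordinate, each element of $B^*(\tilde{a},\tilde{b})$ has some coordinate strictly below and some strictly above those of every $\tilde{p}_i\in G$, so $G^*(\tilde{a},\tilde{b})$ remains an antichain.

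For the forward inclusion, fix $\tilde{p}\in G$ and $i\in[d]$. The slice of $U(\tilde{a})\setminus U(G^*(\tilde{a},\tilde{b}))$ at $x_i=p_i-1$ is finite by cofiniteness and contains $\tilde{p}-\tilde{e}_i$ (in $U(\tilde{a})$ since the remaining coordinates of $\tilde{p}$ exceed the corresponding $a_j$, and outside $U(G^*(\tilde{a},\tilde{b}))$ by the antichain property). Pick a maximal element $\tilde{q}$ of this slice above $\tilde{p}-\tilde{e}_i$. Exactly as in the original proof, $\tilde{q}+\tilde{e}_i\geq\tilde{p}\in U(G)\subseteq U(G^*(\tilde{a},\tilde{b}))$, while maximality within the slice places $\tilde{q}+\tilde{e}_\ell$ in $U(G^*(\tilde{a},\tilde{b}))$ for every $\ell\neq i$. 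Hence $\tilde{q}\in S_d(U(G^*(\tilde{a},\tilde{b})))$ and $\tilde{p}-\tilde{e}_i\in D(S_d(U(G^*(\tilde{a},\tilde{b}))))$, and Lemma~\ref{lmm:disjoint} excludes $\tilde{p}$ itself from that downset, giving $\tilde{p}\in S_u(D(S_d(U(G^*(\tilde{a},\tilde{b})))))$.

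For the reverse inclusion I would mirror the three-step structure of the original argument. First, tracking coordinates yields $S_u(D(S_d(U(G^*(\tilde{a},\tilde{b})))))\subseteq U(\tilde{a}+\tilde{1})$: for any $\tilde{r}$ in the set and each $i$, some $\tilde{q}\in S_d(U(G^*(\tilde{a},\tilde{b})))\subseteq U(\tilde{a})$ satisfies $\tilde{r}-\tilde{e}_i\leq\tilde{q}$ while $\tilde{r}\not\leq\tilde{q}$, forcing $r_i=q_i+1\geq a_i+1$. Second, if $\tilde{r}\notin U(G^*(\tilde{a},\tilde{b}))$, cofiniteness supplies a maximal element $\tilde{q}$ of $U(\tilde{a})\setminus U(G^*(\tilde{a},\tilde{b}))$ above $\tilde{r}$; this puts $\tilde{q}\in S_d(U(G^*(\tilde{a},\tilde{b})))$ and $\tilde{r}\in D(S_d(U(G^*(\tilde{a},\tilde{b}))))$, contradicting $\tilde{r}\in S_u(\cdot)$ via Lemma~\ref{lmm:disjoint}; so $\tilde{r}\in U(G^*(\tilde{a},\tilde{b}))$. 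Third, Lemma~\ref{lmm:disjoint} forces $\tilde{r}-\tilde{e}_i\notin U(G^*(\tilde{a},\tilde{b}))$ for every $i$, so $\tilde{r}$ is minimal in $U(G^*(\tilde{a},\tilde{b}))$ and thus lies in $G^*(\tilde{a},\tilde{b})$. The only genuine new check beyond renaming is $G^*(\tilde{a},\tilde{b})\cap U(\tilde{a}+\tilde{1})=G$: each element of $B^*(\tilde{a},\tilde{b})$ has all but one coordinate equal to some $a_j$ and so fails to satisfy $\geq a_j+1$, while $G\subseteq U(\tilde{a}+\tilde{1})$ because $a_j+1\leq m_j+1\leq p_{i\/j}$. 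I anticipate no real obstacle; the argument is essentially a rebinding of bounds with this intersection verification.
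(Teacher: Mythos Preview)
Your proposal is correct and follows exactly the paper's approach: the paper's ``proof'' of Theorem~\ref{thm:main-gen-ab} is simply the remark that the argument for Theorem~\ref{thm:main-gen} works verbatim once one replaces $\tilde{m},\tilde{M}$ by $\tilde{a},\tilde{b}$, and you have carefully carried out that verification, including the one check that actually requires inspection, namely $G^*(\tilde{a},\tilde{b})\cap U(\tilde{a}+\tilde{1})=G$.
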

This will be used in the next Section~\ref{sec:up-down}.

Note that Theorem~\ref{thm:main-gen} states that one can retrieve
the antichain, or the generating points, $G$ from $S_d(U(G^*))$ alone. 
However, this does not mean that $S_d(U(G^*))$ can be arbitrary;
the set is derived from the (apriori unknown) set $G$. 

In the next section we use poset duality of ${\ints}^d$ to
obtain some related results from Theorem~\ref{thm:main-gen}
where we start with an arbitrary antichain $Q$ and show how it 
corresponds to the socle of a certain monomial ideal.

\section{Up-down duality}
\label{sec:up-down}

As in the derivation of Observation~\ref{obs:translation},
for any fixed point $\tilde{c}\in {\ints}^d$, the map
$\rho_{\tilde{c}} : {\ints}^d \rightarrow {\ints}^d$ given by 
$\rho_{\tilde{c}}(\tilde{x}) = \tilde{c} - \tilde{x}$ is a 
reverse-order preserving rotation, and hence an 
anti-automorphism of the poset $({\ints}^d, \leq)$. 
In particular, for $N\subseteq {\ints}^d$ we have that $N$
is an antichain iff $\rho_{\tilde{c}}(N)$ is an antichain.
Clearly $\rho_{\tilde{c}}$ is its own inverse, and so 
we have the following.
\begin{observation}
\label{obs:rotation}
For any $\tilde{c}\in {\ints}^d$ we have $\rho_{\tilde{c}}^2 = I_{{\ints}^d}$,
the identify map of ${\ints}^d$.
For any $A\subseteq {\ints}^d$ and $\tilde{c}\in {\ints}^d$ we have
\[
\rho_{\tilde{c}}(U(A)) = D(\rho_{\tilde{c}}(A)), \ \ 
\rho_{\tilde{c}}(D(A)) = U(\rho_{\tilde{c}}(A)).
\]
Further, for any upset $U\subseteq {\ints}^d$ and downset 
$D\subseteq {\ints}^d$ then $\rho_{\tilde{c}}(U)$ is a downset, 
$\rho_{\tilde{c}}(D)$ is an upset and 
\[
\rho_{\tilde{c}}(S_d(U)) = S_u(\rho_{\tilde{c}}(U)), \ \ 
\rho_{\tilde{c}}(S_u(D)) = S_d(\rho_{\tilde{c}}(D)).
\]
\end{observation}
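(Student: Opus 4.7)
My plan is to prove the four assertions of the observation in a cascade, each building on the previous. Everything follows once I establish that $\rho_{\tilde{c}}$ is an involution and an order-reversing bijection of ${\ints}^d$; the remaining claims reduce to definition chasing.

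First, I would verify $\rho_{\tilde{c}}^2 = I_{{\ints}^d}$ by direct computation: $\rho_{\tilde{c}}(\rho_{\tilde{c}}(\tilde{x})) = \tilde{c} - (\tilde{c} - \tilde{x}) = \tilde{x}$. The order-reversing property follows from componentwise arithmetic in ${\ints}^d$, namely $\tilde{x} \leq \tilde{y}$ if and only if $\tilde{c} - \tilde{y} \leq \tilde{c} - \tilde{x}$. In particular, $\rho_{\tilde{c}}$ is a bijection that reverses strict comparisons as well.

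Next, for the identity $\rho_{\tilde{c}}(U(A)) = D(\rho_{\tilde{c}}(A))$ I would chase definitions: $\tilde{x} \in \rho_{\tilde{c}}(U(A))$ iff $\rho_{\tilde{c}}(\tilde{x}) \in U(A)$ (by the involution property) iff $\rho_{\tilde{c}}(\tilde{x}) \geq \tilde{a}$ for some $\tilde{a} \in A$ iff $\tilde{x} \leq \rho_{\tilde{c}}(\tilde{a})$ for some $\tilde{a} \in A$ (by order-reversal) iff $\tilde{x} \in D(\rho_{\tilde{c}}(A))$. The companion identity $\rho_{\tilde{c}}(D(A)) = U(\rho_{\tilde{c}}(A))$ follows either by the same argument or by applying $\rho_{\tilde{c}}$ to both sides of the first identity (with $A$ replaced by $\rho_{\tilde{c}}(A)$) and invoking $\rho_{\tilde{c}}^2 = I$.

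For the remaining claims, observe that any upset $U$ satisfies $U = U(U)$ tautologically, so $\rho_{\tilde{c}}(U) = D(\rho_{\tilde{c}}(U))$ is a downset, and dually for downsets. For the $S_d/S_u$ identities, bijectivity of $\rho_{\tilde{c}}$ on ${\ints}^d$ gives $\rho_{\tilde{c}}({\ints}^d \setminus U) = {\ints}^d \setminus \rho_{\tilde{c}}(U)$, and because $\rho_{\tilde{c}}$ reverses strict order, it bijects the maximal elements of any subset onto the minimal elements of its image. Hence $\rho_{\tilde{c}}(S_d(U)) = \rho_{\tilde{c}}(\max({\ints}^d \setminus U)) = \min({\ints}^d \setminus \rho_{\tilde{c}}(U)) = S_u(\rho_{\tilde{c}}(U))$, and the other identity follows symmetrically. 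I do not foresee any real obstacle; the only point deserving careful attention is the last max-to-min step, which rests precisely on the strict-order-reversal property established at the outset.
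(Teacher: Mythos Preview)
Your proposal is correct and matches the paper's approach: the paper states this as an observation without proof, simply noting beforehand that $\rho_{\tilde{c}}$ is order-reversing and its own inverse, which are exactly the two facts you establish first and from which everything else follows by the definition-chasing you describe.
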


Let $G = \{\tilde{p}_1,\ldots,\tilde{p}_k\}\subseteq {\nats}^d$
be an antichain,
and $G^* = G\cup B^*(G)$, where $B^*(G)$ is as in (\ref{eqn:B^*}).
If $\rho = \rho_{\tilde{m}+\tilde{M}}$ where $\tilde{m}$ and
$\tilde{M}$ are as in (\ref{eqn:mi-Mi}), then for any coordinate $i$
we have
\[
\min(\pi_i(\rho(G))) = m_i + M_i - (M_i-1) = m_i+1, \ \ 
\max(\pi_i(\rho(G))) = m_i + M_i - (m_i+1) = M_i-1,
\]
and so 
$G, \rho(G)\subseteq U(\tilde{m}+\tilde{1})\cap D(\tilde{M}-\tilde{1})$.

Suppose that $Q\subseteq {\nats}^d$ is such that 
$\min(\pi_i(Q)) = m_i+1$ and
$\max(\pi_i(Q)) = M_i-1$ for each $i$.
Since $\rho$ is its own inverse, there is a unique 
$G\subseteq U(\tilde{m}+\tilde{1})\cap D(\tilde{M}-\tilde{1})$ with
$Q = \rho(G)$ (namely $G = \rho(Q)$,) such that 
$\min(\pi_i(G)) = m_i+1$ and
$\max(\pi_i(G)) = M_i-1$ for each $i$, as well. Hence we have 
\begin{eqnarray*}
\rho(G^*) 
  & = & \rho(G \cup B^*(G)) \\
  & = & \rho(G)\cup\rho(B^*(G)) \\
  & = & Q \cup \{(m_1,M_2,M_3\ldots,M_d), (M_1,m_2,M_3,\ldots,M_d),
(M_1,M_2,m_3,\ldots,M_d),\\
  &   & \ldots,(M_1,M_2,M_3,\ldots,m_d)\}.
\end{eqnarray*}
Hence, if we define
\begin{eqnarray}
\label{eqn:B_*}
\lefteqn{B_*(Q) = } \nonumber \\
& & \{(m_1,M_2,M_3\ldots,M_d), (M_1,m_2,M_3,\ldots,M_d),
(M_1,M_2,m_3,\ldots,M_d), \nonumber \\
& & \ldots,(M_1,M_2,M_3,\ldots,m_d)\}, 
\end{eqnarray}
where for each $i$
$\min(\pi_i(Q)) = m_i+1$ and
$\max(\pi_i(Q)) = M_i-1$, and let $Q_* = Q \cup B_*(Q)$,
then $\rho(G^*) = Q_*$ and so by Theorem~\ref{thm:main-gen} 
and Observation~\ref{obs:rotation} we get
\begin{equation}
\label{eqn:Q}
Q = \rho(G) = \rho(S_u(D(S_d(U(G^*))))) 
  = S_d(U(S_u(D(\rho(G^*))))) = S_d(U(S_u(D(Q_*)))).
\end{equation}
Since $Q\subseteq 
U(\tilde{m}+\tilde{1})\cap D(\tilde{M}-\tilde{1})\subseteq {\nats}^d$
is a finite set, we have by (\ref{eqn:Q}) the following dual theorem of 
Theorem~\ref{thm:main-gen}.
\begin{theorem}
\label{thm:main-gen-dual}
For an antichain   
$Q = \{\tilde{p}_1,\ldots,\tilde{p}_k\}\subseteq {\nats}^d$ we
have 
\[
Q = S_d(U(S_u(D(Q_*)))),
\]
where $Q_* = Q \cup B_*(Q)$ and $B_*(Q)$ is as in (\ref{eqn:B_*}).
\end{theorem}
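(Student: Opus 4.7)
The plan is to reduce Theorem~\ref{thm:main-gen-dual} to Theorem~\ref{thm:main-gen} via the order-reversing involution $\rho := \rho_{\tilde{m}+\tilde{M}}$ of $({\ints}^d,\leq)$, where $\tilde{m}$ and $\tilde{M}$ are chosen so that $\min(\pi_i(Q)) = m_i+1$ and $\max(\pi_i(Q)) = M_i-1$ for every $i\in[d]$ (this is exactly the coordinate-wise setup from the paragraph preceding the theorem). The entire strategy is encoded in the chain of equalities displayed in~(\ref{eqn:Q}); my task is simply to set things up so that chain goes through.

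First I would define $G := \rho(Q)$. Because $\rho$ is an anti-automorphism of the poset and $\rho^2 = I_{{\ints}^d}$ by Observation~\ref{obs:rotation}, $G$ is again an antichain in ${\nats}^d$, and a direct coordinate-wise computation shows $\min(\pi_i(G)) = m_i+1$ and $\max(\pi_i(G)) = M_i-1$. Hence the quantities $m_i, M_i$ from (\ref{eqn:mi-Mi}) associated with $G$ agree with those associated with $Q$, so there is no ambiguity in writing $B^*(G)$ and $B_*(Q)$ with the same numbers.

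Next I would check the key identity $\rho(G^*) = Q_*$. Since $\rho$ is linear-affine and sends $(p_1,\ldots,p_d)\mapsto(m_1+M_1-p_1,\ldots,m_d+M_d-p_d)$, it sends the vertex $(m_1,\ldots,m_{i-1},M_i,m_{i+1},\ldots,m_d)$ of $B^*(G)$ to the vertex $(M_1,\ldots,M_{i-1},m_i,M_{i+1},\ldots,M_d)$ of $B_*(Q)$, whence $\rho(B^*(G)) = B_*(Q)$. Combined with $\rho(G) = Q$, this gives $\rho(G^*) = Q \cup B_*(Q) = Q_*$.

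Finally, Theorem~\ref{thm:main-gen} applied to $G$ yields $G = S_u(D(S_d(U(G^*))))$. Applying $\rho$ to both sides and then pushing $\rho$ inward step by step using the four swap rules of Observation~\ref{obs:rotation} (namely $\rho\circ U = D\circ\rho$, $\rho\circ D = U\circ\rho$, $\rho\circ S_d = S_u\circ\rho$, $\rho\circ S_u = S_d\circ\rho$) together with $\rho^2 = I$ transforms the right-hand side into $S_d(U(S_u(D(\rho(G^*))))) = S_d(U(S_u(D(Q_*))))$, while the left-hand side becomes $\rho(G) = Q$. This is the desired equality. The only real bookkeeping is the coordinate verification in the first and second steps; once $\rho(G^*) = Q_*$ is in hand, everything else is a formal consequence of the duality dictionary already proved in Observation~\ref{obs:rotation}.
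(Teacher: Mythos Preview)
Your proposal is correct and follows essentially the same route as the paper: you set $G=\rho(Q)$ with $\rho=\rho_{\tilde m+\tilde M}$, verify that the $m_i,M_i$ computed from $G$ coincide with those from $Q$ so that $\rho(B^*(G))=B_*(Q)$ and hence $\rho(G^*)=Q_*$, and then apply Theorem~\ref{thm:main-gen} together with the swap rules of Observation~\ref{obs:rotation} to obtain the chain~(\ref{eqn:Q}). This is exactly the paper's argument, with the minor cosmetic difference that you carry out the coordinate check on $G$ rather than on $\rho(G)$.
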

By the above Theorem~\ref{thm:main-gen-dual} we see that
given any antichain $Q\subseteq {\nats}^d$, then
the corresponding monomials are exactly the monomials in 
$\doc(I(Q))$, where $I(Q)$ is the monomial ideal generated
by the monomials that correspond to $S_u(D(Q_*))\subseteq {\nats}_0^d$.
This does give a positive answer to one of our motivating 
questions in Section~\ref{sec:intro}. Note, 
however, that this monomial ideal is not unique, as stated in 
the dual theorem of Theorem~\ref{thm:main-gen-ab-dual} here below.

Let $Q = \{\tilde{p}_1,\ldots,\tilde{p}_k\}\subseteq {\nats}^d$ 
be an antichain and $\tilde{a}\leq \tilde{m}$ and $\tilde{b}\geq \tilde{M}$
where $\tilde{m}$ and $\tilde{M}$ are as in (\ref{eqn:B_*}).
Further, let $\rho = \rho_{\tilde{a} + \tilde{b}}$ as defined in the beginning
of this section. If now $G = \rho(Q)$, then we have, as above,
that $\rho(G) = \rho^2(Q) = Q$ and for $G^*(\tilde{a},\tilde{b})$ as 
in the Remark right before Theorem~\ref{thm:main-gen-ab}, that
\begin{eqnarray*}
\rho(G^*(\tilde{a},\tilde{b}))
  & = & \rho(G \cup B^*(\tilde{a},\tilde{b})) \\
  & = & \rho(G)\cup\rho(B^*(\tilde{a},\tilde{b})) \\
  & = & Q \cup \{(a_1,b_2,b_3\ldots,b_d), (b_1,a_2,b_3,\ldots,b_d),
(b_1,b_2,a_3,\ldots,b_d),\ldots,(b_1,b_2,b_3,\ldots,a_d)\}.
\end{eqnarray*}
So, again, we can define
\begin{equation}
\label{eqn:B_*(a,b)}
B_*(\tilde{a},\tilde{b}) = \{(a_1,b_2,b_3\ldots,b_d), (b_1,a_2,b_3,\ldots,b_d),
(b_1,b_2,a_3,\ldots,b_d),\ldots,(b_1,b_2,b_3,\ldots,a_d)\}.
\end{equation}
If now 
\[
%\begin{equation}
%\label{eqn:Q_*-def}
Q_*(\tilde{a},\tilde{b}) := Q \cup B_*(\tilde{a},\tilde{b}),
%\end{equation}
\]
then $\rho(G^*(\tilde{a},\tilde{b})) = Q_*(\tilde{a},\tilde{b})$, and 
so by Theorem~\ref{thm:main-gen-ab} and Observation~\ref{obs:rotation} we get
\[
%\begin{equation}
%\label{eqn:Q(a,b)}
Q = \rho(G) = \rho(S_u(D(S_d(U(G^*(\tilde{a},\tilde{b}))))))
  = S_d(U(S_u(D(\rho(G^*(\tilde{a},\tilde{b})))))) 
  = S_d(U(S_u(D(Q_*(\tilde{a},\tilde{b}))))).
%\end{equation}
\]
Since this holds for any $\tilde{a}\leq \tilde{m}$ and 
$\tilde{b}\geq \tilde{M}$, we have a following dual theorem
of Theorem~\ref{thm:main-gen-ab}.
\begin{theorem}
\label{thm:main-gen-ab-dual}
For an antichain 
$Q = \{\tilde{p}_1,\ldots,\tilde{p}_k\}\subseteq {\nats}^d$ and
for $\tilde{a}\leq \tilde{m}$ and $\tilde{b}\geq \tilde{M}$
where $\tilde{m}$ and $\tilde{M}$ are as in (\ref{eqn:B_*}),
we have
\[
Q = S_d(U(S_u(D(Q_*(\tilde{a},\tilde{b}))))).
\]
where $Q_*(\tilde{a},\tilde{b}) = Q \cup B_*(\tilde{a},\tilde{b})$ 
and $B_*(\tilde{a},\tilde{b})$ is as in (\ref{eqn:B_*(a,b)}).
\end{theorem}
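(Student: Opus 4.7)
The plan is to derive Theorem~\ref{thm:main-gen-ab-dual} from its already-established upside cousin, Theorem~\ref{thm:main-gen-ab}, by exploiting the anti-automorphism $\rho = \rho_{\tilde{a}+\tilde{b}}$ of $({\ints}^d,\leq)$ introduced at the start of Section~\ref{sec:up-down}. The idea is that reflecting through the point $\tilde{a}+\tilde{b}$ swaps upsets with downsets and $S_d$ with $S_u$, so one theorem is obtained from the other by a single application of $\rho$.

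First I would set $G := \rho(Q)$. Because $\rho$ reverses order, $G$ is again an antichain, and a direct computation of coordinates (of the kind done explicitly for the special case $\rho_{\tilde{m}+\tilde{M}}$ earlier in the section) shows that if $\tilde{m}_G$ and $\tilde{M}_G$ denote the coordinatewise min/max of $G$ as in (\ref{eqn:mi-Mi}), then $\tilde{a}\leq\tilde{m}_G$ and $\tilde{b}\geq\tilde{M}_G$; this is exactly because $\tilde{a}\leq\tilde{m}\leq\tilde{p}_i\leq\tilde{M}\leq\tilde{b}$ for every $\tilde{p}_i\in Q$ and $\rho$ simply sends each coordinate $p_{i\/j}$ to $a_j+b_j-p_{i\/j}$. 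So Theorem~\ref{thm:main-gen-ab} applies to $G$ with the same pair $(\tilde{a},\tilde{b})$, yielding
\[
G \;=\; S_u(D(S_d(U(G^*(\tilde{a},\tilde{b}))))).
\]

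Next I would apply $\rho$ to both sides. Since $\rho^2$ is the identity (Observation~\ref{obs:rotation}), the left-hand side becomes $\rho(G)=Q$. For the right-hand side, I repeatedly use the two rules $\rho(U(\cdot))=D(\rho(\cdot))$ and $\rho(S_d(\cdot))=S_u(\rho(\cdot))$ (and their swapped versions), pushing $\rho$ through $S_u$, then $D$, then $S_d$, then $U$, each step flipping the operator to its dual. The innermost term becomes $\rho(G^*(\tilde{a},\tilde{b})) = \rho(G)\cup\rho(B^*(\tilde{a},\tilde{b}))$, and $\rho$ sends $B^*(\tilde{a},\tilde{b})$ from (\ref{eqn:B^*(a,b)}) to precisely $B_*(\tilde{a},\tilde{b})$ as defined in (\ref{eqn:B_*(a,b)}), because swapping $a_j\leftrightarrow b_j$ is exactly what $\rho$ does on those corner-type points. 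Therefore $\rho(G^*(\tilde{a},\tilde{b}))=Q\cup B_*(\tilde{a},\tilde{b}) = Q_*(\tilde{a},\tilde{b})$, and after the four dualizations the right-hand side reads $S_d(U(S_u(D(Q_*(\tilde{a},\tilde{b})))))$, completing the proof.

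The only nontrivial part of this plan is the bookkeeping: verifying that the bounds $\tilde{a}\leq\tilde{m}$, $\tilde{b}\geq\tilde{M}$ for $Q$ really do imply $\tilde{a}\leq\tilde{m}_G$, $\tilde{b}\geq\tilde{M}_G$ for $G=\rho(Q)$ so that Theorem~\ref{thm:main-gen-ab} legitimately applies, and confirming that $\rho(B^*(\tilde{a},\tilde{b}))=B_*(\tilde{a},\tilde{b})$. Both are coordinatewise calculations mirroring those carried out in the discussion leading to Theorem~\ref{thm:main-gen-dual}, so I expect no real obstacle—the substance of the argument is genuinely contained in Theorem~\ref{thm:main-gen-ab} together with Observation~\ref{obs:rotation}, and this proof is a clean dualization.
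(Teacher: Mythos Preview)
Your proposal is correct and follows essentially the same route as the paper: set $\rho=\rho_{\tilde a+\tilde b}$, let $G=\rho(Q)$, apply Theorem~\ref{thm:main-gen-ab} to $G$, and then push $\rho$ through the four operators via Observation~\ref{obs:rotation}, using $\rho(G^*(\tilde a,\tilde b))=Q_*(\tilde a,\tilde b)$. If anything, you are slightly more explicit than the paper in checking that the hypotheses $\tilde a\le\tilde m_G$, $\tilde b\ge\tilde M_G$ hold for $G$, which the paper leaves implicit in the general case (having done the analogous computation only for the special choice $\rho_{\tilde m+\tilde M}$).
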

\begin{rmks*}\ 
\begin{enumerate}[label=(\roman*)]
\item  Note that Theorem~\ref{thm:main-gen-ab-dual} 
is valid for {\em any} $\tilde{a},\tilde{b}\in {\ints}^d$ 
that satisfy $\tilde{a}\leq \tilde{m}$ and $\tilde{b}\geq \tilde{M}$.
This will be used in the next section.
\item Note that if $Q_*(\tilde{a},\tilde{b})$ is given
(without the prior knowledge of $B_*(\tilde{a},\tilde{b})$), then
we can retrieve $B_*(\tilde{a},\tilde{b})$ as in 
in (\ref{eqn:B_*(a,b)}) from $Q_*(\tilde{a},\tilde{b})$,
and hence the set $Q$, if
$\tilde{a}\leq \tilde{m}$ and $\tilde{b}\geq \tilde{M}$. 
\item Last but not least, note that by Observations~\ref{obs:translation}
and~\ref{obs:rotation}, all Theorems~\ref{thm:main-gen}, \ref{thm:main-gen-ab}, 
\ref{thm:main-gen-dual} and~\ref{thm:main-gen-ab-dual} hold for any
antichain $Q\subseteq {\ints}^d$, and not merely those
of ${\nats}^d$.
\end{enumerate}
\end{rmks*}

\begin{example*}
Consider the case $d=3$ and the set
$Q = \{ (2,2,3), (3,3,2)\}\subseteq {\nats}^3$.
We use the above Theorem~\ref{thm:main-gen-dual} to obtain
an upset $U(S_d(D(Q_*)))$, whose minimal points 
$S_u(D(Q_*))\subseteq {\nats}_0^3$ correspond to the
generators of a monomial ideal $I$ where $\overline{\soc(I)}$
is spanned by the image of $M = \{x^2y^2z^3,x^3y^3z^2\}$ in $R/I$, 
i.e.~the monomials corresponding to the set $Q$, as follows.

By (\ref{eqn:B_*}) we have here that
$B_*(Q) = \{(4,4,1), (4,1,4), (1,4,4)\}$ and hence
\[
Q_* = Q\cup B_*(Q) = \{(2,2,3), (3,3,2), (4,4,1), (4,1,4), (1,4,4)\},
\]
and so $S_u(D(Q_*)) = \{(2,2,4), (2,3,3), (2,4,2), (3,2,3), (4,2,2)\}$.
By Theorem~\ref{thm:main-gen-dual} we now have
\begin{eqnarray*}
\lefteqn{S_d(U(S_u(D(Q_*))))}  \\ 
& = & S_d(U(S_u(D((2,2,3), (3,3,2), (4,4,1), (4,1,4), (1,4,4))))) \\
& = & S_d(U((2,2,4), (2,3,3), (2,4,2), (3,2,3), (4,2,2))) \\
& = & \{ (2,2,3), (3,3,2)\} \\
& = & Q.
\end{eqnarray*}
Therefore, the monomial ideal 
$I_1 = (x^2y^2z^4, x^2y^3z^3, x^2y^4z^2, x^3y^2z^3, x^4y^2z^2)\subseteq K[x,y,z]$
has $\overline{\soc(I_1)}$ spanned by the image of
$M = \{x^2y^2z^3,x^3y^3z^2\}$ in $R/{I_1}$ as a $k$-vector space.

Since $\tilde{a} = (0,0,1)\leq (1,1,1) =\tilde{m}$ and
$\tilde{b} = (5,6,7)\geq (4,4,4) = \tilde{M}$ in ${\ints}^3$, 
we have for 
\[
Q_*(\tilde{a},\tilde{b}) = Q_*((0,0,1),(5,6,7)) =
\{(2,2,3), (3,3,2), (5,6,1), (5,0,7), (0,6,7)\},
\]
by Theorem~\ref{thm:main-gen-ab-dual} that
\begin{eqnarray*}
\lefteqn{S_d(U(S_u(D(Q_*((0,0,1),(5,6,7))))))}  \\ 
& = & S_d(U(S_u(D((2,2,3), (3,3,2), (5,6,1), (5,0,7), (0,6,7))))) \\
& = & S_d(U((1,1,4), (1,3,3), (1,4,2), (3,1,3), (4,1,2)))\\
& = & \{ (2,2,3), (3,3,2)\} \\
& = & Q.
\end{eqnarray*}
as well. Hence, the monomial ideal 
$I_2 = (xyz^4, x^1y^3z^3, xy^4z^2, x^3yz^3, x^4yz^2)\subseteq K[x,y,z]$
also has $\overline{\soc(I_2)}$ spanned by the image of
$M = \{x^2y^2z^3,x^3y^3z^2\}$ in $R/{I_2}$ as a $k$-vector space.
\end{example*}

\begin{rmks*}\ 
\begin{enumerate}[label=(\roman*)]
\item As with many formulae, when it comes utilizing 
them to compute specific values, the compact forms and shortness
is not always a guarantee for a fast evaluation. Conversely, 
a seemingly ugly expression can many times be much better to
use to obtain specific values in a fast and an efficient manner.
The computation of $S_u(D(Q_*))$, from an antichain $Q$ consisting 
of $k$ points from ${\nats}^d$ as in Theorem~\ref{thm:main-gen-dual},
can for each fixed $k$ be done in polynomial time
in $k$ alone. In fact, it can be done in $O(k^d)$-time,
although the exact or a tight upper bound of its complexity
is hard to come by. 
\item We have so far assumed $G,Q\subseteq {\nats}^d$,
only to guarantee that $G^*, Q_*\subseteq {\nats}_0^d$. However,
general $G,Q\subseteq {\nats}_0^d$ will imply 
$G^*, Q_*\subseteq\{-1,0,1,2,\ldots\}^d\subseteq {\ints}^d$ 
which does not correspond
to a set of monomials from $[x_1,\ldots,x_d]$ but rather 
monomials from $[x_1,x_1^{-1},\ldots,x_d,x_d^{-1}]$.
\end{enumerate}
\end{rmks*}

\section{Zero-dimensional monomial ideals}
\label{sec:zero-monomial}

For a field $K$, the ring $R = K[x_1,\ldots,x_d]$,
the maximal ideal $\m$ of $R$ and a monomial ideal $I$ of
$R$, the motivation for this section is the following question.
\begin{question}
\label{qst:retrieve}
Under what circumstances can we retrieve a unique monomial ideal $I$
from the monomials of $\soc(I) = \soc_{\m}(I)$ that are not in $I$?
\end{question}
We saw in the last example in the previous Section~\ref{sec:up-down}
that both the following ideals of $R = K[x,y,z]$
\begin{eqnarray*}
I_1 & = & (x^2y^2z^4, x^2y^3z^3, x^2y^4z^2, x^3y^2z^3, x^4y^2z^2) \mbox{ and } \\
I_2 & = & (xyz^4, x^1y^3z^3, xy^4z^2, x^3yz^3, x^4yz^2)
\end{eqnarray*}
have $\overline{\soc(I_1)}$ and resp.~$\overline{\soc(I_2)}$
spanned by the image of $M = \{x^2y^2z^3,x^3y^3z^2\}$
in $R/{I_1}$ and resp.~$R/{I_2}$ as a $k$-vector space. So
Question~\ref{qst:retrieve} has in general a negative answer.
In fact, for any monomial ideal $I$ it is easy to construct a rich
family of monomial ideals such that $\doc(I') = \doc(I)$ 
for any ideal $I'$ in that family.

If, however, we assume $\dim(I) = 0$ then, we will see shortly,
Question~\ref{qst:retrieve} has a positive answer.
Zero-dimensional monomial ideals of $R = K[x_1,\ldots,x_d]$ 
constitute an interesting yet fairly general class of monomial ideals 
for numerous reasons: the quotient ring $R/I$ is a local ring with
a unique prime and maximal ideal, $R/I$ is finite dimensional over
$k$, and their variety consist of a single point $\tilde{0}$, to name
a few. 

For a zero-dimensional ideal $I\neq\m$ of $R$ we note
that the image of $\soc_{\m}(I)$ in $R/I$ corresponds to an antichain
$Q$ of ${\nats}_0^d$. As mentioned in the remark after 
Theorem~\ref{thm:main-gen-ab-dual}, we then have for our 
$Q\subseteq {\nats}_0^d$ that 
$Q = S_d(U(S_u(D(Q_*(\tilde{a},\tilde{b})))))$ for any 
suitable $\tilde{a}, \tilde{b}\in {\ints}^d$. In particular, since we have 
$-\tilde{1} = (-1,\ldots,-1)\leq \tilde{m}$ from  
Theorem~\ref{thm:main-gen-ab-dual}, then for any $\tilde{b}\geq\tilde{M}$
we have $Q = S_d(U(S_u(D(Q_*(-\tilde{1},\tilde{b})))))$. We now
briefly argue that the upset 
$U(S_u(D(Q_*(-\tilde{1},\tilde{b}))))$ corresponds to a monomial ideal 
$I$ of $R$ of dimension zero: first we note that by definition 
(\ref{eqn:B_*(a,b)}) we have 
\begin{eqnarray*}
D(Q_*(-\tilde{1},\tilde{b})) & \supseteq & D(B_*(-\tilde{1},\tilde{b})) \\
 & = & D((-1,b_2,b_3\ldots,b_d), (b_1,-1,b_3,\ldots,b_d), 
\ldots,(b_1,b_2,b_3,\ldots,-1) 
\end{eqnarray*}
and hence 
$S_u(D(Q_*(-\tilde{1},\tilde{b})))\subseteq U(\tilde{0}) = {\nats}_0^d$,
and so the generators for the upset % given by 
$U(S_u(D(Q_*(-\tilde{1},\tilde{b}))))$ correspond to monomials of 
$R = K[x_1,\ldots,x_d]$. Secondly, since $\tilde{b}\geq\tilde{M}$
as in Theorem~\ref{thm:main-gen-ab-dual}, then for each $i\in [d]$
we have $b_i\tilde{e}_i = 
(0,\ldots,0,b_i,0,\ldots,0)\not\in D(Q_*(-\tilde{1},\tilde{b}))$
and $-\tilde{e}_i = (0,\ldots,0,-1,0,\ldots,0)\in D(Q_*(-\tilde{1},\tilde{b}))$,
and so there is a smallest $c_i\in{\nats}_0$, $c_i\leq b_i$ with 
$c_i\tilde{e}_i\not\in D(Q_*(-\tilde{1},\tilde{b}))$. Since 
$c_i\tilde{e}_i -\tilde{e}_j\in D(Q_*(-\tilde{1},\tilde{b}))$ for
each $j\neq i$ and 
$c_i\tilde{e}_i - \tilde{e}_i = (c_i-1)\tilde{e}_i
\in D(Q_*(-\tilde{1},\tilde{b}))$, we have that 
$c_i\tilde{e}_i\in S_u(D(Q_*(-\tilde{1},\tilde{b})))$ for each $i$, 
which means that $x_i^{c_i}$ is an element of the monomial ideal of $I$ that
corresponds to $U(S_u(D(Q_*(-\tilde{1},\tilde{b}))))$, showing that
$I$ is indeed of dimension zero.

To see that the zero-dimensional ideal $I$ is unique, it suffices
to show that the upset from above 
$U = U(S_u(D(Q_*(-\tilde{1},\tilde{b}))))$ is the unique
upset with $Q = S_d(U)$. Assume $I_1$ and $I_2$ are two zero-dimensional
monomial ideals with the same socle w.r.t.~the maximal ideal $\m$ of $R$. 
As each $I_i$ is zero-dimensional, it corresponds to an upset 
$U(G_i^*(\tilde{0},\tilde{b}_i))$ where 
$G_i^*(\tilde{0},\tilde{b}_i) = G_i\cup B^*(\tilde{0},\tilde{b}_i)$
is an antichain of ${\nats}_0^d$ as in (\ref{eqn:B^*(a,b)}). 
By our assumption we have 
$S_d(U(G_1^*(\tilde{0},\tilde{b}_1))) = Q 
= S_d(U(G_2^*(\tilde{0},\tilde{b}_2)))$ and hence by 
Theorem~\ref{thm:main-gen-ab} we then get
\[
G_1 
= S_u(D(S_d(U(G_1^*(\tilde{0},\tilde{b}_1)))))
= S_u(D(Q))
= S_u(D(S_d(U(G_1^*(\tilde{0},\tilde{b}_1)))))
= G_2.
\]
Also, directly from $S_d(U(G_1^*(\tilde{0},\tilde{b}_1))) = Q 
= S_d(U(G_2^*(\tilde{0},\tilde{b}_2)))$, we obtain for each 
$i\in[d]$ that 
\[
b_{1\/i} = \pi_i(\tilde{b}_1) = \max(\{\pi_i(\tilde{q}) : \tilde{q}\in Q\})
= \pi_i(\tilde{b}_2) = b_{1\/2},
\]
and hence $\tilde{b}_1 = \tilde{b}_2$. Therefore 
\[
G_1^*(\tilde{0},\tilde{b}_1) = G_1\cup B^*(\tilde{0},\tilde{b}_1)
= G_2\cup B^*(\tilde{0},\tilde{b}_2) = G_2^*(\tilde{0},\tilde{b}_2),
\]
and so 
$U(G_1^*(\tilde{0},\tilde{b}_1) = U(G_2^*(\tilde{0},\tilde{b}_2)$.
This means that the corresponding zero-dimensional monomial
ideals are equal $I_1 = I_2$. This yields a positive answer
to Question~\ref{qst:retrieve} for zero-dimensional monomial ideals,
as stated in the following proposition, in which the set $S$ of
monomials corresponds to our antichain $Q\subseteq {\nats}_0^d$ 
from above.
\begin{proposition}
\label{prp:retrieve}
For any % given
non-empty set $S$ of non-comparable monomials  of
$R = K[x_1,\ldots,x_d]$ there is a unique zero-dimensional ideal
$I$ of $R$ with $\doc(I) = S$.
\end{proposition}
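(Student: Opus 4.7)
The plan is to translate $S$ to an antichain $Q\subseteq {\nats}_0^d$ via the monoid--poset isomorphism of Section~\ref{sec:posets}, and then handle existence (using Theorem~\ref{thm:main-gen-ab-dual}) and uniqueness (by a direct antichain-downset argument) separately. The main obstacle is in the existence step, specifically the verification that the constructed ideal is zero-dimensional; uniqueness, by contrast, reduces almost immediately to the observation that a finite downset is recovered from its antichain of maximal elements.

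For existence, I would apply Theorem~\ref{thm:main-gen-ab-dual} with $\tilde{a}=-\tilde{1}$ and $\tilde{b}=\tilde{M}$, letting $I$ be the monomial ideal whose minimal generators correspond to $S_u(D(Q_*(-\tilde{1},\tilde{M})))$. Two checks are required: that the exponent vectors lie in ${\nats}_0^d$ (so they correspond to honest monomials), and that $I$ is zero-dimensional. The first follows because each $-\tilde{e}_i$ lies in $D(B_*(-\tilde{1},\tilde{M}))$, which forces $S_u(D(Q_*(-\tilde{1},\tilde{M})))\subseteq U(\tilde{0})$. For zero-dimensionality I would verify $M_i\tilde{e}_i\in S_u(D(Q_*(-\tilde{1},\tilde{M})))$ for each $i$ by a coordinate-by-coordinate check: no element of $Q_*(-\tilde{1},\tilde{M})$ dominates $M_i\tilde{e}_i$ (each $\tilde{q}\in Q$ has $\pi_i(\tilde{q})<M_i$, and each face-vertex of $B_*(-\tilde{1},\tilde{M})$ is excluded because either its $i$-th coordinate is $-1<M_i$ or some other coordinate is $-1<0$), yet $(M_i-1)\tilde{e}_i$ is dominated by the $\tilde{q}\in Q$ realizing the max in coordinate $i$, and each $M_i\tilde{e}_i-\tilde{e}_j$ with $j\neq i$ is dominated by the face-vertex of $B_*(-\tilde{1},\tilde{M})$ carrying $-1$ in position $j$. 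Hence $x_i^{M_i}\in I$ for every $i$, so $I$ is zero-dimensional, and $\doc(I)=S$ follows directly from Theorem~\ref{thm:main-gen-ab-dual}.

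For uniqueness, let $I'$ be any zero-dimensional monomial ideal of $R$ with $\doc(I')=S$, and let $U'\subseteq {\nats}_0^d$ be the corresponding exponent upset. Zero-dimensionality of $I'$ is equivalent to the finiteness of ${\nats}_0^d\setminus U'$, while $\doc(I')=S$ translates to the statement $\max({\nats}_0^d\setminus U')=Q$. Since ${\nats}_0^d\setminus U'$ is a finite downset containing $Q$, every one of its elements sits below some maximal element, which must lie in $Q$ by antichain maximality. Therefore ${\nats}_0^d\setminus U'=D(Q)\cap {\nats}_0^d$, uniquely determined by $Q$, and so $U'$, and hence $I'$, is uniquely determined.
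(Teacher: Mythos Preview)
Your existence argument is essentially the same as the paper's: both apply Theorem~\ref{thm:main-gen-ab-dual} with $\tilde{a}=-\tilde{1}$, verify $S_u(D(Q_*(-\tilde{1},\tilde{b})))\subseteq U(\tilde{0})$ via the face vertices of $B_*(-\tilde{1},\tilde{b})$, and then check that a pure power of each variable belongs to the resulting ideal. You are slightly more explicit in pinning down the exponent as $M_i$; the paper argues instead that some minimal $c_i\leq b_i$ with $c_i\tilde{e}_i\notin D(Q_*(-\tilde{1},\tilde{b}))$ does the job.

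Your uniqueness argument, however, takes a genuinely different and more elementary route. The paper writes each candidate zero-dimensional ideal as $U(G_i^*(\tilde{0},\tilde{b}_i))$ and then invokes Theorem~\ref{thm:main-gen-ab} to conclude $G_1=S_u(D(Q))=G_2$, separately arguing $\tilde{b}_1=\tilde{b}_2$ from the coordinate maxima of $Q$. You bypass all of this by observing directly that $\nats_0^d\setminus U'$ is a finite downset whose set of maximal elements equals $Q$, hence is forced to equal $D(Q)\cap\nats_0^d$. This is cleaner and avoids the detour through the $G^*(\tilde{0},\tilde{b})$ framework; the paper's approach, on the other hand, exhibits the generators of $I$ explicitly as $S_u(D(Q))$ together with the pure powers, which feeds more directly into the structural analysis of Sections~\ref{sec:zero-monomial} and~\ref{sec:kgeq3}.
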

For the remainder of this section we discuss in further
detail the cases when the set $S$ of monomials in 
Proposition~\ref{prp:retrieve}, that corresponds to an 
antichain $Q\subseteq {\nats}_0^d$, has cardinality $1$ and $2$,
that is $|S| = |Q| \in \{1,2\}$.

\subsection*{The case when $Q$ is a singleton}
An interesting case of Proposition~\ref{prp:retrieve} this is when $|Q|=1$, 
so the antichain $Q$ of ${\nats}_0^d$
consists of just one point, say $Q = \{\tilde{p}\}$.
As before, the unique zero-dimensional monomial ideal 
corresponds to the upset $U = U(S_u(D(Q_*(-\tilde{1},\tilde{b}))))$ 
where $Q = S_d(U)$. If now $\tilde{p} = (p_1,\ldots,p_d)\in {\nats}_0^d$,
we note that the downset $D(Q_*(-\tilde{1},\tilde{b}))$ is generated
by an antichain consisting of $d+1$ elements
\[
Q_*(-\tilde{1},\tilde{b}) = \{\tilde{p}, (-1,b_2,b_3\ldots,b_d), 
(b_1,-1,b_3,\ldots,b_d), \ldots,(b_1,b_2,b_3,\ldots,-1)\}.
\]
Since each element $\tilde{r}\in S_u(D(Q_*(-\tilde{1},\tilde{b})))$ is,
by definition of $S_u(D)$, uniquely determined by $d$ distinct
elements of $Q_*(-\tilde{1},\tilde{b})$ (by the conditions
(i) $\tilde{r}\not\in Q_*(-\tilde{1},\tilde{b})$ and (ii) 
$\tilde{r}-\tilde{e}_i\in Q_*(-\tilde{1},\tilde{b})$ for each 
$i\in[d]$), we have that $S_u(D(Q_*(-\tilde{1},\tilde{b})))$
is a subset of the possible $\binom{d+1}{d} = d+1$ elements 
$\{\tilde{0},(p_1+1)\tilde{e}_1,\ldots,(p_d+1)\tilde{e}_d\}$,
and hence, as it is an antichain, we obtain
\[
S_u(D(Q_*(-\tilde{1},\tilde{b}))) = 
\{(p_1+1)\tilde{e}_1,\ldots,(p_d+1)\tilde{e}_d\}\},
\]
which means that the upset $U = U(S_u(D(Q_*(-\tilde{1},\tilde{b}))))$ 
corresponds to a monomial ideal $I$ of the form
$I = (x_1^{p_1+1},\ldots,x_d^{p_d+1})$.

That $|S| = 1$ means that $\doc(I) = S$ 
contains the unique generator of the ideal $\overline{\soc_{\m}(I)}$
of $R/I$. This means that $R/I$ is a zero dimensional local Gorenstein 
ring~\cite[Prop.~21.5]{Eisenbud}. Clearly, if our monomial ideal
$I$ is generated by pure powers of the variables $x_1,\ldots,x_d$,
then $R/I$ is a zero dimensional local Gorenstein ring with
a unique monomial in $\doc(I)$. Hence, we obtain as a corollary
the following description of Artinian Gorenstein rings that 
are defined by monomials~\cite{Beintema}, see also~\cite{Wol16}. 
\begin{corollary}
\label{cor:mon-Gorenstein}
Let $R = K[x_1,\ldots,x_d]$ and $I$ be a monomial ideal of $R$. 
Then $R/I$ is a zero dimensional local Gorenstein ring
if, and only if, $I = (x_1^{p_1+1},\ldots,x_d^{p_d+1})$ for
some $\tilde{p}\in {\nats}_0^d$, in which case 
$\doc(I) = \{\tilde{x}^{\tilde{p}} = x_1^{p_1}\cdots x_d^{p_d}\}$.
\end{corollary}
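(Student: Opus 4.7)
The plan is to combine the standard Gorenstein criterion, that a zero-dimensional local ring $R/I$ is Gorenstein if and only if $\dim_K \soc(R/I) = 1$ (the reference \cite[Prop.~21.5]{Eisenbud} cited in the statement), with the singleton-antichain analysis carried out in the paragraph immediately preceding the corollary. The bridge between the two is the remark from Section~\ref{sec:posets} that, for a monomial ideal $I$, the images of the monomials in $\doc(I)$ form a $K$-basis of $\soc(R/I)$, so the Gorenstein socle-dimension condition translates to $|\doc(I)| = 1$.

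For the forward implication, assume $R/I$ is a zero-dimensional local Gorenstein ring. Then $|\doc(I)| = 1$, so under the correspondence between monomials and lattice points the antichain $Q \subseteq \nats_0^d$ associated with $\doc(I)$ is a singleton $Q = \{\tilde{p}\}$ with $\tilde{p} = (p_1,\ldots,p_d)$. The computation performed just above the corollary (treating the case $|Q| = 1$) then identifies the upset $U = U(S_u(D(Q_*(-\tilde{1},\tilde{b}))))$ as the one generated by $\{(p_1+1)\tilde{e}_1,\ldots,(p_d+1)\tilde{e}_d\}$, i.e.~$I = (x_1^{p_1+1},\ldots,x_d^{p_d+1})$, with uniqueness furnished by Proposition~\ref{prp:retrieve}.

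For the reverse implication, assume $I = (x_1^{p_1+1},\ldots,x_d^{p_d+1})$ and verify directly that $\doc(I) = \{x_1^{p_1}\cdots x_d^{p_d}\}$. The monomial $\tilde{x}^{\tilde{p}}$ is not in $I$ (its $x_i$-degree is $p_i < p_i+1$ for each $i$), while $x_i \tilde{x}^{\tilde{p}} \in I$ for every $i$, so $\tilde{x}^{\tilde{p}} \in \doc(I)$. Conversely, any $\tilde{x}^{\tilde{q}} \in \doc(I)$ must satisfy $q_i \leq p_i$ for all $i$ (otherwise it lies in $I$), and the requirement $x_i \tilde{x}^{\tilde{q}} \in I$ forces $q_i = p_i$ in every coordinate. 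Hence $|\doc(I)| = 1$, and $R/I$ is Gorenstein by the socle-dimension criterion. The only step requiring a moment of care is checking that the singleton case of the analysis in Section~\ref{sec:zero-monomial} needs no separate treatment beyond what is already written; the argument there only uses that $Q$ is a non-empty antichain, so the corollary is essentially a restatement of the $|Q|=1$ computation, and I expect no real obstacle.
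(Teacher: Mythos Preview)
Your proposal is correct and follows essentially the same route as the paper: use the Gorenstein socle-dimension criterion to reduce to $|\doc(I)|=1$, invoke the singleton computation and Proposition~\ref{prp:retrieve} for the forward direction, and verify the converse directly. Your reverse direction is in fact slightly more detailed than the paper's ``clearly'', but the argument is the same.
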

For a general $Q\subseteq {\ints}^d$ we have, as mentioned
here above, that every element of $S_u(D(Q))$ is uniquely determined by
$d$ distinct elements of $Q$. More specifically we have the following.
\begin{claim}
\label{clm:d-elts}
For an antichain $G\subseteq {\ints}^d$ and $\tilde{r}\in S_d(U(G))$
there are distinct $\tilde{p}_1,\ldots,\tilde{p}_d\in G$ such
that (i) $\tilde{r}\in S_d(U(\tilde{p}_1,\ldots,\tilde{p}_d))$
and (ii) $\tilde{r} = (p_{1\/1}+1,\ldots,p_{d\/d}+1)$.

Dually, for an antichain $Q\subseteq {\ints}^d$ and $\tilde{r}\in S_u(D(Q))$
there are distinct $\tilde{p}_1,\ldots,\tilde{p}_d\in Q$ such
that (i) $\tilde{r}\in S_d(U(\tilde{p}_1,\ldots,\tilde{p}_d))$
and (ii) $\tilde{r} = (p_{1\/1}-1,\ldots,p_{d\/d}-1)$.
\end{claim}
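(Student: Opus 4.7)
The plan is to read off the witnesses $\tilde{p}_i$ directly from the definition of $S_d(U(G))$. Since $\tilde{r}\in S_d(U(G))$ means $\tilde{r}\notin U(G)$ while $\tilde{r}+\tilde{e}_i\in U(G)$ for every $i\in[d]$, for each $i$ I would choose $\tilde{p}_i\in G$ witnessing the second condition, i.e.\ with $\tilde{p}_i\leq\tilde{r}+\tilde{e}_i$. This immediately gives $p_{i\/j}\leq r_j$ for every $j\neq i$ and $p_{i\/i}\leq r_i+1$, so the $i$-th coordinate is the only one at which $\tilde{p}_i$ can exceed $\tilde{r}$.

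The key observation for (ii) is that since $\tilde{r}\notin U(G)$ one has $\tilde{p}_i\not\leq\tilde{r}$, and combined with $p_{i\/j}\leq r_j$ for all $j\neq i$ this forces $p_{i\/i}>r_i$; together with $p_{i\/i}\leq r_i+1$, this pins down $p_{i\/i}=r_i+1$, which is the coordinate formula in (ii) read off the definitions. Distinctness of the $\tilde{p}_i$ then falls out by pigeonhole: if $\tilde{p}_i=\tilde{p}_j$ for some $i\neq j$, the constraint $p_{j\/i}\leq r_i$ coming from the witness at coordinate $j$ would collide with $p_{i\/i}=r_i+1$, a contradiction.

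For (i), once the $\tilde{p}_i$ are in hand no further work is required: the containment $U(\tilde{p}_1,\ldots,\tilde{p}_d)\subseteq U(G)$ forces $\tilde{r}\notin U(\tilde{p}_1,\ldots,\tilde{p}_d)$, while $\tilde{p}_i\leq\tilde{r}+\tilde{e}_i$ gives $\tilde{r}+\tilde{e}_i\in U(\tilde{p}_i)\subseteq U(\tilde{p}_1,\ldots,\tilde{p}_d)$, so $\tilde{r}$ is maximal in the complement and hence lies in $S_d(U(\tilde{p}_1,\ldots,\tilde{p}_d))$.

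For the dual assertion I would simply invoke the anti-automorphism $\rho_{\tilde{c}}$ from Section~\ref{sec:up-down} for some $\tilde{c}\in\ints^d$ that is coordinatewise at least as large as every point of $Q$: this turns $Q$ into an antichain $G=\rho_{\tilde{c}}(Q)$ and, via Observation~\ref{obs:rotation}, turns $\tilde{r}\in S_u(D(Q))$ into $\rho_{\tilde{c}}(\tilde{r})\in S_d(U(G))$. Applying the first part to $G$ and then transporting the conclusion back through $\rho_{\tilde{c}}$ yields the dual witnesses and the dual coordinate formula. I do not foresee any real obstacle here; the only subtlety is the careful coordinate bookkeeping that pins down $p_{i\/i}=r_i+1$ and the short pigeonhole step that establishes distinctness.
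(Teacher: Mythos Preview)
The paper states this claim without proof, so there is no argument to compare against; your approach is the natural one and is essentially correct. You correctly derive $p_{i\/i}=r_i+1$, i.e.\ $r_i=p_{i\/i}-1$, from the witness conditions, and the distinctness and part~(i) arguments are fine.

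There is, however, a point you should not gloss over: what you derive is $r_i=p_{i\/i}-1$, which is \emph{not} the formula written in part~(ii) of the claim as stated, namely $\tilde{r}=(p_{1\/1}+1,\ldots,p_{d\/d}+1)$. The two halves of the claim as printed have their signs swapped: for $S_d(U(G))$ the correct conclusion is $r_i=p_{i\/i}-1$, and dually for $S_u(D(Q))$ it is $r_i=p_{i\/i}+1$. The paper's own subsequent use of the dual statement (in the proof of Lemma~\ref{lmm:p1-pk-box} and in Corollary~\ref{cor:pq-box}, where elements of $S_u(D)$ have coordinates $p_{\cdot}+1$) confirms this. Likewise, item~(i) of the dual half should read $\tilde{r}\in S_u(D(\tilde{p}_1,\ldots,\tilde{p}_d))$ rather than $S_d(U(\ldots))$. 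Your computation is right; you should flag the typo rather than assert that your result ``is the coordinate formula in~(ii).''

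One minor remark on the duality step: the restriction that $\tilde{c}$ be coordinatewise at least as large as every point of $Q$ is unnecessary, since $\rho_{\tilde{c}}$ is an anti-automorphism of all of $({\ints}^d,\leq)$ and the first half of the claim is stated for antichains in ${\ints}^d$, not merely in ${\nats}_0^d$.
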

We can in similar fashion, as for Corollary~\ref{cor:mon-Gorenstein},
consider the case when $|S|=k$, that is
the corresponding antichain $Q$ of ${\ints}_0^d$ contains
$k$ points $Q = \{\tilde{p}_1,\ldots,\tilde{p}_k\}$, where
each $\tilde{p}_i = (p_{i\/1},\ldots,p_{i\/d})$. In this case we have
$Q_*(-\tilde{1},\tilde{b}) = Q\cup B_*(-\tilde{1},\tilde{b})$
where $B_*(-\tilde{1},\tilde{b})$ is as in (\ref{eqn:B_*(a,b)}),
and hence $b_i \geq \max(p_{i\/1}+1,\ldots,p_{i\/d}+1)$ for each index $i$, 
and so the downset $D(Q_*(-\tilde{1},\tilde{b}))$ is generated
by an antichain of $d+k$ elements. For convenience 
we let 
\[
\tilde{b}(i):= (b_1,\ldots,b_{i-1},-1,b_{i+1},\ldots,b_d),
\]
for each $i\in [d]$, so 
$B_*(-\tilde{1},\tilde{b}) = \{\tilde{b}(1),\ldots,\tilde{b}(d)\}$.
The following lemma provides a useful tool.
\begin{lemma}
\label{lmm:p1-pk-box}
For ${\cal{I}} = \{i_1,\ldots i_k\}\subseteq [d]$ and 
$D = D(\tilde{p}_1,\ldots,\tilde{p}_k,\tilde{b}(1),\ldots,
\widehat{\tilde{b}(i_1)},\ldots,\widehat{\tilde{b}(i_k)},\ldots,\tilde{b}(d))$
we have
\[
S_u(D) =
\left\{
\begin{array}{ll}
  \left\{\sum_{\ell = 1}^k r_{i_{\ell}}\tilde{e}_{i_{\ell}}\right\} &
  \mbox{ if } S_u(D(\pi_I(\tilde{p}_1),\ldots,\pi_I(\tilde{p}_k)))
  = \{(r_{i_1},\ldots,r_{i_k})\} \neq\emptyset \\
    \emptyset & \mbox{ otherwise. }
\end{array}
\right.
\]
\end{lemma}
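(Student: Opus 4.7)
The downset $D$ is generated by exactly $d$ vectors, namely the $k$ antichain points $\tilde{p}_1,\ldots,\tilde{p}_k$ together with the $d-k$ ``box faces'' $\tilde{b}(j)$ for $j\in [d]\setminus\mathcal{I}$, and a direct coordinate check shows these $d$ generators themselves form an antichain in ${\ints}^d$, so Claim~\ref{clm:d-elts} applies to $D$. My plan is to show that the embedding $\iota\colon {\ints}^k\to {\ints}^d$ defined by $\iota(y_1,\ldots,y_k)=\sum_{\ell=1}^{k} y_\ell \tilde{e}_{i_\ell}$ restricts to a bijection between $S_u(D(\pi_\mathcal{I}(\tilde{p}_1),\ldots,\pi_\mathcal{I}(\tilde{p}_k)))$ and $S_u(D)$, with inverse $\pi_\mathcal{I}$. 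Granting this, the stated dichotomy is immediate, since applying Claim~\ref{clm:d-elts} to the $k$ projections viewed as at most $k$ points in ${\ints}^k$ forces $|S_u(\cdot)|\leq 1$ in the projected setting.

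For the pull-back direction, given $\tilde{y}\in S_u(D(\pi_\mathcal{I}(\tilde{p}_1),\ldots,\pi_\mathcal{I}(\tilde{p}_k)))$ and $\tilde{r}:=\iota(\tilde{y})$, I would verify $\tilde{r}\in S_u(D)$ by routine coordinate bookkeeping. First, $\tilde{r}\notin D$: no $\tilde{b}(j)$ dominates $\tilde{r}$ since $\tilde{r}_j=0>-1$ for $j\notin\mathcal{I}$, while $\tilde{r}\leq\tilde{p}_i$ would collapse under $\pi_\mathcal{I}$ to $\tilde{y}\leq\pi_\mathcal{I}(\tilde{p}_i)$, contradicting $\tilde{y}\notin D(\pi_\mathcal{I}(\tilde{p}_1),\ldots,\pi_\mathcal{I}(\tilde{p}_k))$. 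Second, $\tilde{r}-\tilde{e}_j\in D$ for each $j$: when $j\notin\mathcal{I}$ the vector $\tilde{b}(j)$ directly dominates $\tilde{r}-\tilde{e}_j$, and when $j=i_\ell\in\mathcal{I}$ any $\tilde{p}_i$ dominating $\tilde{y}-\tilde{e}_\ell$ in the projected downset also dominates $\tilde{r}-\tilde{e}_{i_\ell}$ in ${\ints}^d$, using that $(\tilde{p}_i)_{\ell'}\geq 0 = \tilde{r}_{\ell'}$ for $\ell'\notin\mathcal{I}$.

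The reverse direction amounts to showing that $\tilde{r}_j=0$ for every $j\in[d]\setminus\mathcal{I}$ and every $\tilde{r}\in S_u(D)$; once this is proved, $\pi_\mathcal{I}(\tilde{r})\in S_u(D(\pi_\mathcal{I}(\tilde{p}_1),\ldots,\pi_\mathcal{I}(\tilde{p}_k)))$ follows routinely by reversing the easy-direction argument. As a preliminary I would establish the bounds $r_\ell\leq b_\ell$ for every $\ell$: for any $\ell''\neq\ell$, a generator dominating $\tilde{r}-\tilde{e}_{\ell''}$ must have $\ell$-th coordinate at least $r_\ell$, but every generator of $D$ has $\ell$-th coordinate at most $b_\ell$. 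Coupled with the observation that $r_j\leq -1$ combined with these bounds would force $\tilde{r}\leq\tilde{b}(j)\in D$, this immediately yields $r_j\geq 0$ for every $j\notin\mathcal{I}$.

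The main obstacle, and the crux of the argument, is the matching upper bound $r_j\leq 0$ for $j\notin\mathcal{I}$, which I would handle via a pigeonhole contradiction. Suppose $r_{j_0}\geq 1$ for some $j_0\notin\mathcal{I}$. For each $j\in[d]$, fix a generator $g^{(j)}$ with $g^{(j)}\geq\tilde{r}-\tilde{e}_j$; the condition $\tilde{r}\notin D$ then forces $(g^{(j)})_j = r_j-1$. For every $j\in\mathcal{I}\cup\{j_0\}$, the generator $g^{(j)}$ cannot be any $\tilde{b}(j')$: the subcase $j'=j$ fails because $\tilde{b}(j)$ is not a generator when $j\in\mathcal{I}$, and because it would impose $r_{j_0}=0$ when $j=j_0$; the subcase $j'\neq j$ (with necessarily $j'\notin\mathcal{I}$) would force $r_{j'}\leq -1$, contradicting the already-established lower bound $r_{j'}\geq 0$. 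So each such $g^{(j)}$ is some $\tilde{p}_{i(j)}$, and the argument of Claim~\ref{clm:d-elts} (namely $(\tilde{p}_{i(j)})_j = r_j-1 < r_j \leq (\tilde{p}_{i(j')})_j$ for $j\neq j'$) shows these $\tilde{p}_{i(j)}$'s are pairwise distinct as $j$ ranges over the set $\mathcal{I}\cup\{j_0\}$ of cardinality $k+1$. This produces $k+1$ distinct elements of the $k$-element set $\{\tilde{p}_1,\ldots,\tilde{p}_k\}$, the desired contradiction, which completes the proof.
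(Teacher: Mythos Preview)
Your proof is correct and follows essentially the same strategy as the paper's: both arguments establish $\tilde r\le\tilde b$ and then show that the coordinates $r_j$ with $j\notin\mathcal I$ must vanish by exploiting the fact that the $d$ directions $\tilde e_j$ must be matched injectively to the $d$ generators of $D$. The paper invokes Claim~\ref{clm:d-elts} up front to obtain a permutation $\sigma\in S_d$ assigning each generator to a direction, and then reads off $\sigma(\ell)=\ell$ (hence $r_\ell=0$) for $\ell\notin\mathcal I$ directly from the bound $\tilde r\le\tilde b$; you instead argue by contradiction, producing $k+1$ distinct $\tilde p$'s if some $r_{j_0}\ge 1$, which is the same injectivity principle rephrased as pigeonhole. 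One point worth noting: you explicitly supply the pull-back direction (that a point of the projected $S_u$ lifts to a point of $S_u(D)$), which the paper's proof does not write out; this makes your argument more complete, at the cost of needing the ambient hypotheses $\tilde p_i\in\nats_0^d$ and $b_j\ge M_j$ to justify the ``routine'' inequalities in that paragraph.
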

\begin{proof}
For $\tilde{r}\in S_u(D)$ we have by definition that $\tilde{r}\not\in D$
and $\tilde{r} - \tilde{e}_{\ell} \in D$ for each $\ell$.
Since $D\subseteq D(\tilde{b})$ we have then $r_k\leq b_k$
for each $k\neq \ell$.
As this holds for each $\ell$ we have $\tilde{r}\leq\tilde{b}$. 

Further, for $\tilde{r}\in S_u(D)$, there is, analogous to 
Claim~\ref{clm:d-elts}, a permutation $\sigma\in S_d$ such
that $\tilde{r} - \tilde{e}_{\sigma(\ell)} \leq \tilde{b}(\ell)$
for each index $\ell\not\in {\cal{I}}$ and
$\tilde{r} - \tilde{e}_{\sigma(i_{\ell})}\leq\tilde{p}_{\ell}$
for $1\leq \ell\leq k$.
Since $\tilde{r}\not\in D$ we hence have for each $\ell\not\in {\cal{I}}$ that 
\begin{equation}
\label{eqn:b+1}
r_{\sigma(\ell)} =
\left\{
\begin{array}{ll}
  b_{\sigma(\ell)} + 1 & \mbox{ if } \sigma(\ell)\neq \ell \\
  0 & \mbox{ if } \sigma(\ell) = \ell
\end{array}
\right.
\end{equation}
and $r_{\sigma(\ell)} = p_{\ell\/\sigma(\ell)} + 1$ for each $\ell\in {\cal{I}}$.
Since $\tilde{r}\leq\tilde{b}$ we must by (\ref{eqn:b+1}) have that
$r_{\sigma(\ell)} = 0$ for each $\ell\not\in {\cal{I}}$ and therefore
$\sigma(\ell) = \ell$
for each $\ell\not\in {\cal{I}}$. Hence we have $r_{\ell} = 0$ for each
$\ell\not\in {\cal{I}}$.
Consequently $\sigma \in S({\cal{I}})$ is a permutation only on ${\cal{I}}$
and leaves every other element of $[d]\setminus {\cal{I}}$ fixed.
In particular we have that
%$\tilde{r} = \sum_{\ell = 1}^k r_{\sigma(\ell)}\tilde{e}_{\sigma(\ell)}$
%that is
\[
\tilde{r} = \sum_{\ell = 1}^k r_{i_{\ell}}\tilde{e}_{i_{\ell}} =
\sum_{i\in {\cal{I}}}r_i\tilde{e}_i.
\]
Since $\tilde{r}\not\in D$ we have
$\tilde{r}\not\in D(\tilde{p}_{i_1},\ldots,\tilde{p}_{i_k})$ and
since $\tilde{r} - \tilde{e}_{\sigma(i_{\ell)}}\leq\tilde{p}_{\ell}$
for each $\ell\in [k]$, or
$\tilde{r} - \tilde{e}_{\ell} \leq \tilde{p}_{\gamma(\ell)}$
for each $\ell\in {\cal{I}}$, where $\gamma : {\cal{I}} \rightarrow [k]$
is the map $i_{\ell} \mapsto \ell$, we have
$(r_{i_1},\ldots,r_{i_k})
= \pi_{\cal{I}}(\tilde{r})\in S_u(D(\pi_{\cal{I}}(\tilde{p}_1),
\ldots,\pi_{\cal{I}}(\tilde{p}_k)))$,
which by Claim~\ref{clm:d-elts} is uniquely determined.
\end{proof}
From the above proof we note that if $S_u(D)\neq\emptyset$ and
$\tilde{r}\in S_u(D)$, then $r_{\sigma(i_{\ell})} = P_{\ell\/\sigma(i_{\ell})}+1$
for $1\leq \ell\leq k$ and so
$r_{\sigma(i_{\ell})} = P_{\gamma(i_{\ell})\/\sigma(i_{\ell})}+1$ 
for $1\leq \ell\leq k$, or $r_{\ell} = p_{\beta(\ell)\/\ell}+1$
for each $\ell\in {\cal{I}}$, where $\beta = \gamma\circ{\sigma}^{-1}$,
and so
\[
\tilde{r} = (p_{\beta(i_i)\/i_1}+1)\tilde{e}_{i_1}
+ \cdots + (p_{\beta(i_k)\/i_k}+1)\tilde{e}_{i_k}
\]
for the bijection $\beta : {\cal{I}}\rightarrow [k]$.
From this we get the following.
\begin{observation}
\label{obs:p1-pk-box}
Let ${\cal{I}} = \{i_1,\ldots i_k\}\subseteq [d]$ and 
$D = D(\tilde{p}_1,\ldots,\tilde{p}_k,\tilde{b}(1),\ldots,
\widehat{\tilde{b}(i_1)},\ldots,\widehat{\tilde{b}(i_k)},\ldots,\tilde{b}(d))$.
If $S_u(D)\neq\emptyset$ and $\tilde{r}\in S_u(D)$ is its unique element,
then there is a permutation $\alpha\in S_k$ such that
\[
\pi_{\cal{I}}(\tilde{r}) = (p_{\alpha(1)\/i_1}+1,\ldots,p_{\alpha(k)\/i_k}+1)\in
S_u(D(\pi_{\cal{I}}(\tilde{p}_1),\ldots,\pi_{\cal{I}}(\tilde{p}_k))).
\]
\end{observation}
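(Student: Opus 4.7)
The plan is to obtain this observation as a direct consequence of Lemma~\ref{lmm:p1-pk-box} combined with the coordinate analysis already carried out in the paragraph immediately preceding the statement. The main content is only a repackaging of what has been extracted during and after the proof of Lemma~\ref{lmm:p1-pk-box}, so the work is essentially notational.

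First, since $\tilde{r}\in S_u(D)$ is assumed to exist, Lemma~\ref{lmm:p1-pk-box} forces $S_u(D(\pi_{\mathcal{I}}(\tilde{p}_1),\ldots,\pi_{\mathcal{I}}(\tilde{p}_k)))$ to be nonempty and to consist of the single element $\pi_{\mathcal{I}}(\tilde{r})$; this already takes care of the membership assertion in the conclusion.

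Next, I would invoke the explicit coordinate formulas derived just before the statement. The proof of Lemma~\ref{lmm:p1-pk-box} produces a permutation $\sigma\in S_d$ that fixes every index outside $\mathcal{I}$, so $\sigma$ restricts to a permutation of $\mathcal{I}$, and that satisfies $r_{\sigma(i_\ell)} = p_{\ell\/\sigma(i_\ell)}+1$ for each $\ell\in[k]$. Composing $\sigma^{-1}$ with the natural bijection $\gamma:\mathcal{I}\to[k]$ sending $i_\ell\mapsto\ell$ yields the bijection $\beta:=\gamma\circ\sigma^{-1}:\mathcal{I}\to[k]$ with $r_\ell = p_{\beta(\ell)\/\ell}+1$ for every $\ell\in\mathcal{I}$, exactly as displayed in the paragraph preceding the Observation.

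Finally, I would define the sought permutation $\alpha\in S_k$ by setting $\alpha(j):=\beta(i_j)$ for $1\leq j\leq k$. Since both $j\mapsto i_j$ and $\beta$ are bijections, so is $\alpha$, and by construction
\[
\pi_{\mathcal{I}}(\tilde{r}) = (r_{i_1},\ldots,r_{i_k}) = (p_{\alpha(1)\/i_1}+1,\ldots,p_{\alpha(k)\/i_k}+1),
\]
which together with the first step gives the full statement. The only real subtlety, and the closest thing to an obstacle, is confirming that the permutation produced by the proof of Lemma~\ref{lmm:p1-pk-box} really does fix $[d]\setminus\mathcal{I}$ pointwise, so that $\alpha$ is well-defined as a permutation of $[k]$; but this restriction property is precisely what the ``$\tilde{r}\leq\tilde{b}$ forces $r_{\sigma(\ell)}=0$ for $\ell\not\in\mathcal{I}$'' step in that proof establishes, so no new argument is required.
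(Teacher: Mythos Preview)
Your proposal is correct and follows essentially the same route as the paper: the paper derives the Observation in the paragraph immediately preceding it, extracting from the proof of Lemma~\ref{lmm:p1-pk-box} the bijection $\beta=\gamma\circ\sigma^{-1}:\mathcal{I}\to[k]$ with $r_\ell=p_{\beta(\ell)\,\ell}+1$, and your definition $\alpha(j):=\beta(i_j)$ is precisely the natural relabeling that converts this into a permutation of $[k]$.
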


\subsection*{The case when $k=|Q|=2$} 
The second special case of $k=2$, so $Q = \{\tilde{p},\tilde{q}\}$,
is, as we will see here below, simple enough so that we can
state some conclusion in a direct and uncluttered manner.
The following follows directly from the above
Lemma~\ref{lmm:p1-pk-box} and Observation~\ref{obs:p1-pk-box}.
\begin{corollary}
\label{cor:pq-box}
For two distinct indices $i,j\in [d]$ and 
$D = D(\tilde{p},\tilde{q},\tilde{b}(1),\ldots,
\widehat{\tilde{b}(i)},\ldots,\widehat{\tilde{b}(j)},\ldots,\tilde{b}(d))$
we have
\[
S_u(D) = 
\left\{
\begin{array}{ll}
\{(p_i+1)\tilde{e}_i + (q_j+1)\tilde{e}_j\} & 
  \mbox{ if } p_i<q_i\mbox{ and }q_j < p_j \\
\{(q_i+1)\tilde{e}_i + (p_j+1)\tilde{e}_j\} & 
  \mbox{ if } q_i<p_i\mbox{ and }p_j < q_j \\
\emptyset & \mbox{ otherwise.}
\end{array}
\right.
\]
\end{corollary}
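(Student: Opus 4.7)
The plan is to apply Lemma~\ref{lmm:p1-pk-box} and Observation~\ref{obs:p1-pk-box} with $k=2$ and $\cal{I}=\{i,j\}$, reducing the problem to a computation in the poset ${\ints}^2$. By Lemma~\ref{lmm:p1-pk-box}, $S_u(D)$ is either empty or a single element $r_i\tilde{e}_i + r_j\tilde{e}_j$ with $(r_i,r_j)$ the unique element of $S_u(D(\pi_{\cal{I}}(\tilde{p}), \pi_{\cal{I}}(\tilde{q})))$; moreover Observation~\ref{obs:p1-pk-box} identifies the two a priori possibilities for this element as $(p_i+1, q_j+1)$ or $(q_i+1, p_j+1)$, corresponding to the two choices of the permutation $\alpha$ therein.

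Next I would perform a short case analysis on the two-dimensional points $(p_i,p_j)$ and $(q_i,q_j)$, observing that the two singled-out conditions in the statement ($p_i < q_i$ and $q_j < p_j$, or $q_i < p_i$ and $p_j < q_j$) describe precisely the two incomparable configurations in ${\ints}^2$, while ``otherwise'' collects all comparable configurations (including equality in one or both coordinates). In the incomparable case $p_i<q_i$, $q_j<p_j$, I would verify directly that $(p_i+1, q_j+1)\notin D(\pi_{\cal{I}}(\tilde{p}), \pi_{\cal{I}}(\tilde{q}))$ (its $i$-coordinate exceeds $p_i$ and its $j$-coordinate exceeds $q_j$), while $(p_i, q_j+1)\leq (p_i,p_j)$ and $(p_i+1, q_j)\leq (q_i,q_j)$, so that this point lies in $S_u$ of the $2$-dimensional downset; the other incomparable case is handled by swapping the roles of $\tilde{p}$ and $\tilde{q}$. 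In the ``otherwise'' branch, the downset collapses to $D(\tilde{s})$ for the single larger projected point $\tilde{s}$, and since the complement of a principal downset in ${\ints}^2$ has no minimal element (any element outside $D(\tilde{s})$ still admits a further decrease in the ``free'' coordinate), we get $S_u = \emptyset$.

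The main obstacle is essentially bookkeeping: one must verify that the split between ``incomparable'' and ``otherwise'' is exhaustive and correctly handles degenerate sub-cases such as $p_i = q_i$ or $p_j = q_j$, where $\tilde{p}$ and $\tilde{q}$ remain incomparable in ${\ints}^d$ but their $(i,j)$-projections become comparable in ${\ints}^2$. Once this is in place, the conclusion follows immediately from the two cited results with no further machinery required.
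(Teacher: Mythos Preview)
Your proposal is correct and follows exactly the approach indicated in the paper, which simply states that the corollary ``follows directly from the above Lemma~\ref{lmm:p1-pk-box} and Observation~\ref{obs:p1-pk-box}.'' Your write-up merely unpacks this: you specialize to $k=2$ and ${\cal I}=\{i,j\}$, identify the two candidate points via the permutation $\alpha\in S_2$, and then do the routine two-dimensional case check distinguishing whether $\pi_{\cal I}(\tilde p)$ and $\pi_{\cal I}(\tilde q)$ are comparable or not, which is precisely what those two results reduce the problem to.
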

For the antichain $Q = \{\tilde{p},\tilde{q}\}$ of ${\nats}_0^d$ 
we have a {\em pseudo-partition} $[d] = A\cup B\cup C$
(some parts could be empty) 
where $p_i<q_i$ for all $i\in A$, $p_i=q_i$ for all $i\in B$,
and $p_i>q_i$ for all $i\in C$. We have here that 
\[
%\begin{equation}
%\label{eqn:pq-b}
Q_*(-\tilde{1},\tilde{b}) = Q\cup B_*(-\tilde{1},\tilde{b})
= \{\tilde{p},\tilde{q}, \tilde{b}(1),\ldots,\tilde{b}(d)\}.
%\end{equation}
\]
Since $S_u(D)$ forms an antichain, for any downset $D\subseteq {\ints}^d$,
and each element of $S_u(Q_*(-\tilde{1},\tilde{b})$ Is uniquely 
determined by a $d$-subset of $Q_*(-\tilde{1},\tilde{b})$, then
$S_u(Q_*(-\tilde{1},\tilde{b})$ is among the at most 
$\binom{d+2}{d} = \binom{d+2}{2}$ elements of
$\bigcup_{Q'}S_u(D(Q'))$, where the union is taken over all
$Q'\subseteq Q_*(-\tilde{1},\tilde{b})$ with cardinality $d$.
Since $[d] = A\cup B\cup C$ is a pseudo-partition, then
by letting $|A| = {\mathbf{a}}, |B| = {\mathbf{b}}$ and 
$|C| = {\mathbf{c}}$ we have
$|A| + |B\cup C| = {\mathbf{a}} + {\mathbf{b}} + {\mathbf{c}} = d$, 
and therefore, by Corollary~\ref{cor:pq-box}, we then have the following.
\begin{proposition}
\label{prp:type 2}
For an antichain $Q = \{\tilde{p},\tilde{q}\}$ of of ${\ints}_0^d$
and the corresponding pseudo-partition $[d] = A\cup B\cup C$
we have 
\begin{eqnarray*}
\lefteqn{S_u(D(Q_*(-\tilde{1},\tilde{b}))) = } \\ 
& & \{(p_i + 1)\tilde{e}_i + (q_j + 1)\tilde{e}_j : (i,j)\in A\times C\}
\cup 
\{(p_i + 1)\tilde{e}_i : i\in B\cup C\}
\cup 
\{(q_i + 1)\tilde{e}_i : i\in A\},
\end{eqnarray*}
and hence $|S_u(D(Q_*(-\tilde{1},\tilde{b})))| = {\mathbf{a}}{\mathbf{c}} + d$.
\end{proposition}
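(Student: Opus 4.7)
The plan is to apply Claim~\ref{clm:d-elts} to classify every element $\tilde{r}\in S_u(D(Q_*(-\tilde{1},\tilde{b})))$ by the $d$-element subset of the generating antichain $Q_*(-\tilde{1},\tilde{b})=\{\tilde{p},\tilde{q},\tilde{b}(1),\ldots,\tilde{b}(d)\}$ that witnesses it. Since this antichain has exactly $d+2$ elements, there are only $\binom{d+2}{2}$ ways to omit two generators, and each case will be handled by one of the previously proved lemmas. After producing this finite list of candidates, the final step is to discard any that actually lie in $D(Q_*(-\tilde{1},\tilde{b}))$.

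The case analysis proceeds by omission pattern. Omitting two $\tilde{b}(i),\tilde{b}(j)$ invokes Corollary~\ref{cor:pq-box}, which yields a candidate exactly when the pair straddles $A$ and $C$, producing $(p_i+1)\tilde{e}_i+(q_j+1)\tilde{e}_j$ for $(i,j)\in A\times C$. Omitting $\tilde{p}$ and $\tilde{b}(i)$ invokes Lemma~\ref{lmm:p1-pk-box} with $k=1$, $\mathcal{I}=\{i\}$ and produces $(q_i+1)\tilde{e}_i$; symmetrically, omitting $\tilde{q}$ and $\tilde{b}(i)$ produces $(p_i+1)\tilde{e}_i$. Finally, omitting both $\tilde{p}$ and $\tilde{q}$ leaves the downset $D(\tilde{b}(1),\ldots,\tilde{b}(d))=\{\tilde{x}\in\ints^d:\min_k x_k\leq -1\}$, whose unique minimal non-member is $\tilde{0}$.

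The filtering is then a one-line coordinate check: every candidate has nonnegative coordinates, so it cannot lie in any $D(\tilde{b}(k))$, and hence it survives precisely when it is not dominated by $\tilde{p}$ or $\tilde{q}$. This kills $\tilde{0}$ (which lies under $\tilde{p}$); retains $(p_i+1)\tilde{e}_i$ iff $i\in B\cup C$, since it falls under $\tilde{q}$ iff $i\in A$; retains $(q_i+1)\tilde{e}_i$ iff $i\in A\cup B$, dually; and retains every two-coordinate candidate $(p_i+1)\tilde{e}_i+(q_j+1)\tilde{e}_j$ with $(i,j)\in A\times C$, since its $i$-th coordinate exceeds $p_i$ and its $j$-th coordinate exceeds $q_j$.

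The only real obstacle is bookkeeping: when $i\in B$ one has $p_i=q_i$, so the two surviving singletons $(p_i+1)\tilde{e}_i$ and $(q_i+1)\tilde{e}_i$ coincide and must not be counted twice. Restricting the third set in the displayed formula to $i\in A$ removes exactly this overlap, after which the three pieces are disjoint by their support patterns (a pair of coordinates for the first, a single coordinate lying in $B\cup C$ for the second, and a single coordinate lying in $A$ for the third). This yields the total cardinality $\mathbf{a}\mathbf{c}+(\mathbf{b}+\mathbf{c})+\mathbf{a}=\mathbf{a}\mathbf{c}+d$.
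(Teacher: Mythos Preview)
Your proof is correct and follows essentially the same approach as the paper: enumerate candidates for $S_u(D(Q_*(-\tilde{1},\tilde{b})))$ by running over all $\binom{d+2}{2}$ choices of a $d$-element subset of the generating antichain (via Claim~\ref{clm:d-elts}), invoke Corollary~\ref{cor:pq-box} and Lemma~\ref{lmm:p1-pk-box} to compute each candidate, and then filter. The paper's justification is quite terse (essentially just ``by Corollary~\ref{cor:pq-box}''), whereas you spell out explicitly the cases where $\tilde{p}$ or $\tilde{q}$ (or both) are omitted, the filtering against $D(\tilde{p})$ and $D(\tilde{q})$, and the $i\in B$ overlap that the paper only comments on after the statement; but the underlying argument is the same.
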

Note that 
\begin{eqnarray*}
&   & \{(p_i + 1)\tilde{e}_i : i\in B\cup C\} \cup 
\{(q_i + 1)\tilde{e}_i : i\in A\} \\
& = & \{(p_i + 1)\tilde{e}_i : i\in B\cup C\} \cup 
\{(q_i + 1)\tilde{e}_i : i\in A\cup B\} \\
& = & \{(p_i + 1)\tilde{e}_i : i\in C\} \cup 
\{(q_i + 1)\tilde{e}_i : i\in A\cup B\}.
\end{eqnarray*}
In the above display, the first and the last unions are
pseudo-partitions and not symmetric,  whereas the middle union 
is symmetric but not disjoint.

Similar to Corollary~\ref{cor:mon-Gorenstein} describing
Artinian Gorenstein rings defined by monomial from~\cite{Beintema},
we can interpret the above Proposition~\ref{prp:type 2} for
Artinian rings defined by monomials that are
{\em almost Gorenstein} as defined in~\cite{Huneke-Vraciu}
as those monomial ideals $I$ of $R = K[x_1,\ldots,x_d]$ with
$\doc(I) = S$ from Proposition~\ref{prp:retrieve}
being a set of ``small'' cardinality. More specifically,
if $I$ is a zero dimensional monomial ideal of $R$ with $|S| = k\geq 1$,
then $I$ is said to be of {\em type $k$}, and so type 1 zero
dimensional ideals are exactly the Artinian Gorenstein monomial
ideals~\cite{Huneke-Vraciu}. The above Proposition~\ref{prp:type 2}
gives a complete description of the type 2 zero dimensional monomial
ideals of $R$.
\begin{corollary}
\label{cor:mon-type 2}
Let $R = K[x_1,\ldots,x_d]$ and $I$ be a monomial ideal of $R$. 
Then $R/I$ is a zero dimensional local type 2 ring
if, and only if, for two incomparable $\tilde{p},\tilde{q}\in {\ints}_0^d$
with corresponding pseudo-partition $[d] = A\cup B\cup C$ we have
\[
I = (x_i^{p_i+1}x_j^{q_j+1} , x_h^{p_h+1} , x_k^{q_k+1}
    : (i,j)\in A\times C, h\in B\cup C, k\in A),
\]
in which case $\doc(I) = S = \{\tilde{x}^{\tilde{p}}, \tilde{x}^{\tilde{q}}\}$.
\end{corollary}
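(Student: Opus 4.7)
The plan is to derive this corollary as a direct ring-theoretic translation of Proposition~\ref{prp:type 2}, combined with the uniqueness afforded by Proposition~\ref{prp:retrieve} and the dual retrieval in Theorem~\ref{thm:main-gen-ab-dual}. First, I would observe that by definition, $R/I$ being zero-dimensional of type $2$ is equivalent to $I$ being a zero-dimensional monomial ideal with $|\doc(I)| = 2$, i.e.\ $\doc(I) = \{\tilde{x}^{\tilde{p}}, \tilde{x}^{\tilde{q}}\}$ for two incomparable exponent vectors $\tilde{p}, \tilde{q} \in \nats_0^d$. Setting $Q = \{\tilde{p}, \tilde{q}\}$, the pseudo-partition $[d] = A \cup B \cup C$ used in the corollary is then unambiguously determined.

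For the forward direction, Proposition~\ref{prp:retrieve} guarantees a unique zero-dimensional $I$ with $\doc(I) = Q$. Unwinding the construction preceding that proposition, this unique ideal is the one whose exponent generating set equals the antichain $S_u(D(Q_*(-\tilde{1}, \tilde{b})))$, for any $\tilde{b} \geq \tilde{M}$; for concreteness one may pick $b_i := \max(p_i, q_i) + 1$. Proposition~\ref{prp:type 2} then computes this antichain explicitly as the disjoint union of three families of lattice points. Translating each exponent vector $\sum c_i \tilde{e}_i$ to its monomial $\prod x_i^{c_i}$ yields precisely the three families of generators in the corollary: $x_i^{p_i+1} x_j^{q_j+1}$ for $(i,j)\in A\times C$, $x_h^{p_h+1}$ for $h\in B\cup C$, and $x_k^{q_k+1}$ for $k\in A$.

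For the reverse direction, I would start from an ideal $I$ presented as in the corollary. By Proposition~\ref{prp:type 2}, the exponent vectors of its listed generators form exactly $S_u(D(Q_*(-\tilde{1}, \tilde{b})))$, so applying Theorem~\ref{thm:main-gen-ab-dual} with $\tilde{a} = -\tilde{1}$ yields
\[
Q = S_d(U(S_u(D(Q_*(-\tilde{1}, \tilde{b}))))),
\]
which in monomial language says $\doc(I) = \{\tilde{x}^{\tilde{p}}, \tilde{x}^{\tilde{q}}\}$, so $R/I$ is zero-dimensional of type $2$. No substantial obstacle is expected: the corollary is essentially bookkeeping, repackaging Proposition~\ref{prp:type 2} in ring-theoretic language. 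The only small subtlety worth flagging is that the listed families are genuinely a minimal generating set, with minimality inherited from $S_u(\cdot)$ being an antichain, and that for $i\in B$ the coincidence $p_i+1 = q_i+1$ means the two pure-power families for $B\cup C$ and $A$ do not silently duplicate a generator.
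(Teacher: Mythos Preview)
Your proposal is correct and follows exactly the route the paper intends: the corollary is presented there without a separate proof, as a direct ring-theoretic translation of Proposition~\ref{prp:type 2} combined with the uniqueness in Proposition~\ref{prp:retrieve}. Your explicit invocation of Theorem~\ref{thm:main-gen-ab-dual} for the reverse direction and your remark about minimality of the generating set simply make explicit what the paper leaves implicit.
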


\begin{rmks*}\ 
\begin{enumerate}[label=(\roman*)]
\item Note that Corollary~\ref{cor:mon-type 2} characterizes
completely those monomial ideals $I$ of $R = K[x_1,\ldots,x_d]$
that make $R/I$ a local Artinian ring of type 2, in a purely
combinatorial way, using only the poset structure of ${\ints}^d$,
\item Comparing with \cite[Example 4.3]{Huneke-Vraciu} we
see that the analysis of type 2 monomial ideal done there 
is slightly different from ours from Corollary~\ref{cor:mon-type 2}, in that 
there in~\cite{Huneke-Vraciu} the authors describe when exactly the condition
$J_1:J_2 + J_2:J_1 \supseteq \m = (x_1,\ldots,x_d)$ from~\cite[Theorem 4.2]{Huneke-Vraciu} holds, where $I = J_1\cap J_2$ 
is the irredundant intersection of two irreducible ideals $J_1$ and $J_2$, 
each of which must be monomial ideals generated by powers of the
variables $x_1,\ldots,x_d$.
\end{enumerate}
\end{rmks*}

\section{Almost Gorenstein monomial ideals with $k\geq 3$}
\label{sec:kgeq3}

\subsection*{The general case for $k=|Q|\geq 3$} 
We have so far analysed fully the structure of the antichain 
$S_u(Q_*(-\tilde{1},\tilde{b}))$ of ${\ints}_0^d$ in the cases when 
$k=|Q|\in\{1,2\}$, the antichain corresponding to the generators
of a zero-dimensional monomial ideal with $\doc(I)$ corresponding to
$Q$. These two cases are unique in that the form of the antichain $Q$ is
quite simple: an antichain $Q=\{\tilde{p},\tilde{q}\}$ in ${\ints}^d$ 
with the corresponding pseudo-partition $[d] = A\cup B\cup C$ as indicated
in Proposition~\ref{prp:type 2} must have both parts $A$ and $C$ non-empty
and so ${\mathbf{a}}{\mathbf{c}}\geq 1$.
This means that the parts (here $A$ and $C$) corresponding to the two
possible strict orderings of the coordinates, namely $p_i<q_i$ or $p_i>q_i$,
are each nonempty. Needless to say, for an antichain $Q\subseteq {\ints}^d$
where $k=|Q|\geq 3$, this need not be case. The possibilities are simply
to great. There are plenty antichains (in fact, vastly most of them, 
as we shall see momentarily) where not all strict orderings of
the coordinates are present.

Analogous to the pseudo-partition $[d] = A\cup B\cup C$ 
from Proposition~\ref{prp:type 2} we have for general $k\in\nats$ 
a pseudo-partition
\begin{equation}
\label{eqn:partk}
[d] = \bigcup_{\omega\in W(k)}A_{\omega},
\end{equation}
where $W(k)$ is the collection of {\em weak orderings} of the
set $[k]$: the collection of distinct orderings of the
elements where equality is allowed. In particular we clearly have
for each $k\geq 2$ that
$|W(k)| > k!$, the number of strict orderings of $[k]$, and so
the union in (\ref{eqn:partk}) is large.
More specifically, for $k=1,2$ and $3$ there
are $1, 3$ and $13$ weak orderings in $W(k)$ respectively, and, in general,
$a(k) = |W(k)|$ is the $k$-th {\em ordered Bell number} or
{\em Fubini number} given explicitly by
\[
a(k) = \sum_{i = 1}^k\stirling{k}{i}i!,
\]
where $\stirling{k}{i} = S(k,i)$ is the 
Stirling number of the second kind~\cite{Stirling-2} and where the 
exponential generating function for the corresponding sequence 
$(a(k))_{k\geq 0} = (1,1,3,13,75,541,\ldots)$ is given by  
\[
\sum_{k=0}^{\infty}\frac{a(k)}{k!}x^k = \frac{1}{2-e^x}.
\]
Asymptotically
$a(k)\approx \frac{n!}{2(\log2)^{n+1}} = \frac{1}{2}(1.4427\cdots)^{n+1}n!$,
where $\log$ is the natural logarithm and so $\log2 < 1$, and so we see that
the number $k!$ of strict orderings of $[k]$ constitutes only
a small fraction of the $a(k)$ weak orderings of $[k]$. The ordered Bell
numbers have been studied extensively~\cite{OBn-wiki, A000670}.
\begin{definition}
\label{def:order-generic}
An antichain $Q = \{\tilde{p}_1,\ldots,\tilde{p}_k\}\subseteq {\ints}^d$
is called {\em order-generic} if in the corresponding pseudo-partition 
(\ref{eqn:partk}) for each $\omega\in W(k)$ of strict orderings of $[k]$ 
we have $A_{\omega}\neq\emptyset$.
\end{definition}
Note that in order for an antichain $Q\subseteq {\ints}^d$ to be 
order-generic we must have $d\geq k!$. We will see that the 
structure of the antichain $S_u(Q_*(-\tilde{1},\tilde{b}))$ of ${\ints}_0^d$ 
when $Q$ is order-generic is nice enough for some specific enumerations.

The following slight reformulation follows directly
from Lemma~\ref{lmm:p1-pk-box} and Observation~\ref{obs:p1-pk-box}.
\begin{corollary}
\label{cor:p1-pk-box}
For ${\cal{I}} = \{i_1,\ldots i_k\}\subseteq [d]$ containing distinct
elements and 
\[
D = D(\tilde{p}_1,\ldots,\tilde{p}_k,\tilde{b}(1),\ldots,
\widehat{\tilde{b}(i_1)},\ldots,\widehat{\tilde{b}(i_k)},\ldots,\tilde{b}(d))
\]
we have 
\[
S_u(D) = \{(p_{\alpha(1)\/i_1}+1)\tilde{e}_{i_1} 
+ \cdots 
+ (p_{\alpha(k)\/i_k}+1)\tilde{e}_{i_k}\},
\]
if there is a unique permutation $\alpha\in S_k$ with
$p_{\alpha(h)\/i_{h}} < p_{\alpha(1)\/i_{h}},
\ldots, \widehat{p_{\alpha(h)\/i_{h}}},\ldots, p_{\alpha(k)\/i_{h}}$
for each $h\in[k]$ and $S_u(D) = \emptyset$ otherwise.
\end{corollary}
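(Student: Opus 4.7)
The plan is to derive the corollary by making explicit the one-element-or-empty alternative in Observation~\ref{obs:p1-pk-box}. Given any $\tilde{r}\in S_u(D)$, that observation already pins down $\tilde{r}=\sum_{h=1}^k(p_{\alpha(h)\/i_h}+1)\tilde{e}_{i_h}$ for some permutation $\alpha\in S_k$, with every coordinate outside $\cal{I}$ equal to zero; so what remains is purely to characterize which permutations $\alpha$ actually arise and to confirm that at most one does.

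First I would dispose of the free side conditions in the definition of $S_u(D)$. The non-membership $\tilde{r}\not\leq\tilde{b}(\ell)$ for $\ell\notin\cal{I}$ is automatic since $r_\ell=0>-1$, and $\tilde{r}-\tilde{e}_m\leq\tilde{b}(m)$ holds whenever $m\notin\cal{I}$ because the $m$-th coordinate is $-1$ and all remaining coordinates are bounded by the corresponding $b_\ell$. The real content is therefore $\tilde{r}\not\leq\tilde{p}_j$ for all $j\in[k]$, and $\tilde{r}-\tilde{e}_{i_h}\leq\tilde{p}_j$ for some $j\in[k]$ for each $h\in[k]$ (noting that $\tilde{b}(i_h)$ has been excluded from the generating set, so only the $\tilde{p}_j$'s can cover $\tilde{r}-\tilde{e}_{i_h}$).

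Next, a coordinate-by-coordinate check shows that for each $h$ the inequality $\tilde{r}-\tilde{e}_{i_h}\leq\tilde{p}_j$ forces $j=\alpha(h)$: otherwise, writing $j=\alpha(m)$ with $m\neq h$ and looking at coordinate $i_m$ gives the absurd $p_{\alpha(m)\/i_m}<p_{\alpha(m)\/i_m}$. With $j=\alpha(h)$ pinned down, comparing coordinate $i_l$ for $l\neq h$ reduces $\tilde{r}-\tilde{e}_{i_h}\leq\tilde{p}_{\alpha(h)}$ precisely to $p_{\alpha(l)\/i_l}<p_{\alpha(h)\/i_l}$; aggregating these as $h$ ranges over $[k]$, and then swapping the dummy indices $h\leftrightarrow l$, produces the exact condition displayed in the statement. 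The non-membership $\tilde{r}\not\leq\tilde{p}_{\alpha(h)}$ itself falls out from a single comparison at coordinate $i_h$.

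The uniqueness piece is then immediate: the condition says $\alpha(h)$ is the unique strict minimizer of $\{p_{m\/i_h}:m\in[k]\}$ for each $h$, so if any valid $\alpha$ exists it is determined coordinatewise; if either a strict minimizer fails to exist at some $h$ or the resulting function $[k]\to[k]$ is not a bijection, then no valid $\alpha$ exists and Lemma~\ref{lmm:p1-pk-box} forces $S_u(D)=\emptyset$. The only real bookkeeping subtlety is matching the index convention: the inequality at coordinate $i_h$ in the corollary arises from verifying $\tilde{r}-\tilde{e}_{i_l}\leq\tilde{p}_{\alpha(l)}$ for each $l\neq h$, not from a direct computation at $i_h$ itself, but once a consistent convention is fixed the verification is routine.
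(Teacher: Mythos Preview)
Your proposal is correct and follows the same route the paper indicates: the paper simply asserts that the corollary ``follows directly from Lemma~\ref{lmm:p1-pk-box} and Observation~\ref{obs:p1-pk-box}'' without supplying any further argument, and what you have written is precisely the verification that makes this direct consequence explicit. Your coordinate analysis showing that $\tilde r-\tilde e_{i_h}\le\tilde p_j$ forces $j=\alpha(h)$, and the subsequent unpacking of that inequality into the strict-minimizer condition (with the $h\leftrightarrow l$ swap), is exactly the content the paper leaves to the reader.
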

For $k\in\nats$ we have that $S_u(Q_*(-\tilde{1},\tilde{b})$, the set
of ${\nats}_0^d$ corresponding to the generators of our desired monomial
ideal, is the antichain formed by the maximal elements of all
the $\binom{d+k}{d} = \binom{d+k}{k}$ possible singletons 
$S_u(D)$, where $D\subseteq Q_*(-\tilde{1},\tilde{b})$
contains exactly $d$ elements, and hence there are $k$
types of points in $S_u(Q_*(-\tilde{1},\tilde{b})$:
for each $\ell\in[k]$ the ones that are formed
by a $d$-set $D$ that contains exactly $\ell$ points
from $Q = \{\tilde{p}_1,\ldots,\tilde{p}_k\}$. Since our 
antichain is order-generic each mentioned type occurs 
in $S_u(Q_*(-\tilde{1},\tilde{b})$. To effectively
list of these points we make the following convention.
For each $a\in [k]$ and a subset $C\subseteq [k]$
we let
\begin{equation}
\label{eqn:i-C}
B_{a}(C) := \left\{i\in[d] : p_{a\/i} < p_{b\/i}
\mbox{ for all } b\in C\setminus \{a\} \right\}.
\end{equation}
For $C\subseteq C'\subseteq [k]$ we clearly have 
$B_{a}(C')\subseteq B_{a}(C)$. With this convention
the sets of the mentioned type $\ell\in[k]$, that contain
those points formed by $d$-set $D$ containing $\ell$ points from $Q$, 
are those labeled by a $C\subseteq [k]$ with $|C| = \ell$, 
and are of the form
\begin{equation}
\label{eqn:P_C}
P_C = \left\{\sum_{t\in C}(p_{t\/i_t} + 1)\tilde{e}_{i_t} :
(i_t)_{t\in C}\in \prod_{t\in C}B_t(C)\setminus
\bigcup_{C'\supset C}\prod_{t\in C}B_t(C')\right\}.
\end{equation}
In particular, for $\ell = k$ we have
\begin{equation}
  \label{eqn:P_1...k} 
P_{[k]} = \{(p_{1\/i_1}+1)\tilde{e}_{i_1} + \cdots +
  (p_{k\/i_k}+1)\tilde{e}_{i_k} :
  (i_1,\ldots,i_k)\in B_1([k])\times\cdots\times B_k([k])\}.
\end{equation}
and for $\ell = k-1$ we obtain the sets labeled by
$[\hat{a}] := \{1,\ldots,\hat{a},\ldots,k\}$ for each $a\in[k]$ 
as follows
\begin{eqnarray}
\lefteqn{P_{[\hat{a}]} = \{(p_{1\/i_1}+1)\tilde{e}_{i_1} + \cdots +
  (p_{k\/i_k}+1)\tilde{e}_{i_k}
  : (i_1,\ldots,\widehat{i_a},\ldots,i_k) } \nonumber \\
& \in & B_1([\hat{a}])\times\cdots\times\widehat{B_a([\hat{a}])}\times\cdots
\times B_k([\hat{a}]) \setminus B_1([k])\times\cdots\times\widehat{B_a([k])}
\times\cdots\times B_k([k])\}\}. \label{eqn:Pi(1hatjk)}
\end{eqnarray}
Note that when $\ell = 1$, so our $d$-set contains a single element
$\tilde{p}_a$ from $Q$, then we have by this construction that
\[
%\begin{equation}
%\label{eqn:P_h}
P_{\{a\}} = \{(p_{a\/i} + 1)\tilde{e}_i :
p_{a\/i}\geq p_{b\/i}\mbox{ for all } b\neq a\}.
%\end{equation}
\]
\begin{rmks*}\ 
\begin{enumerate}[label=(\roman*)]
\item Note that the sets $P_C$ are constructed directly in terms of 
the sets $B_a(C)$ from (\ref{eqn:i-C}), each which is union
of sets $A_{\omega}$ from (\ref{eqn:partk}).
\item The fact that
$Q$ is order-generic ensures that the sets $B_a(C)$ are all
non-empty, which in return will imply that each $P_C$ from 
(\ref{eqn:P_C}) does not contain comparable elements, 
and is therefore an antichain, as we will see below.
\end{enumerate}
\end{rmks*}

We clearly have $S_u(D(Q_*(-\tilde{1},\tilde{b})))
\subseteq \bigcup_{\emptyset\neq C\subseteq [k]}P_C$,
but we have not verified that this union of these sets $P_C$
is actually an antichain. By the following two lemmas
we will see that this is indeed the case.
\begin{lemma}
\label{lmm:comparable-l}
Let $Q = \{\tilde{p}_1,\ldots,\tilde{p}_k\}$ be an order-generic antichain 
in ${\ints}^d$. If
$\tilde{r} = (p_{a_1\/i_1} + 1)\tilde{e}_{i_1} + \cdots
+ (p_{a_{\ell}\/i_{\ell}} + 1)\tilde{e}_{i_{\ell}}\in P_{\{a_1,\ldots,a_{\ell}\}}$
and 
$\tilde{s} = (p_{b_1\/i_1} + 1)\tilde{e}_{i_1} + \cdots
+ (p_{b_{\ell}\/i_{\ell}} + 1)\tilde{e}_{i_{\ell}}\in P_{\{b_1,\ldots,b_{\ell}\}}$,
then $\tilde{r}$ and $\tilde{s}$ are comparable in ${\ints}^d$
if and only if $\tilde{r} = \tilde{s}$.
\end{lemma}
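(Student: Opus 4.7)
The forward direction is trivial. For the converse, I would assume without loss of generality $\tilde r\leq\tilde s$, so $p_{a_t\/i_t}\leq p_{b_t\/i_t}$ for all $t\in[\ell]$, and suppose toward contradiction that $\tilde r\neq\tilde s$; then some $t_0\in[\ell]$ satisfies $p_{a_{t_0}\/i_{t_0}}<p_{b_{t_0}\/i_{t_0}}$, forcing $a_{t_0}\neq b_{t_0}$. Writing $C_r=\{a_1,\ldots,a_\ell\}$ and $C_s=\{b_1,\ldots,b_\ell\}$, my plan is to dichotomize on whether $b_{t_0}\in C_r$ and extract a contradiction in each branch from the two clauses built into definition (\ref{eqn:P_C}): the strict-minimum memberships $i_t\in B_{a_t}(C_r)$ and $i_t\in B_{b_t}(C_s)$, and the exclusion clause that $(i_t)\notin\prod_{t\in C} B_t(C')$ for every $C'\supsetneq C$.

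In the case $b_{t_0}\notin C_r$, I would apply the exclusion clause for $\tilde r\in P_{C_r}$ at $C'=C_r\cup\{b_{t_0}\}$, producing $t_1\in[\ell]$ with $p_{b_{t_0}\/i_{t_1}}\leq p_{a_{t_1}\/i_{t_1}}$, and then force $t_1=t_0$. Indeed, if $t_1\neq t_0$, then $b_{t_0}$ and $b_{t_1}$ are distinct members of $C_s$, so $i_{t_1}\in B_{b_{t_1}}(C_s)$ yields $p_{b_{t_1}\/i_{t_1}}<p_{b_{t_0}\/i_{t_1}}$, and chaining with $p_{a_{t_1}\/i_{t_1}}\leq p_{b_{t_1}\/i_{t_1}}$ closes the absurd cycle $p_{b_{t_0}\/i_{t_1}}\leq p_{a_{t_1}\/i_{t_1}}\leq p_{b_{t_1}\/i_{t_1}}<p_{b_{t_0}\/i_{t_1}}$. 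Hence $t_1=t_0$, giving $p_{b_{t_0}\/i_{t_0}}\leq p_{a_{t_0}\/i_{t_0}}$ and contradicting strictness at $t_0$.

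In the case $b_{t_0}\in C_r$, the plan is instead to pick the unique $t'\in[\ell]$ with $a_{t'}=b_{t_0}$; since $a_{t_0}\neq b_{t_0}$, one has $t'\neq t_0$, and distinctness of the $b$'s forces $b_{t'}\neq b_{t_0}=a_{t'}$. Thus $a_{t'}=b_{t_0}\in C_s\setminus\{b_{t'}\}$, and the strict-minimum condition $i_{t'}\in B_{b_{t'}}(C_s)$ immediately delivers $p_{b_{t'}\/i_{t'}}<p_{a_{t'}\/i_{t'}}$, contradicting $p_{a_{t'}\/i_{t'}}\leq p_{b_{t'}\/i_{t'}}$.

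The hardest part is finding the right move in this second case: the naive symmetric maneuver of applying the exclusion clause for $\tilde s$ at $a_{t_0}\in C_r\setminus C_s$ does not close into a cyclic chain of inequalities, because the relation $\tilde r\leq\tilde s$ is asymmetric in $r$ and $s$. The needed trick is to pass from the original coordinate $i_{t_0}$ to the new coordinate $i_{t'}$ via the identification $a_{t'}=b_{t_0}$, which inserts $a_{t'}$ into $C_s$ as an ``intruder'' against which the strict-minimum condition on $\tilde s$ fails in one line. Note that the argument uses only the defining conditions of the $P_C$'s and does not invoke the order-generic hypothesis of Definition~\ref{def:order-generic} directly.
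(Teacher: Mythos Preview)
Your proof is correct and follows essentially the same approach as the paper's. Both arguments assume $\tilde r\leq\tilde s$ with a strict coordinate at some $t_0$, then split on whether $b_{t_0}\in C_r$; your Case~2 is exactly the paper's first case (pick $t'$ with $a_{t'}=b_{t_0}$ and use $i_{t'}\in B_{b_{t'}}(C_s)$ to force $p_{b_{t'}\,i_{t'}}<p_{a_{t'}\,i_{t'}}$, contradicting $\tilde r\leq\tilde s$), while your Case~1 is the contrapositive of the paper's second case---the paper directly verifies $i_j\in B_{a_j}(C_r\cup\{b_{t_0}\})$ for \emph{all} $j$ (using $p_{a_j\,i_j}\leq p_{b_j\,i_j}<p_{b_{t_0}\,i_j}$ for $j\neq t_0$), whereas you start from the exclusion clause and chase the resulting $t_1$ back to $t_0$. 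Your observation that order-genericity is not invoked in the argument itself is also accurate.
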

\begin{proof}
Suppose $\tilde{r} < \tilde{s}$ so $\tilde{r}\neq\tilde{s}$. In this
case there is a coordinate, say $i_1$ for simplicity, with
$p_{a_1\/i_1} < p_{b_1\/i_1}$.

Suppose that $b_1 = a_h \in \{a_1,\ldots,a_{\ell}\}$. We first
note that $h\neq 1$, since if $h=1$, then 
$p_{b_1\/i_1} = p_{a_h\/i_1} = p_{a_1\/i_1}$.  
We then have $p_{a_h\/i_h} = p_{b_1\/i_h} > p_{b_h\/i_h}$ since
$\tilde{s}\in P_{\{b_1,\ldots,b_{\ell}\}}$, and so looking at coordinates
$i_1$ and $i_h$ we see that $\tilde{r}$ and $\tilde{s}$ are in this
case incomparable, which is a contradiction. Therefore 
we have that $b_1 \not\in \{a_1,\ldots,a_{\ell}\}$.
 
By definitions of $B_a(C)$ in (\ref{eqn:i-C}) we then have 
$i_j\in B_{a_j}(\{b_1,a_1,\ldots,a_{\ell}\})$ for $1\leq j\leq \ell$
and therefore 
$(i_1,\ldots,i_{\ell})\in B_{a_1}(C')\times\cdots\times B_{a_{\ell}}(C')$
for $C' = \{b_1,a_1,\ldots,a_{\ell}\}\supset \{a_1,\ldots,a_{\ell}\}$ 
contradicting $\tilde{r} = \in P_{\{a_1,\ldots,a_{\ell}\}}$. 
Hence, $\tilde{r} = \tilde{s}$ must hold. This completes the proof.
\end{proof}
\begin{rmk*} We can say a bit more than 
in the above proof: since $Q$ is order-generic there is an
$i_{\ell + 1} \in B_{b_1}(\{b_1,a_1,\ldots,a_{\ell}\})$ and so
\[
\tilde{r}' := \tilde{r} + (p_{b_1\/i_{\ell + 1}} + 1)\tilde{e}_{\ell + 1}
\in P_{C'} = P_{\{b_1,a_1,\ldots,a_{\ell}\}},
\]
or is dominated by an element in $P_{C''}$ where 
$\{b_1,a_1,\ldots,a_{\ell}\}\subseteq C''$. 
\end{rmk*}

The claims of Lemma~\ref{lmm:comparable-l} actually hold in a slightly
more general setting: suppose 
$Q = \{\tilde{p}_1,\ldots,\tilde{p}_k\}$ is
an order-generic antichain in ${\ints}^d$. Suppose also that  
$C\subseteq C'\subseteq [k]$, $\tilde{r}\in P_C$, 
$\tilde{s}'\in P_{C'}$ and $\tilde{r}\leq \tilde{s}'$ in ${\ints}^d$.
In this case we can write $C = \{a_1,\ldots,a_{\ell}\}$ and
$C' = \{b_1,\ldots,b_h\}$ where $\ell\leq h$. If  
$\tilde{r} = (p_{a_1\/i_1} + 1)\tilde{e}_{i_1} + \cdots
+ (p_{a_{\ell}\/i_{\ell}} + 1)\tilde{e}_{i_{\ell}}\in P_C$
and 
$\tilde{s}' = (p_{b_1\/i_1} + 1)\tilde{e}_{i_1} + \cdots +
(p_{b_h\/i_h} + 1)\tilde{e}_{i_h}\in P_{C'}$, then,
since $\tilde{r}\leq\tilde{s}'$ there is a projective image
$\tilde{s}$ of $\tilde{s}'$, which we can assume has the form
$\tilde{s} = (p_{b_1\/i_1} + 1)\tilde{e}_{i_1} + \cdots
+ (p_{b_{\ell}\/i_{\ell}} + 1)\tilde{e}_{i_{\ell}}$, such that 
$\tilde{r}\leq\tilde{s}$. As in the proof 
of the previous Lemma~\ref{lmm:comparable-l}, if $\tilde{r}\neq\tilde{s}$ then
there is a coordinate, say $i_1$ for simplicity, with
$p_{a_1\/i_1} < p_{b_1\/i_1}$. Also, since $\tilde{r} < \tilde{s}$ we 
cannot have $b_1\in\{a_1,\ldots,a_{\ell}\}$ and so 
$C' = \{b_1,a_1,\ldots,a_{\ell}\}\subset \{a_1,\ldots,a_{\ell}\}$.
Since $i_j\in B_{a_j}(C')$ for $1\leq j\leq \ell$ we also get in
this case a contradiction to the fact that $\tilde{r}\in P_C$.
We summarize in the following Lemma.
\begin{lemma}
\label{lmm:comparable-l++}
Let $Q = \{\tilde{p}_1,\ldots,\tilde{p}_k\}$ be an order-generic antichain 
in ${\ints}^d$ and suppose $C\subseteq C'\subseteq [k]$, $\tilde{r}\in P_C$ and
$\tilde{s}'\in P_{C'}$. If $\tilde{r}\leq \tilde{s}'$,
then $\tilde{r} = \tilde{s}$ where $\tilde{s}$ is a projective
image of $\tilde{s}'$, and this can only occur when $C' = C$
and $\tilde{s}' = \tilde{s} = \tilde{r}$.
\end{lemma}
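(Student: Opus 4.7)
The plan is to formalise and extend the sketch preceding the lemma statement. Writing $\ell = |C| \le h = |C'|$ and $C = \{a_1,\dots,a_\ell\}$, $C' = \{b_1,\dots,b_h\}$, I would relabel so that the first $\ell$ coordinates of $\tilde s'$ coincide with the $\ell$ coordinates of $\tilde r$; this is possible because $\tilde r \le \tilde s'$ and the nonzero entries of $\tilde r$ are all strictly positive, so the support of $\tilde r$ must be contained in the support of $\tilde s'$. Thus
\[
\tilde r = \sum_{j=1}^{\ell} (p_{a_j/i_j} + 1)\tilde e_{i_j}, \qquad \tilde s' = \sum_{j=1}^{h} (p_{b_j/i_j} + 1)\tilde e_{i_j},
\]
and I would define the projective image $\tilde s := \sum_{j=1}^{\ell}(p_{b_j/i_j} + 1)\tilde e_{i_j}$, noting that $\tilde r \le \tilde s$ by construction.

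The first task is to verify $\tilde r = \tilde s$ by contradiction, as indicated in the paragraph preceding the lemma. Assume some coordinate, say $i_1$, satisfies $p_{a_1/i_1} < p_{b_1/i_1}$. If $b_1 \in C$, say $b_1 = a_t$ with $t \ne 1$, then $i_1 \in B_{b_1}(C') = B_{a_t}(C')$ together with $a_1 \in C' \setminus \{a_t\}$ yields $p_{a_t/i_1} < p_{a_1/i_1}$, contradicting $p_{a_1/i_1} < p_{a_t/i_1}$. If instead $b_1 \notin C$, set $C'' := C \cup \{b_1\} \supsetneq C$ and verify $(i_j)_{j=1}^{\ell} \in \prod_{t \in C} B_t(C'')$: for $j = 1$ the required inequality $p_{a_1/i_1} < p_{b_1/i_1}$ is the standing assumption, and for $j > 1$ one chains $p_{a_j/i_j} \le p_{b_j/i_j}$ (from $\tilde r \le \tilde s$) with $p_{b_j/i_j} < p_{b_1/i_j}$ (from $i_j \in B_{b_j}(C')$ and $b_1 \in C' \setminus \{b_j\}$). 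This contradicts the defining condition $\tilde r \in P_C$ in~(\ref{eqn:P_C}), so $\tilde r = \tilde s$.

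Having established $\tilde r = \tilde s$, I would next show $a_j = b_j$ for $1 \le j \le \ell$: if $a_j \ne b_j$, then $a_j \in C' \setminus \{b_j\}$ and $i_j \in B_{b_j}(C')$ would force $p_{b_j/i_j} < p_{a_j/i_j}$, contradicting the now-established equality $p_{a_j/i_j} = p_{b_j/i_j}$. To conclude $C = C'$, suppose toward contradiction that some $b^* \in C' \setminus C$ exists; since $a_j = b_j \in C'$ and $i_j \in B_{a_j}(C')$ for every $j \le \ell$, we obtain $p_{a_j/i_j} < p_{b^*/i_j}$. Combined with $i_j \in B_{a_j}(C)$ this gives $(i_1, \dots, i_\ell) \in \prod_{t \in C} B_t(C \cup \{b^*\})$ with $C \cup \{b^*\} \supsetneq C$, again violating $\tilde r \in P_C$. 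Hence $C = C'$, $h = \ell$, and $\tilde s' = \tilde s = \tilde r$.

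The main obstacle is the bookkeeping in the middle paragraph: the strict inequality $p_{a_j/i_j} < p_{b_1/i_j}$ for $j > 1$ is not given directly and must be assembled by combining the coordinate-wise inequality $\tilde r \le \tilde s$ with the $B_{b_j}(C')$-condition supplied by $\tilde s' \in P_{C'}$. Once this chaining is noticed, the rest is a clean repetition of the same pattern, first to force $\tilde r = \tilde s$, then to force $C = C'$.
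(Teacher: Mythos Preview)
Your proposal is correct and follows essentially the same approach as the paper's discussion preceding the lemma: set up the projective image $\tilde{s}$, argue by contradiction that $\tilde{r}=\tilde{s}$ via the chained inequality $p_{a_j\,i_j}\le p_{b_j\,i_j}<p_{b_1\,i_j}$, and exploit the exclusion clause in the definition of $P_C$. Your write-up is in fact more complete than the paper's: you handle the case $b_1\in C$ with a direct one-line contradiction (using $i_1\in B_{b_1}(C')$) rather than the incomparability detour from Lemma~\ref{lmm:comparable-l}, and you supply the missing second half --- deducing $a_j=b_j$ and then $C=C'$ --- which the paper's paragraph asserts in the lemma statement but never argues.
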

By Lemmas~\ref{lmm:comparable-l} and~\ref{lmm:comparable-l++} we have
in particular that the union $\bigcup_{\emptyset\neq C\subseteq [k]}P_C$
is an antichain in ${\ints}^d$. We therefore have the following.
\begin{proposition}
\label{prp:type k}
For an order-generic antichain $Q = \{\tilde{p}_1,\ldots,\tilde{p}_k\}$
in ${\ints}^d$ we have 
\begin{equation}
\label{eqn:union}
S_u(D(Q_*(-\tilde{1},\tilde{b}))) = \bigcup_{\emptyset\neq C\subseteq [k]}P_C,
\end{equation}
where the sets $P_C$ are as in (\ref{eqn:P_C}).
\end{proposition}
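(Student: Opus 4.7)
The plan is to establish both inclusions of the asserted equality. Since Lemmas~\ref{lmm:comparable-l} and~\ref{lmm:comparable-l++} already guarantee that $\bigcup_{\emptyset\neq C\subseteq[k]}P_C$ is an antichain in ${\ints}^d$, what remains is the set-theoretic equality with $S_u(D(Q_*(-\tilde{1},\tilde{b})))$. I would prove each inclusion separately.

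For the forward containment, I would take $\tilde{r}\in S_u(D(Q_*(-\tilde{1},\tilde{b})))$ and apply the dual part of Claim~\ref{clm:d-elts}: $\tilde{r}$ is determined by a choice of $d$ distinct generators of the downset, which I partition as $\{\tilde{p}_c\}_{c\in C}\cup\{\tilde{b}(j)\}_{j\in J}$ with $|C|+|J|=d$, setting $\mathcal{I}:=[d]\setminus J$. Lemma~\ref{lmm:p1-pk-box} together with Observation~\ref{obs:p1-pk-box} then produces a bijection $\beta:C\to\mathcal{I}$ with $\beta(c)\in B_c(C)$ for every $c\in C$ and yields the expression $\tilde{r}=\sum_{c\in C}(p_{c\/\beta(c)}+1)\tilde{e}_{\beta(c)}$, so $\tilde{r}$ already has the shape required for $P_C$. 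The exclusion $(\beta(c))_{c\in C}\notin\prod_{c\in C}B_c(C')$ for any $C'\supsetneq C$ then follows because otherwise, picking any $b\in C'\setminus C$, one would have $p_{c\/\beta(c)}+1\leq p_{b\/\beta(c)}$ for all $c\in C$ and $\tilde{r}_j=0\leq p_{b\/j}$ on the complementary coordinates, forcing $\tilde{r}\leq\tilde{p}_b\in D(Q_*(-\tilde{1},\tilde{b}))$ and contradicting $\tilde{r}\in S_u$.

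For the reverse containment, I would fix $C\subseteq[k]$ and $\tilde{r}\in P_C$, written $\tilde{r}=\sum_{c\in C}(p_{c\/i_c}+1)\tilde{e}_{i_c}$, and check directly that $\tilde{r}\notin D(Q_*(-\tilde{1},\tilde{b}))$ while $\tilde{r}-\tilde{e}_j\in D(Q_*(-\tilde{1},\tilde{b}))$ for every $j\in[d]$. The first condition splits into three sub-cases: $\tilde{r}\leq\tilde{p}_a$ with $a\in C$ would demand $p_{a\/i_a}+1\leq p_{a\/i_a}$, which is impossible; $\tilde{r}\leq\tilde{p}_a$ with $a\notin C$ is ruled out by the exclusion clause in~(\ref{eqn:P_C}) applied to $C'=C\cup\{a\}$; and $\tilde{r}\leq\tilde{b}(j)$ fails at coordinate $j$ since $\tilde{r}_j\geq 0>-1$. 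For the second condition, when $j=i_c$ the defining property $i_{c'}\in B_{c'}(C)$ for each other $c'\in C$ yields $p_{c'\/i_{c'}}<p_{c\/i_{c'}}$ and hence $\tilde{r}-\tilde{e}_{i_c}\leq\tilde{p}_c$, while when $j\notin\{i_c\}_{c\in C}$ the inequality $\tilde{r}-\tilde{e}_j\leq\tilde{b}(j)$ is immediate from $\tilde{b}\geq\tilde{M}$ componentwise.

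The main obstacle I anticipate is bookkeeping in the forward direction: several different $d$-subsets of $Q_*(-\tilde{1},\tilde{b})$ can \emph{a priori} yield the same $\tilde{r}$, so one must argue that $\tilde{r}$ is placed in the $P_C$ whose label set $C$ is inclusion-maximal among admissible ones, and verify that this is precisely what the set-difference in~(\ref{eqn:P_C}) enforces. Lemma~\ref{lmm:comparable-l++} is tailored to this disambiguation and in fact shows that the union in~(\ref{eqn:union}) is disjoint, thereby completing the proof.
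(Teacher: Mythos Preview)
Your argument is correct and in fact more explicit than the paper's. The paper does not verify the two containments separately; instead it notes (just before the proposition) that $S_u(D(Q_*(-\tilde{1},\tilde{b})))\subseteq\bigcup_{C}P_C$ ``clearly'' holds, then invokes Lemmas~\ref{lmm:comparable-l} and~\ref{lmm:comparable-l++} to conclude that the union is an antichain, and deduces equality implicitly from the principle that an antichain sitting inside the complement of $D(Q_*)$ and containing $S_u$ must equal $S_u$. Your route bypasses that principle entirely: you check directly that every $\tilde r\in P_C$ satisfies $\tilde r\notin D(Q_*)$ and $\tilde r-\tilde e_j\in D(Q_*)$ for each $j$, which is a cleaner, more self-contained verification and does not actually lean on the antichain lemmas at all (so your opening sentence slightly overstates their role). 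The paper's approach has the advantage of making the antichain property the conceptual centerpiece; yours has the advantage of being a straight computation that could in principle be reused without the order-generic hypothesis.

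One correction: your closing remark that Lemma~\ref{lmm:comparable-l++} ``in fact shows that the union in~(\ref{eqn:union}) is disjoint'' is false. The paper explicitly notes in the remarks following Proposition~\ref{prp:type k} that the union is \emph{not} in general disjoint, and Claim~\ref{clm:Puv-Puw} exhibits concrete coincidences between $P_{\{u,v\}}$ and $P_{\{u,w\}}$ when $k=3$. This does not damage your proof of the equality, but you should drop that sentence. Likewise, your worry about needing to select the inclusion-maximal $C$ is unnecessary: your forward-direction argument already shows that \emph{any} $d$-subset coming from Claim~\ref{clm:d-elts} lands $\tilde r$ in the corresponding $P_C$, because the exclusion clause is forced by $\tilde r\notin D(Q_*)$.
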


\begin{rmks*}\ 
\begin{enumerate}[label=(\roman*)]
\item Note that (\ref{eqn:union}) does
not hold when $Q$ is not order-generic, since the union on 
the right is not in general an antichain. 
\item Although the union on the right in (\ref{eqn:union}) is not, 
in general, disjoint, it does yield an inclusion/exclusion-like formula for
explicit enumeration for an order-generic $Q$.
This will be demonstrated here below for the case $k=3$.
\end{enumerate}
\end{rmks*}

\subsection*{The case when $k=3$}
Here we assume that our order-generic antichain 
$Q = \{\tilde{p}_1,\tilde{p}_2,\tilde{p}_3\}\subseteq {\ints}_0^d$
contains three points. This case, as we will see, is considerably
more complicated than the case $k=2$. 

For distinct $i_1,i_2,i_2\in [d]$ and
\[
D = D(\tilde{p}_1,\tilde{p}_2,\tilde{p}_3,\tilde{b}(1),\ldots,
\widehat{\tilde{b}(i_1)},\ldots,
\widehat{\tilde{b}(i_2)},\ldots,
\widehat{\tilde{b}(i_3)},\ldots,
\tilde{b}(d))
\]
we have by Corollary~\ref{cor:p1-pk-box} that 
\[
S_u(D) = \{(p_{\alpha(1)\/i_1}+1)\tilde{e}_{i_1} 
+ (p_{\alpha(2)\/i_2}+1)\tilde{e}_{i_2} 
+ (p_{\alpha(3)\/i_3}+1)\tilde{e}_{i_3}\},
\]
if there is a unique permutation $\alpha\in S_3$ with
$p_{\alpha(1)\/i_1} < p_{\alpha(2)\/i_1}, p_{\alpha(3)\/i_1}$;
$p_{\alpha(2)\/i_2} < p_{\alpha(1)\/i_2}, p_{\alpha(3)\/i_2}$; and
$p_{\alpha(3)\/i_3} < p_{\alpha(1)\/i_3}, p_{\alpha(2)\/i_3}$
and $S_u(D) = \emptyset$ otherwise.

For $\{u,v,w\} = \{1,2,3\}$ we will write the parts of $[d]$ as defined
in (\ref{eqn:partk}) in the following way:
\begin{eqnarray*}
A_{u,v,w} & = & \{i\in [d] : p_{u\/i} < p_{v\/i} < p_{w\/i}\}, \\
A_{uv,w} & = & \{i\in [d] : p_{u\/i} = p_{v\/i} < p_{w\/i}\}, \\
A_{u,vw} & = & \{i\in [d] : p_{u\/i} < p_{v\/i} = p_{w\/i}\},
\end{eqnarray*}
and 
$A_{123} = \{i\in [d] : p_{1\/i} = p_{2\/i} = p_{3\/i}\}$.
In this way our pseudo-partition of $[d]$ from 
in (\ref{eqn:partk}) becomes:
\begin{eqnarray}
[d] & = & A_{123}\cup A_{12,3}\cup A_{13,2}\cup A_{23,1}\cup 
A_{1,23}\cup A_{2,13}\cup A_{3,12} \nonumber \\ 
& \cup & A_{1,2,3}\cup A_{1,3,2}\cup A_{2,1,3}\cup
A_{2,3,1}\cup A_{3,1,2}\cup A_{3,2,1}. \label{eqn:part3}
\end{eqnarray}
As $S_u(Q_*(-\tilde{1},\tilde{b})$ is the 
antichain formed by the maximal elements of all
the $\binom{d+3}{d} = \binom{d+3}{3}$ possible singletons 
$S_u(D)$, where $D\subseteq Q_*(-\tilde{1},\tilde{b})$
contains exactly $d$ elements, then there are 
three types of points in $S_u(Q_*(-\tilde{1},\tilde{b})$:
those ones obtained from a $d$-set $D$ that contains all
the three points of $Q = \{\tilde{p}_1,\tilde{p}_2,\tilde{p}_3\}$,
those obtained from a $D$ that contain exactly two points
of $Q$, and finally those obtained from a $D$ that contain
exactly one of $\tilde{p}_1$, $\tilde{p}_2$ or $\tilde{p}_3$.

In order to facilitate our notation and presentation we define
the following subsets of $[d]$, where $\{u,v,w\} = \{1,2,3\}$:
\begin{eqnarray*}
%B_{\langle u\rangle} & := & B_u([3])=A_{u,v,w}\cup A_{u,w,v}\cup A_{u,vw}, \\
B_{\langle u,v\rangle}  & := & A_{uw,v} \cup A_{w,u,v}, \\
B^{\langle u\rangle}    & := & A_{v,w,u} \cup A_{w,v,u} \cup A_{vw,u}, \\ 
B^{\langle u*\rangle}   & := & A_{w,uv} \cup A_{v,uw},
\end{eqnarray*}
%that is, $B_a$ contains all the coordinates $i\in [d]$ 
%where the sole minimum among among $p_{u\/i},p_{v\/i}$ and
%$p_{w\/i}$ is $p_{u\/i}$, 
that is $B_{\langle u,v\rangle}$ contains all the coordinates
$i\in [d]$ where $p_{u\/i} < p_{v\/i}$ and $p_{u\/i}$ 
is not the sole minimum, $B^{\langle u\rangle}$ contains all the coordinates 
$i\in [d]$ where $p_{u\/i}$ is the sole maximum among $p_{u\/i},p_{v\/i}$ 
and $p_{w\/i}$ and $B^{\langle u*\rangle}$ contains
those coordinates $i$ where $p_{u\/i}$ and exactly one of the other
two, $p_{v\/i}$ or $p_{w\/i}$, are the maximum ones.

By (\ref{eqn:P_1...k}) the set of points in 
$S_u(Q_*(-\tilde{1},\tilde{b})$ that are formed by 
the sets $S_u(D)$ where $D\subseteq Q_*(-\tilde{1},\tilde{b})$
contain all the three points of $Q = \{\tilde{p}_1,\tilde{p}_2,\tilde{p}_3\}$,
is then given by
\begin{equation}
\label{eqn:P123}
P_{[3]} = \{(p_{1\/i_1}+1)\tilde{e}_{i_1} + (p_{2\/i_2}+1)\tilde{e}_{i_2} 
+ (p_{3\/i_3}+1)\tilde{e}_{i_3} : (i_1,i_2,i_3)
\in B_1([3])\times B_2([3])\times B_3([3])\}.
\end{equation}

When $\{u,v,w\} = \{1,2,3\}$ then we have 
\[
 B_u(\{u,v\})\times B_v(\{u,v\})\setminus B_u([3])\times B_v([3])
= B_{\langle u,v\rangle}\times B_{\langle v,u\rangle} 
\cup B_{\langle u,v\rangle}\times B_v([3]) 
\cup B_u([3])\times B_{\langle u,v\rangle}
\]
and hence the set $P_{\hat{w}} = P_{u\/v}$ of points in 
$S_u(Q_*(-\tilde{1},\tilde{b})$ formed by the sets $S_u(D)$ where 
$D$ contains exactly two points $\tilde{p}_u$ and $\tilde{p}_v$ are 
by (\ref{eqn:Pi(1hatjk)}) given by
\begin{equation}
\label{eqn:Puv}
P_{\{u,v\}} = \{(p_{u\/i}+1)\tilde{e}_i + (p_{v\/j}+1)\tilde{e}_j 
: (i,j)\in 
B_{\langle u,v\rangle}\times B_{\langle v,u\rangle} 
\cup B_{\langle u,v\rangle}\times B_v([3]) 
\cup B_u([3])\times B_{\langle u,v\rangle}
\}.
\end{equation}

Lastly, the set of points in $S_u(Q_*(-\tilde{1},\tilde{b})$ formed by the 
sets $S_u(D)$ where $D$ contains exactly one point $\tilde{p}_i\in Q$ 
is given by
\begin{equation}
\label{eqn:Pu}
P_{\{u\}} = \{(p_{u\/i}+1)\tilde{e}_i : 
i \in B^{\langle u\rangle}\cup B^{\langle u*\rangle} \cup A_{123} \}.
\end{equation}
Note that for $\{u,v,w\} = \{1,2,3\}$ we have
\begin{eqnarray*}
&      & \left(B_{\langle u,v\rangle}\times B_{\langle v,u\rangle} 
         \cup B_{\langle u,v\rangle}\times B_v([3]) 
         \cup B_u([3])\times B_{\langle u,v\rangle}\right) \\
& \cap & \left(B_{\langle u,w\rangle}\times B_{\langle w,u\rangle} 
         \cup B_{\langle u,w\rangle}\times B_v([3]\right) 
         \cup B_u([3])\times B_{\langle u,w\rangle}) \\
&  =   & B_{\langle u\rangle}\times A_{vw,u}
\end{eqnarray*}
and hence by Lemma~\ref{lmm:comparable-l} we have the following.
\begin{claim}
\label{clm:Puv-Puw}
Two points $\tilde{r}\in P_{\{u,v\}}$ and $\tilde{s}\in P_{\{u,w\}}$
are comparable in ${\ints}_0^d$ if and only they are equal in which
case $\tilde{r} = \tilde{s} = (p_{u\/i}+1)\tilde{e}_i +(p_{v\/j}+1)\tilde{e}_j$,
where $(i,j) \in B_{\langle u\rangle}\times A_{vw,u}$.
\end{claim}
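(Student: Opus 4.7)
The plan is to apply Lemma~\ref{lmm:comparable-l} with $\ell=2$, together with a short case analysis on how the two-element supports of $\tilde{r}$ and $\tilde{s}$ line up, and then to read off the equality conditions from the explicit description (\ref{eqn:Puv}) of $P_{\{u,v\}}$ and $P_{\{u,w\}}$.

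First, every element of $P_{\{u,v\}}$ (resp.~$P_{\{u,w\}}$) has support of cardinality exactly two with strictly positive coefficients on those coordinates. If $\tilde{r}\leq \tilde{s}$ (or conversely), the two supports must be nested and, since they share a cardinality, must coincide; call the common support $\{i,j\}$. Write $\tilde{r}=(p_{u\/i}+1)\tilde{e}_i+(p_{v\/j}+1)\tilde{e}_j$ and reindex $\tilde{s}$ so that it uses the same pair $\{i,j\}$. Exactly one of two situations occurs: either $\tilde{s}=(p_{u\/i}+1)\tilde{e}_i+(p_{w\/j}+1)\tilde{e}_j$ (same pairing), or $\tilde{s}=(p_{u\/j}+1)\tilde{e}_j+(p_{w\/i}+1)\tilde{e}_i$ (opposite pairing).

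In the same-pairing case, Lemma~\ref{lmm:comparable-l} applied with $(a_1,a_2)=(u,v)$, $(b_1,b_2)=(u,w)$ and $(i_1,i_2)=(i,j)$ immediately forces $\tilde{r}=\tilde{s}$, and matching coefficients on $\tilde{e}_j$ yields $p_{v\/j}=p_{w\/j}$. The strict inequalities built into membership in $P_{\{u,v\}}$ and $P_{\{u,w\}}$, namely $p_{u\/i}<p_{v\/i}$, $p_{u\/i}<p_{w\/i}$ and $p_{v\/j}<p_{u\/j}$, $p_{w\/j}<p_{u\/j}$, then place $i\in B_u([3])=B_{\langle u\rangle}$ and $j\in A_{vw,u}$, as required. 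In the opposite-pairing case, Lemma~\ref{lmm:comparable-l} (applied after the obvious relabeling of the $b_t$'s) would force $p_{v\/j}=p_{u\/j}$, contradicting the strict inequality $p_{v\/j}<p_{u\/j}$ needed for $\tilde{r}\in P_{\{u,v\}}$; so this case is impossible.

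For the converse direction, given $(i,j)\in B_{\langle u\rangle}\times A_{vw,u}$ the point $(p_{u\/i}+1)\tilde{e}_i+(p_{v\/j}+1)\tilde{e}_j$ lies in $P_{\{u,v\}}$ because $i\in B_u([3])$ and $j\in A_{vw,u}\subseteq B_{\langle v,u\rangle}$, and by the symmetric argument (using $p_{v\/j}=p_{w\/j}$) it also lies in $P_{\{u,w\}}$, so the stated form is indeed realized.

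The main obstacle is the bookkeeping in the case split: one must be careful not to confuse $B_{\langle u,v\rangle}$ with its mirror $B_{\langle v,u\rangle}$ when translating the opposite-pairing alignment into strict inequalities, and the intersection computation appearing just before the claim statement is precisely the coordinate-level manifestation of this bookkeeping. Once that is handled, both directions reduce to reading the definitions (\ref{eqn:Puv}) and invoking Lemma~\ref{lmm:comparable-l}.
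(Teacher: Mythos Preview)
Your argument is correct and tracks the paper's own proof closely. The paper's version is terser: it records the intersection of the two index sets parametrizing $P_{\{u,v\}}$ and $P_{\{u,w\}}$ (in the ``same pairing'' orientation) as $B_{\langle u\rangle}\times A_{vw,u}$ and then simply cites Lemma~\ref{lmm:comparable-l}; your explicit split into the same-pairing and opposite-pairing cases, together with the observation that opposite pairing forces $p_{v\/j}=p_{u\/j}$ and hence violates $j\in B_v(\{u,v\})$, spells out what that citation leaves implicit.
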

This claim together with (\ref{eqn:Puv}) makes it possible to enumerate the 
maximal elements in the union $P_{\{u,v\}}\cup P_{\{u,w\}}$ in a direct manner.

By symmetry and the above Claim~\ref{clm:Puv-Puw} we have that
if two points in $P_{\{u,v\}}$ and $P_{\{v,w\}}$ respectively are comparable,
then the must be equal, say to 
$(p_{u\/i}+1)\tilde{e}_{i}+(p_{v\/j}+1)\tilde{e}_{j}$
where $(i,j) \in A_{uw,v}\times B_{\langle v\rangle}$. Since 
$(B_{\langle u\rangle}\times A_{vw,u})\cap (A_{uw,v}\times B_{\langle v\rangle}) 
= \emptyset$
we have, in particular, that no three points in $P_{\{u,v\}}$, $P_{\{u,w\}}$
and $P_{\{v,w\}}$ respectively, are pairwise comparable in ${\ints}_0^d$.

Lastly, consider the sets $P_{\{1\}}$, $P_{\{2\}}$ and $P_{\{3\}}$ 
from (\ref{eqn:Pu}). Since for any distinct $u,v\in \{1,2,3\}$ we have
\[
(B^{\langle u\rangle}\cup B^{\langle u*\rangle} \cup A_{123})\cap
(B^{\langle v\rangle}\cup B^{\langle v*\rangle} \cup A_{123})
= A_{w,uv}\cup A_{123},
\]
then clearly two comparable elements in $P_{\{u\}}$ and $P_{\{v\}}$ respectively
must be equal. Also note that 
$P_{\{1\}}\cap P_{\{2\}} \cap P_{\{3\}} 
= \{(p_{u\/i}+1)\tilde{e}_i : i\in A_{123} \}$.

\begin{convention*}
For an upper case letter $X \in \{A,B\}$ we denote
the cardinality of $X_*$, $X^*$ by the corresponding lower case
boldface letter ${\mathbf{x}}_*$ and ${\mathbf{x}}^*$ respectively, 
so $|A_{12,3}| = {\mathbf{a}}_{12,3}$,
$|B^{\langle 2*\rangle}| = {\mathbf{b}}^{\langle 2*\rangle}$ etc. 
\end{convention*}
With this convention we can now list and
also enumerate the elements in $S_u(Q_*(-\tilde{1},\tilde{b})$ 
by the inclusion/exclusion principle in the following.
\begin{proposition}
\label{prp:type 3}
For an order-generic antichain 
$Q = \{\tilde{p}_1,\tilde{p}_2,\tilde{p}_3\}$ of ${\ints}_0^d$
and the corresponding pseudo-partition of $[d]$
as in (\ref{eqn:part3}),
we have 
\[
S_u(D(Q_*(-\tilde{1},\tilde{b}))) 
= P_{\{1,2,3\}} \cup P_{\{1,2\}} \cup P_{\{1,3\}} \cup P_{\{2,3\}} 
\cup P_{\{1\}} \cup P_{\{2\}} \cup P_{\{3\}},
\]
where $P_{\{1,2,3\}}$, $P_{\{u,v\}}$ and each $P_{\{u\}}$ are as in 
(\ref{eqn:P123}), (\ref{eqn:Puv}) and (\ref{eqn:Pu}) respectively. 

Further, the cardinality is given by
\begin{eqnarray*}
  \lefteqn{|S_u(D(Q_*(-\tilde{1},\tilde{b})))| =
            {\mathbf{b}}_1([3]){\mathbf{b}}_2([3]){\mathbf{b}}_3([3])
          + {\mathbf{b}}_{\langle 1,2\rangle}{\mathbf{b}}_{\langle 2,1\rangle}
          + {\mathbf{b}}_{\langle 1,2\rangle}{\mathbf{b}}_{\langle 2\rangle} 
          + {\mathbf{b}}_{\langle 1\rangle}{\mathbf{b}}_{\langle 2,1\rangle}} \\
& + &       {\mathbf{b}}_{\langle 1,3\rangle}{\mathbf{b}}_{\langle 3,1\rangle}
          + {\mathbf{b}}_{\langle 1,3\rangle}{\mathbf{b}}_{\langle 3\rangle} 
          + {\mathbf{b}}_{\langle 1\rangle}{\mathbf{b}}_{\langle 3,1\rangle} 
          + {\mathbf{b}}_{\langle 2,3\rangle}{\mathbf{b}}_{\langle 3,2\rangle} 
          + {\mathbf{b}}_{\langle 2,3\rangle}{\mathbf{b}}_{\langle 3\rangle}
          + {\mathbf{b}}_{\langle 2\rangle}{\mathbf{b}}_{\langle 3,2\rangle} \\
& - &       ({\mathbf{b}}_{\langle 1\rangle}{\mathbf{a}}_{23,1}
          + {\mathbf{b}}_{\langle 2\rangle}{\mathbf{a}}_{13,2}
          + {\mathbf{b}}_{\langle 3\rangle}{\mathbf{a}}_{12,3}) 
          + {\mathbf{b}}^{\langle 1\rangle} 
          + {\mathbf{b}}^{\langle 2\rangle} 
          + {\mathbf{b}}^{\langle 3\rangle}
          + {\mathbf{a}}_{3,12}
          + {\mathbf{a}}_{2,13} + {\mathbf{a}}_{1,23} + {\mathbf{a}}_{123}.
\end{eqnarray*}
\end{proposition}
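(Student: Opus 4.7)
The set equality is immediate from Proposition~\ref{prp:type k} specialized to $k=3$, since the nonempty subsets of $[3]$ are exactly the seven sets $\{1,2,3\}$, $\{1,2\}$, $\{1,3\}$, $\{2,3\}$, $\{1\}$, $\{2\}$, $\{3\}$. For the cardinality, the plan is to combine a coarse ``support-size'' decomposition with inclusion--exclusion on each level. Since every element of $P_{\{1,2,3\}}$ has support of size $3$, of each $P_{\{u,v\}}$ has support of size $2$, and of each $P_{\{u\}}$ has support of size $1$, the seven sets partition into three disjoint levels, so
\[
|S_u(D(Q_*(-\tilde 1,\tilde b)))|
= |P_{[3]}|
+ \bigl|P_{\{1,2\}}\cup P_{\{1,3\}}\cup P_{\{2,3\}}\bigr|
+ \bigl|P_{\{1\}}\cup P_{\{2\}}\cup P_{\{3\}}\bigr|.
\]

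For the top level I will apply~(\ref{eqn:P123}) directly as a cartesian product to get ${\mathbf b}_1([3]){\mathbf b}_2([3]){\mathbf b}_3([3])$. For the middle level, the defining union in~(\ref{eqn:Puv}) is visibly disjoint, since $B_{\langle u,v\rangle}$ and $B_u([3])$ are disjoint in $[d]$ and likewise on the second factor, so summing $|P_{\{u,v\}}|$ over the three unordered pairs produces the six pair-product terms in the formula (identifying $\mathbf b_{\langle u\rangle}$ with $\mathbf b_u([3])$). Then inclusion--exclusion on the middle level uses Claim~\ref{clm:Puv-Puw} to identify each pairwise intersection with $B_{\langle u\rangle}\times A_{vw,u}$, contributing the subtracted terms $\mathbf b_{\langle u\rangle}\mathbf a_{vw,u}$, while the observation immediately following Claim~\ref{clm:Puv-Puw} rules out any simultaneous triple overlap, so the expansion truncates after the pairwise step.

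For the bottom level, equation~(\ref{eqn:Pu}) gives $|P_{\{u\}}|=\mathbf b^{\langle u\rangle}+\mathbf b^{\langle u*\rangle}+\mathbf a_{123}$ with the three pieces visibly disjoint, and a direct enumeration will show $\sum_u \mathbf b^{\langle u*\rangle} = 2(\mathbf a_{3,12}+\mathbf a_{2,13}+\mathbf a_{1,23})$, since each ``two-tie'' class $A_{w,uv}$ contributes to exactly two of the $B^{\langle\cdot *\rangle}$'s. For the pairwise intersections, an element of $P_{\{u\}}\cap P_{\{v\}}$ must live at a coordinate $i$ with $p_{u\/i}=p_{v\/i}$; since $B^{\langle u\rangle}$ demands $p_{u\/i}$ to be the sole maximum, this rules out any $B^{\langle\cdot\rangle}$-contribution and leaves $|P_{\{u\}}\cap P_{\{v\}}|=\mathbf a_{w,uv}+\mathbf a_{123}$, while the triple intersection is exactly $A_{123}$. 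Feeding these counts into inclusion--exclusion, the coefficient of each $\mathbf a_{w,uv}$ works out to $2-1=+1$ and that of $\mathbf a_{123}$ to $3-3+1=+1$, yielding the perhaps surprising positive contributions at the end of the formula. The main obstacle is really bookkeeping, tracking which $A_\omega$ lies in which of $B_{\langle u,v\rangle}$, $B_u([3])$, $B^{\langle u\rangle}$, and $B^{\langle u*\rangle}$, and verifying that the inclusion--exclusion cancellations reproduce exactly the signs claimed; all structural input needed is already in hand via Proposition~\ref{prp:type k}, equations~(\ref{eqn:P123})--(\ref{eqn:Pu}), and Claim~\ref{clm:Puv-Puw}.
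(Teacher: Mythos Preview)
Your proposal is correct and follows essentially the same approach as the paper. The paper's argument is the discussion immediately preceding the proposition: it invokes Proposition~\ref{prp:type k} for the union, computes each $P_C$ via (\ref{eqn:P123})--(\ref{eqn:Pu}), uses Claim~\ref{clm:Puv-Puw} and the remark after it for the middle-level overlaps, computes the bottom-level intersections $P_{\{u\}}\cap P_{\{v\}}$ and $P_{\{1\}}\cap P_{\{2\}}\cap P_{\{3\}}$ exactly as you do, and then states the cardinality as following by inclusion--exclusion; your support-size separation of the three levels is a clean way to make explicit what the paper leaves implicit.
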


\begin{rmk*}
Unlike the presentation in Proposition~\ref{prp:type 2}
and Corollary~\ref{cor:mon-type 2} the presentation
for the antichain $S_u(D(Q_*(-\tilde{1},\tilde{b})))$ as a union in 
Proposition~\ref{prp:type 3} 
is symmetric, however it is not a disjoint union, as that would be 
quite convoluted and confusing.
\end{rmk*}

\begin{corollary}
\label{cor:mon-type 3}
For an order-generic antichain
$Q = \{\tilde{p}_1,\tilde{p}_2,\tilde{p}_3\}$ of ${\ints}_0^d$, the
unique local zero dimensional type 3 monomial ideal $I$ of
$R = K[x_1,\ldots,x_d]$ with
$\doc(I) =
\{\tilde{x}^{\tilde{p}_1}, \tilde{x}^{\tilde{p}_2}, \tilde{x}^{\tilde{p}_3}\}$
is given by
\[
I = (\tilde{x}^{\tilde{r}} : \tilde{r} \in S_u(D(Q_*(-\tilde{1},\tilde{b})))\}
\]
where $S_u(D(Q_*(-\tilde{1},\tilde{b})))$ is as in
Proposition~\ref{prp:type 3}.
\end{corollary}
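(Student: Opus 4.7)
The plan is to treat this as a straightforward assembly of two previously established results: Proposition~\ref{prp:retrieve}, which gives existence and uniqueness of a zero-dimensional monomial ideal with prescribed $\doc$, and Proposition~\ref{prp:type 3}, which gives the explicit combinatorial description of the generating antichain in the case $k=3$. No new geometric or algebraic input should be required.

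First I would observe that by the definition of type $k$ (given at the end of Section~\ref{sec:zero-monomial}), a local zero-dimensional monomial ideal $I$ of $R$ is of type $3$ precisely when $|\doc(I)|=3$, so the hypothesis $\doc(I)=\{\tilde{x}^{\tilde{p}_1},\tilde{x}^{\tilde{p}_2},\tilde{x}^{\tilde{p}_3}\}$ with $Q=\{\tilde{p}_1,\tilde{p}_2,\tilde{p}_3\}$ an antichain in ${\nats}_0^d$ is exactly the type-3 zero-dimensional condition. Applying Proposition~\ref{prp:retrieve} to the set $S=\{\tilde{x}^{\tilde{p}_1},\tilde{x}^{\tilde{p}_2},\tilde{x}^{\tilde{p}_3}\}$ of non-comparable monomials then yields existence and uniqueness of such an ideal $I$.

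Next I would unpack the construction that precedes Proposition~\ref{prp:retrieve}. There, for an antichain $Q\subseteq {\nats}_0^d$ corresponding to $\doc(I)$, the unique zero-dimensional monomial ideal $I$ with $\doc(I)$ equal to the image of $Q$ is the ideal corresponding to the upset $U\bigl(S_u(D(Q_*(-\tilde{1},\tilde{b})))\bigr)$ for any $\tilde{b}\geq\tilde{M}$; equivalently, its minimal monomial generators are exactly $\{\tilde{x}^{\tilde{r}}:\tilde{r}\in S_u(D(Q_*(-\tilde{1},\tilde{b})))\}$. Combined with the uniqueness clause of Proposition~\ref{prp:retrieve}, this identifies the ideal in the statement with the one produced by Proposition~\ref{prp:retrieve}.

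Finally, since $Q$ is assumed order-generic with $|Q|=3$, Proposition~\ref{prp:type 3} applies verbatim and supplies the explicit decomposition of $S_u(D(Q_*(-\tilde{1},\tilde{b})))$ in terms of the sets $P_{\{1,2,3\}}$, $P_{\{u,v\}}$, and $P_{\{u\}}$ built from the pseudo-partition~(\ref{eqn:part3}). Substituting this description into the formula above gives the stated form of $I$. Since each step is an invocation of an already-proved result, the main obstacle is purely bookkeeping: being careful that ``type $3$'' matches $|\doc(I)|=3$ and that the auxiliary vector $\tilde{b}$ chosen to form $Q_*(-\tilde{1},\tilde{b})$ can be taken to satisfy $\tilde{b}\geq\tilde{M}$ as required by Theorem~\ref{thm:main-gen-ab-dual}, so that Proposition~\ref{prp:type 3} genuinely controls $S_u(D(Q_*(-\tilde{1},\tilde{b})))$.
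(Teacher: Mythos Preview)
Your proposal is correct and matches the paper's intent: the corollary is stated without proof as a direct consequence of Proposition~\ref{prp:retrieve} (existence and uniqueness of the zero-dimensional ideal with prescribed $\doc$) together with Proposition~\ref{prp:type 3} (the explicit description of $S_u(D(Q_*(-\tilde{1},\tilde{b})))$ in the order-generic $k=3$ case). Your careful unpacking of the construction preceding Proposition~\ref{prp:retrieve} and the verification that the auxiliary $\tilde{b}$ can be chosen compatibly is exactly the bookkeeping the paper leaves implicit.
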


\begin{rmks*}\ 
\begin{enumerate}[label=(\roman*)]
\item Corollary~\ref{cor:mon-type 3} characterizes
completely those monomial ideals $I$ of $R = K[x_1,\ldots,x_d]$
that make $R/I$ a local Artinian ring of type 3, where
$\doc(I)$ corresponds to an order-generic antichain
in ${\ints}^d$ in a purely
combinatorial way, using only the poset structure of ${\ints}^d$.
\item Note that since $S_u(D(Q_*(-\tilde{1},\tilde{b})))$ is an antichain,
then $I$ as presented in Corollary~\ref{cor:mon-type 3} is exactly
its minimal generation, that is,
$\{\tilde{x}^{\tilde{r}} : \tilde{r} \in S_u(D(Q_*(-\tilde{1},\tilde{b})))\}$
is the unique Gr\"{o}bner basis for $I$.
\end{enumerate}
\end{rmks*}

\section{Socles of general ideals}
\label{sec:general}

We have so far only been interested in the combinatorial properties
of monomial ideals of $R = K[x_1,\ldots,x_d]$ that can described
solely in terms of the monoid $[x_1,\ldots,x_d]$ viewed as a poset,
where the partial order is given by divisibility. As a consequence,
the role of the 
field $K$ has so far not played a major role in our investigations
on monomial ideals. We have at times further
assumed our monomial ideal $I\subseteq R$ to be {\em positive}, that is
$I\subseteq (x_1\cdots x_d)$, or each (generating) monomial 
$\tilde{x}^{\tilde{p}}$ of $I$ having $\tilde{p}\in {\nats}^d$.
Also, we so far only considered the the socle
$\soc(I) = \soc_{\m}(I)$ of $R$ w.r.t.~the specific maximal ideal 
$\m = (x_1,\ldots,x_d)\subseteq R$. 

Just to quickly iterate that we are not gaining anything by allowing
$K$-linear combination when considering monomial ideals, we note
that that for a monomial ideal 
$I = (\tilde{x}^{\tilde{p}_1},\ldots,\tilde{x}^{\tilde{p}_k})\subseteq R$
the set of its generators 
$M = \{\tilde{x}^{\tilde{p}_1},\ldots,\tilde{x}^{\tilde{p}_k}\}$ is always
a Gr\"{o}bner basis for $I$ w.r.t.~any term order on $[x_1,\ldots,x_d]$.
As a result, a fully reduced polynomial $f\in R\setminus \{0\}$ 
is exactly a linear combination of fully reduced monomials w.r.t. $I$,
that is, those monomials that do not reduce to zero in $R/I$.
In particular we have the following.
\begin{observation}
\label{obs:monoff}
If $I\subseteq R = K[x_1,\ldots,x_d]$ is a monomial ideal,
then the socle of $R/I$ w.r.t.~the maximal ideal 
$\m = (x_1,\ldots,x_d)$ is exactly the set of all $K$-linear
combinations of the monomials of the socle,
$\soc(I) = \spn_K(\{\tilde{x}^{\tilde{r}} : 
\tilde{x}^{\tilde{r}}\in \soc(I)\})$.
\end{observation}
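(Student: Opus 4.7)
The plan is to reduce the statement to the standard fact that a polynomial lies in a monomial ideal if and only if each of its monomial terms does, and then apply this to each $x_j f$.

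First, I would recall (or briefly verify) the following standard fact: if $J\subseteq R$ is a monomial ideal and $g = \sum_{i} c_i m_i$ is the expansion of a polynomial into distinct monomials $m_i$ with $c_i\in K^*$, then $g\in J$ if and only if $m_i\in J$ for every $i$. This follows because $J$ has a $K$-basis consisting of the monomials it contains, and in the expansion of $g$ in that basis (together with monomials not in $J$) the terms with $m_i\not\in J$ would have to vanish.

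Next, take $f\in\soc(I) = (I:\m)$ and write $f=\sum_i c_i m_i$ with distinct monomials $m_i$ and $c_i\in K^*$. For each $j\in[d]$ we have $x_j f = \sum_i c_i(x_j m_i) \in I$. Since $I$ is a monomial ideal and the $x_j m_i$ are still monomials (possibly with repetitions among different $i$, but each of a single monomial type times a nonzero scalar), grouping by monomial and applying the fact above yields $x_j m_i \in I$ for every $i$. As this holds for every $j\in[d]$, we get $m_i\in\soc(I)$ for every $i$. Hence $f\in\spn_K(\{\tilde{x}^{\tilde{r}} : \tilde{x}^{\tilde{r}}\in\soc(I)\})$, giving the nontrivial inclusion. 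The reverse inclusion is immediate since $\soc(I)$ is a $K$-subspace of $R$ containing all monomials it contains.

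The only subtle point, and what I would flag as the main obstacle, is making sure the grouping step in the previous paragraph is clean: different indices $i$ may produce the same monomial $x_j m_i$ (because multiplication by $x_j$ is not injective on the set of monomials only up to equality of products, which it in fact is for monomials in a polynomial ring), so one must either observe that $x_j$ times distinct monomials gives distinct monomials, or regroup terms before invoking the monomial-ideal fact. This is really just a bookkeeping remark, not a genuine difficulty, so the entire proof amounts to two short lines once the standard fact about monomial ideals is in place.
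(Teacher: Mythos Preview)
Your proof is correct and follows essentially the same approach as the paper. The paper justifies the observation in the preceding paragraph by noting that the monomial generators of $I$ form a Gr\"{o}bner basis for any term order, so a polynomial reduces to zero modulo $I$ if and only if each of its monomial terms does; this is exactly the ``standard fact'' you invoke, and from there the termwise argument applied to each $x_j f$ is the same. Your flagged concern about collisions is a non-issue, as you yourself note: multiplication by $x_j$ is injective on monomials in a polynomial ring, so the $x_j m_i$ remain distinct.
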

In this section we show that 
even in these mentioned seemingly restrictive case, we have managed
to capture many of the interesting combinatorial properties 
of the socle of a general ideal w.r.t.~a maximal ideal.

First we recall some notations and facts from commutative ring theory.
For an ideal $I\subseteq R$ let 
$\sqrt{I} = \{f\in R : f^n \in I\mbox{ for some } n\in\nats\}$
denote the radical of $I$. For a field $K$ and $R = K[x_1,\ldots,x_d]$ then 
$V(I) = \{\tilde{a}\in K^d : f(\tilde{a}) = 0\mbox{ for all } f\in I\}$ is
the affine variety defined by $I$. 
For a subset $U\subseteq K^d$ we let 
$I(U) = \{f\in R : f(\tilde{a}) = 0\mbox{ for all } \tilde{a}\in U\}$
denote the ideal of all polynomials in $R$ that vanish on $U$. 
For any field $K$ and a point $\tilde{a}\in K^d$ then clearly
$(x_1-a_1,\ldots,x_d-a_d)\subseteq R$ is a maximal ideal.
For the converse, we recall an important corollary of Hilbert's 
Nullstellensatz~\cite[p.~85]{Atiyah-Macdonald} 
and~\cite[p.~410]{Aluffi} as the following~\cite[Cor.~2.10, p.~406]{Aluffi}.
\begin{theorem}
\label{thm:HilbertNSS-cor}
If $K$ is an algebraically closed field, then an ideal of 
$K[x_1,\ldots,x_d]$ is maximal if and only if it has the form 
$\m_{\tilde{a}} = (x_1-a_1,\ldots,x_d-a_d)$ for some $\tilde{a}\in K^d$.
\end{theorem}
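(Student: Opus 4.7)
The plan is to derive the theorem as a straightforward consequence of Hilbert's Nullstellensatz (more precisely, of Zariski's lemma, which is the algebraic core of the Nullstellensatz), together with a direct verification of the easy direction.

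For the easy direction, I would consider the evaluation map $\varepsilon_{\tilde{a}} : R = K[x_1,\ldots,x_d] \to K$ given by $f \mapsto f(a_1,\ldots,a_d)$. This is a surjective $K$-algebra homomorphism whose kernel manifestly contains $\m_{\tilde{a}} = (x_1-a_1,\ldots,x_d-a_d)$. The reverse inclusion follows by writing any $f \in \ker(\varepsilon_{\tilde{a}})$ in the form $f = \sum_{i=1}^d g_i(\tilde{x}) \cdot (x_i - a_i)$, which one obtains inductively by using $x_i = (x_i - a_i) + a_i$ and expanding $f$ as a polynomial in the $(x_i - a_i)$. Hence $R/\m_{\tilde{a}} \cong K$ is a field and $\m_{\tilde{a}}$ is maximal.

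For the nontrivial direction, let $\m \subseteq R$ be any maximal ideal, so that $L := R/\m$ is a field. Since $R$ is a finitely generated $K$-algebra, so is $L$. Here I would invoke Zariski's lemma (the algebraic heart of the Nullstellensatz): any field that is finitely generated as an algebra over a field $K$ is a finite algebraic extension of $K$. Since $K$ is algebraically closed by hypothesis, this forces $L = K$. Letting $a_i \in K$ denote the image of $x_i$ under the quotient map $R \to R/\m = K$, we get $x_i - a_i \in \m$ for every $i \in [d]$, so $\m_{\tilde{a}} \subseteq \m$. Combined with the easy direction, $\m_{\tilde{a}}$ is already maximal, and since $\m$ is a proper ideal, equality $\m = \m_{\tilde{a}}$ must hold.

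The only real content is Zariski's lemma; everything else is formal. Since the theorem is stated as a corollary of Hilbert's Nullstellensatz and the paper cites \cite{Atiyah-Macdonald} and \cite{Aluffi} for it, the expected presentation is simply to cite the Nullstellensatz and assemble the two directions as above, rather than to reprove Zariski's lemma from scratch.
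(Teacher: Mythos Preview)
Your proof is correct and is the standard argument. However, the paper does not actually prove this theorem: it is stated as a quoted result, with citations to \cite[p.~85]{Atiyah-Macdonald} and \cite[Cor.~2.10, p.~406]{Aluffi}, and no proof is given in the paper itself. Your write-up is precisely the argument one finds in those references, so there is nothing to compare beyond noting that you have supplied what the paper chose to cite.
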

Recall the following theorem from~\cite[Prop.~1.11]{Atiyah-Macdonald}
on {\em prime avoidance}:
\begin{theorem}
\label{thm:prime-avoidance}
Let $\p_1,\ldots,\p_k$ be prime ideals in a commutative ring.
If $I\subseteq \bigcup_{i=1}^k{\p_i}$ is an ideal, then 
$I\subseteq \p_i$ for some $i$.
\end{theorem}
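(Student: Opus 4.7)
The plan is to argue by induction on $k$ and construct an element of $I$ that evades every $\p_i$ simultaneously, contradicting the hypothesis $I\subseteq \bigcup_i \p_i$.

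First I would dispose of the base case $k=1$, where $I\subseteq \p_1$ is exactly the conclusion. For the inductive step, assume $k\geq 2$ and that the result holds for any covering by fewer than $k$ primes. If for some index $i$ we have $I\subseteq \bigcup_{j\neq i}\p_j$, the inductive hypothesis finishes the proof, so we may assume that for each $i\in[k]$ there exists an element $x_i\in I$ with $x_i\notin \p_j$ for every $j\neq i$. If some such $x_i$ also fails to lie in $\p_i$, then $x_i\in I\setminus \bigcup_j \p_j$, directly contradicting $I\subseteq \bigcup_j \p_j$. Hence we may further assume $x_i\in \p_i$ for every $i$.

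Now I would exhibit an explicit element that avoids every $\p_i$. Set
\[
y \;=\; x_1 \;+\; x_2 x_3 \cdots x_k \;\in\; I.
\]
To check $y\notin \p_1$: the summand $x_1$ lies in $\p_1$, while each factor $x_2,\ldots,x_k$ lies outside the prime ideal $\p_1$, so their product is outside $\p_1$; thus $y$ is the sum of an element of $\p_1$ and an element not in $\p_1$, hence $y\notin \p_1$. To check $y\notin \p_i$ for $i\geq 2$: the product $x_2\cdots x_k$ has $x_i$ as a factor, and $x_i\in \p_i$, so this product lies in $\p_i$; but $x_1\notin \p_i$ (since $1\neq i$), so $y$ sits in the coset $x_1+\p_i$ and thus $y\notin \p_i$. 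Consequently $y\in I$ is an element that lies in no $\p_i$, contradicting the assumed covering.

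The only point requiring care is the use of primality: it enters in verifying that $x_2\cdots x_k\notin \p_1$, where one needs each $\p_i$ to be prime so that a product of non-elements stays a non-element. This is precisely why the hypothesis cannot be weakened to arbitrary ideals. No other step should present difficulty; the combinatorial trick of the element $y = x_1 + x_2\cdots x_k$ is the crux.
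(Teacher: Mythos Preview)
Your proof is correct and is the standard inductive argument for prime avoidance. Note, however, that the paper does not supply its own proof of this theorem; it merely recalls the statement from \cite[Prop.~1.11]{Atiyah-Macdonald}. Your argument is a minor variant of the one given there (Atiyah--Macdonald use the symmetric element $y=\sum_{i}\prod_{j\neq i}x_j$ rather than your asymmetric $y=x_1+x_2\cdots x_k$), but the mechanism is identical: build an element of $I$ which, modulo each $\p_i$, is a sum of one term outside $\p_i$ and remaining terms inside $\p_i$.
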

Since $R = K[x_1,\ldots,x_d]$ 
is Noetherian, every ideal $I\subseteq R$
has a primary decomposition $I = \bigcap_{i=1}^k{\q_i}$ which we can
assume to be minimal (or reduced). For each $i$ let $\p_i = \sqrt{\q_i}$ 
be the associated prime ideal. The set $\{\p_1,\ldots,\p_k\}$ of
associated primes of the ideal $I$ is uniquely determined by $I$ and
is always a finite set.

Suppose $\m\subseteq R$ is a maximal ideal containing the ideal $I$
is such that $\soc_{\m}(I)$ is nontrivial, so $\soc_{\m}(I) = (I:\m)\neq I$.
Since
\[
I\neq (I:\m) = \bigcap_{f\in\m}(I:f),
\]
then for each $f\in\m$ we have $(I:f)\neq I$ as well, and so
by~\cite[Prop.~4.7]{Atiyah-Macdonald} we then have 
\[
\m \subseteq \{f\in R: (I:f)\neq I\} = \bigcup_{i=1}^k\p_i.
\]
By Theorem~\ref{thm:prime-avoidance} above
we therefore have that $\m\subseteq \p_i$ for some $i$, and since
$\m$ is maximal we have $\m = \p_i$. We summarize in the following.
\begin{proposition}
\label{prp:soc-nontriv}
If $R$ is a Noetherian ring, $I\subseteq R$ an ideal and $\m\subseteq R$
a maximal ideal containing $I$ such that $\soc_{\m}(I) = (I:\m)\neq I$,
then $\m$ is one of the finitely many associated prime ideals of $I$.
\end{proposition}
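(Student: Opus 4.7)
The key observation is that the hypothesis $\soc_\m(I)\neq I$ is a statement about zero-divisors modulo $I$: it supplies an element annihilated by all of $\m$, which forces every element of $\m$ to be a zero-divisor on $R/I$. The standard primary decomposition machinery then identifies the zero-divisors with the union of associated primes, and prime avoidance plus maximality finishes the job.

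Concretely, I would first pick $f\in (I:\m)\setminus I$, which exists by assumption, so that $\bar f\in R/I$ is nonzero and satisfies $\m\bar f=0$. Then for any $g\in\m$, the element $f$ lies in $(I:g)\setminus I$, so $(I:g)\neq I$; equivalently, via $(I:\m)=\bigcap_{g\in\m}(I:g)$, I could deduce the same without naming $f$ explicitly. In either formulation, every $g\in\m$ is a zero-divisor on $R/I$.

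Next, I would invoke the classical fact (e.g.\ \cite[Prop.~4.7]{Atiyah-Macdonald}) that in a Noetherian ring the set $\{g\in R:(I:g)\neq I\}$ of zero-divisors modulo $I$ coincides with $\bigcup_{i=1}^k\p_i$, the union of the associated primes of $I$. This gives the containment
\[
\m\;\subseteq\;\bigcup_{i=1}^k\p_i.
\]
Since there are only finitely many $\p_i$ (another consequence of $R$ Noetherian), I can apply prime avoidance, Theorem~\ref{thm:prime-avoidance}, to conclude $\m\subseteq\p_i$ for some $i$, and then maximality of $\m$ forces $\m=\p_i$.

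The proof is essentially a chain of standard invocations, so there is no real obstacle once the hypothesis is reinterpreted as ``every element of $\m$ is a zero-divisor modulo $I$''. The only point requiring a little care is ensuring that both ingredients drawn from the Noetherian hypothesis are available: the finiteness of the set $\{\p_1,\dots,\p_k\}$ of associated primes (needed for prime avoidance) and the identification of the zero-divisor set with $\bigcup_i \p_i$.
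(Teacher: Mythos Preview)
Your proof is correct and follows essentially the same path as the paper: both use the identity $(I:\m)=\bigcap_{g\in\m}(I:g)$ (or, equivalently, a witness $f\in(I:\m)\setminus I$) to see that every $g\in\m$ satisfies $(I:g)\neq I$, then invoke \cite[Prop.~4.7]{Atiyah-Macdonald} to place $\m$ inside $\bigcup_i\p_i$, and finish with prime avoidance plus maximality. No meaningful difference in strategy or in the references cited.
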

If now $I\subseteq R = K[x_1,\ldots,x_d]$ is a monomial ideal
then every associated prime of $I$ is a {\em face ideal}, or
a {\em Stanley-Reisner ideal}~\cite[p.~19]{Miller-Sturmfels},
that is an ideal of the form $(x_{i_1},\ldots,x_{i_h})$
(See~\cite[Prop.~5.1.3]{MonAlg}.) Hence, if $\m\subseteq R$
is a maximal ideal such that $\soc_{\m}(I)\neq I$, then
by Proposition~\ref{prp:soc-nontriv} $\m$ must be a face ideal
and so we have the following.
\begin{proposition}
\label{prp:mon-max}
If $K$ is a field, $I\subseteq R = K[x_1,\ldots,x_d]$ a monomial ideal
and $\m\subseteq R$ a maximal ideal containing $I$ such that
$\soc_{\m}(I) = (I:\m)\neq I$, then $\m = (x_1,\ldots,x_d)$.
\end{proposition}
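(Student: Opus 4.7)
The plan is to combine Proposition~\ref{prp:soc-nontriv} with the well-known fact that every associated prime of a monomial ideal is generated by a subset of the variables $x_1,\ldots,x_d$. Once those two facts are in place, the conclusion is forced purely by maximality of $\m$.

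First, I would invoke Proposition~\ref{prp:soc-nontriv} directly: since $R = K[x_1,\ldots,x_d]$ is Noetherian, the hypothesis $\soc_{\m}(I) = (I:\m) \neq I$ implies that $\m$ is one of the finitely many associated primes of the monomial ideal $I$. Next, I would cite the standard result (for instance, \cite[Prop.~5.1.3]{MonAlg}) that every associated prime of a monomial ideal of $R$ is a \emph{face ideal}, i.e.\ of the form $(x_{i_1},\ldots,x_{i_h})$ for some subset $\{i_1,\ldots,i_h\}\subseteq[d]$. Thus $\m = (x_{i_1},\ldots,x_{i_h})$ for some such subset.

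Finally, I would argue that $h=d$. Indeed, if $h < d$, pick any index $j\in[d]\setminus\{i_1,\ldots,i_h\}$. Then $(x_{i_1},\ldots,x_{i_h}) \subsetneq (x_{i_1},\ldots,x_{i_h},x_j) \subseteq (x_1,\ldots,x_d)$, and the latter is a proper ideal of $R$ (the quotient is the field $K$). This contradicts the maximality of $\m$. Therefore $\{i_1,\ldots,i_h\} = [d]$ and $\m = (x_1,\ldots,x_d)$.

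There is essentially no obstacle here: the argument is an immediate combination of Proposition~\ref{prp:soc-nontriv}, the monomial-ideal primary-decomposition fact, and a one-line maximality argument. The only thing worth being careful about is that the cited structural result on associated primes of monomial ideals is invoked cleanly, since that is the one ingredient not proved from scratch in the present paper.
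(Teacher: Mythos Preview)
Your proposal is correct and matches the paper's own argument essentially verbatim: the paper also invokes Proposition~\ref{prp:soc-nontriv} to conclude $\m$ is an associated prime, cites \cite[Prop.~5.1.3]{MonAlg} for the fact that associated primes of monomial ideals are face ideals, and then uses maximality to force $\m=(x_1,\ldots,x_d)$. The only difference is that you spell out the maximality step explicitly, whereas the paper leaves it implicit.
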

If now $K$ is algebraically closed, then by Theorem~\ref{thm:HilbertNSS-cor}
we have a bijective correspondence between points $\tilde{a}\in K^d$
and maximal ideals $\m_{\tilde{a}} = (x_1-a_1,\ldots,x_d-a_d)$ of
$R = K[x_1,\ldots,x_d]$. 
\begin{observation}
\label{obs:Iinm}
Let $R = K[x_1,\ldots,x_d]$ where $K$ is algebraically closed and
$I\subseteq R$ a proper ideal. A maximal ideal $\m$ of $R$ contains
$I$ if and only if $\m = \m_{\tilde{a}}$ for some $\tilde{a}\in V(I)$.
\end{observation}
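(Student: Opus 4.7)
The plan is to reduce the observation directly to Theorem~\ref{thm:HilbertNSS-cor} (the weak Nullstellensatz) together with the elementary description of $\m_{\tilde{a}}$ as the kernel of an evaluation homomorphism. Since $K$ is algebraically closed, every maximal ideal of $R$ has the form $\m_{\tilde{a}} = (x_1 - a_1, \ldots, x_d - a_d)$ for some $\tilde{a} \in K^d$, so it suffices to show that $I \subseteq \m_{\tilde{a}}$ if and only if $\tilde{a} \in V(I)$.

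To make the two directions transparent, I would introduce the evaluation homomorphism $\mathrm{ev}_{\tilde{a}} : R \rightarrow K$ sending $f \mapsto f(\tilde{a})$ and observe that $\ker(\mathrm{ev}_{\tilde{a}}) = \m_{\tilde{a}}$. The inclusion $\m_{\tilde{a}} \subseteq \ker(\mathrm{ev}_{\tilde{a}})$ is immediate from evaluating each generator $x_i - a_i$ at $\tilde{a}$; the reverse inclusion follows because every $f \in R$ can be expanded as $f = f(\tilde{a}) + g$ with $g \in \m_{\tilde{a}}$ (for instance by applying the change of variables $y_i = x_i - a_i$), so $f(\tilde{a}) = 0$ forces $f \in \m_{\tilde{a}}$.

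Given this identification, the forward direction is: if $I \subseteq \m_{\tilde{a}}$, then every $f \in I$ lies in $\ker(\mathrm{ev}_{\tilde{a}})$, i.e.\ $f(\tilde{a}) = 0$ for all $f \in I$, which is precisely the statement $\tilde{a} \in V(I)$. The reverse direction is symmetric: if $\tilde{a} \in V(I)$ then every $f \in I$ satisfies $f(\tilde{a}) = 0$, so $I \subseteq \ker(\mathrm{ev}_{\tilde{a}}) = \m_{\tilde{a}}$.

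There is no real obstacle here; the statement is essentially a restatement of the correspondence between points and maximal ideals furnished by Theorem~\ref{thm:HilbertNSS-cor}. The only thing worth being careful about is to justify the expansion $f = f(\tilde{a}) + g$ with $g \in \m_{\tilde{a}}$, which one can either do by a direct Taylor-style argument or by noting that $R/\m_{\tilde{a}} \cong K$ via $\mathrm{ev}_{\tilde{a}}$ so that $\m_{\tilde{a}}$ is exactly the kernel of evaluation, hence automatically contains every polynomial vanishing at $\tilde{a}$.
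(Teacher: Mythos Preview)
Your proof is correct and matches the paper's approach: the paper states this as an observation without an explicit proof, treating it as an immediate consequence of Theorem~\ref{thm:HilbertNSS-cor}, which is exactly what you do by identifying $\m_{\tilde a}$ with the kernel of the evaluation map and reading off the equivalence $I\subseteq\m_{\tilde a}\Leftrightarrow\tilde a\in V(I)$.
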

For an ideal $I\subseteq R = K[x_1,\ldots,x_d]$ where $K$ is algebraically
closed and $\m_{\tilde{a}}$ a maximal ideal containing $I$ let
$\soc_{\tilde{a}}(I) := \soc_{\m_{\tilde{a}}}(I) = (I:\m_{\tilde{a}})$
denote the socle of $I$ w.r.t.~the maximal ideal $\m_{\tilde{a}}$.
By Proposition~\ref{prp:soc-nontriv} the following.
\begin{corollary}
\label{cor:soc-nontriv}
If $K$ is an algebraically closed field and 
$I\subseteq R = K[x_1,\ldots,x_d]$ is an ideal of $R$,
then for all but finitely many points $\tilde{a}\in V(I)$
the socle $\soc_{\tilde{a}}(I) = I$ is trivial.
\end{corollary}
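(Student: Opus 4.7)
The plan is to combine the three ingredients already assembled immediately before the statement: Observation~\ref{obs:Iinm}, Proposition~\ref{prp:soc-nontriv}, and the Noetherian property of $R = K[x_1,\ldots,x_d]$. First I would note that by Observation~\ref{obs:Iinm}, the maximal ideals of $R$ that contain $I$ are exactly the ideals of the form $\m_{\tilde{a}} = (x_1-a_1,\ldots,x_d-a_d)$ for $\tilde{a}\in V(I)$. Moreover, the assignment $\tilde{a}\mapsto \m_{\tilde{a}}$ is injective, since the coordinates of $\tilde{a}$ can be recovered as the unique scalars for which $x_i - a_i$ lies in $\m_{\tilde{a}}$; so counting exceptional points in $V(I)$ is the same as counting exceptional maximal ideals containing $I$.

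Next I would apply Proposition~\ref{prp:soc-nontriv}: if $\soc_{\tilde{a}}(I) = (I:\m_{\tilde{a}}) \neq I$, then $\m_{\tilde{a}}$ must be an associated prime of $I$. Since $R$ is Noetherian, the set of associated primes of $I$ is finite, so at most finitely many maximal ideals containing $I$ can arise this way. Transporting this finiteness back through the bijection above yields that $\soc_{\tilde{a}}(I) = I$ for all but finitely many $\tilde{a}\in V(I)$.

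I do not expect a serious obstacle here, since each step is a direct invocation of a previously established result. The only minor care needed is to check that each excluded maximal ideal indeed corresponds to a point of $V(I)$ (not to some other maximal ideal), but this is handled cleanly by Observation~\ref{obs:Iinm} together with the algebraically closed hypothesis on $K$ via Theorem~\ref{thm:HilbertNSS-cor}. So the proof is essentially a two-line glue between the preceding results.
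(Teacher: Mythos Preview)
Your proposal is correct and matches the paper's own argument, which simply records the corollary as an immediate consequence of Proposition~\ref{prp:soc-nontriv} (together with the finiteness of associated primes in the Noetherian ring $R$ and the identification of maximal ideals with points via Observation~\ref{obs:Iinm} and Theorem~\ref{thm:HilbertNSS-cor}). Your write-up is just a slightly more explicit version of the same one-line deduction.
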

Also, directly by Proposition~\ref{prp:mon-max} do we have the following
corollary.
\begin{corollary}
\label{cor:only-0}
If $K$ is an algebraically closed field,
$I\subseteq R = K[x_1,\ldots,x_d]$ is a monomial ideal of $R$
and $\soc_{\tilde{a}}(I)\neq I$, 
then $\tilde{a} = \tilde{0}$.
\end{corollary}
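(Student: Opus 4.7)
The plan is to chain together Proposition~\ref{prp:mon-max} with the Nullstellensatz correspondence of Theorem~\ref{thm:HilbertNSS-cor}. First I would observe that for $\soc_{\tilde{a}}(I) \neq I$ to even be possible, the maximal ideal $\m_{\tilde{a}}$ must contain $I$: this was argued explicitly in the paragraph following the remarks after Definition~\ref{def:doc(I)}, where it is shown that whenever $\m$ does not contain $I$, we have $\m + I = R$, which forces $(I : \m) = I$ and so a trivial socle. Thus the hypothesis $\soc_{\tilde{a}}(I) \neq I$ immediately gives us a maximal ideal $\m_{\tilde{a}}$ containing $I$ with nontrivial socle, which is exactly the setup of Proposition~\ref{prp:mon-max}.

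Second, I would invoke Proposition~\ref{prp:mon-max} to conclude $\m_{\tilde{a}} = (x_1,\ldots,x_d)$. Third, since $K$ is algebraically closed, Theorem~\ref{thm:HilbertNSS-cor} tells us every maximal ideal of $R$ has the form $\m_{\tilde{b}} = (x_1 - b_1, \ldots, x_d - b_d)$ for a unique $\tilde{b} \in K^d$ (uniqueness follows because $\m_{\tilde{b}}$ is precisely $I(\{\tilde{b}\})$, the ideal of polynomials vanishing at $\tilde{b}$, and distinct points admit a polynomial vanishing at one but not the other). Applying this uniqueness to the equality $\m_{\tilde{a}} = \m_{\tilde{0}}$ yields $\tilde{a} = \tilde{0}$.

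There is essentially no obstacle here beyond correctly invoking the two earlier results; the corollary is really a direct translation of Proposition~\ref{prp:mon-max} through the point-maximal ideal dictionary provided by the Nullstellensatz. The only subtlety worth flagging is that the hypothesis $\soc_{\tilde{a}}(I) \neq I$ implicitly demands $\m_{\tilde{a}} \supseteq I$ (equivalently, by Observation~\ref{obs:Iinm}, $\tilde{a} \in V(I)$), and this containment is exactly what is needed to feed into Proposition~\ref{prp:mon-max}.
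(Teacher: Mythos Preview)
Your proposal is correct and follows essentially the same approach as the paper, which simply states that the corollary follows ``directly by Proposition~\ref{prp:mon-max}.'' You have spelled out the two implicit steps the paper leaves to the reader: that $\soc_{\tilde{a}}(I)\neq I$ forces $I\subseteq\m_{\tilde{a}}$ (so Proposition~\ref{prp:mon-max} applies), and that the Nullstellensatz bijection of Theorem~\ref{thm:HilbertNSS-cor} then converts $\m_{\tilde{a}}=(x_1,\ldots,x_d)=\m_{\tilde{0}}$ into $\tilde{a}=\tilde{0}$.
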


As the combinatorial properties we want to investigate can be phrased
in terms of the monoid $[x_1,\ldots,x_d]$ of indeterminates as a poset, 
we obtain virtually the same combinatorial properties of
our monomial ideal $I\subseteq R = K[x_1,\ldots,x_d]$ regardless of what
the field $K$ is. Hence,
we might as well look at the extended ideal
$\overline{I} = I\overline{K}[x_1,\ldots,x_d]$ where $\overline{K}$
is the algebraic closure of $K$. This is so, since 
for any vector space (in particular ideals) $W$ over $K$, then
$B$ is a basis for $W$ as a vector space over $K$ if, and only if,
$B$ is a basis for $W\otimes_K\overline{K}$ as a vector space over
$\overline{K}$. In particular $\overline{R} := \overline{K}[x_1,\ldots,x_d] 
= \overline{K}\otimes_KK[x_1,\ldots,x_d]$,
and in general for any ideal $I\subseteq R$ the corresponding ideal 
$\overline{I}$ of $\overline{R}$ has the form 
$\overline{I} = I\otimes_K\overline{K}$. 
Hence, as we have been almost exclusively focusing
on the bases of the socle of a monomial ideal consisting of monomials
alone, we will not lose
any of the combinatorial poset structure of the bases by tensoring 
with $\overline{K}$, that is, assuming $K$ is algebraically closed.
In that case, as stated earlier, Theorem~\ref{thm:HilbertNSS-cor}
yields a bijective correspondence between points of $K^d$ and maximal
ideals of $R$.

At this point two natural questions arise: 
\begin{enumerate}[label=(\arabic*)]
\item What ideals $I\subseteq R$ have their socles $\soc_{\tilde{a}}(I)$
trivial for all $\tilde{a}\in V(I)$? 
\item Knowing just the socle $\soc_{\tilde{a}}(I)$ of an a priori
unknown ideal $I$ of $R$, can we retrieve the ideal $I$%
%in a similar fashion as we did for monomial ideals
?
\end{enumerate}
In general, the second question has a negative answer. However, in
numerous specific cases the answer is positive, for example
if we know $I$ has dimension zero, or if we know the socle
to be trivial. 
%But there is also some ambiguity here:
%knowing $\soc_{\tilde{a}}(I)$ as an ideal and/or a $K$-vector
%space is more informative than knowing  
%$\soc_{\tilde{a}}(\overline{I})$, or what $K$-basis vectors 
%of $\soc_{\tilde{a}}(I)\setminus I$ must be added to a $K$-basis
%of $I$ to obtain $\soc_{\tilde{a}}(I)$.
%(2) For those ideals
%$I\subseteq R$ with $\soc_{\tilde{a}}(I)$ non-trivial for some
%$\tilde{a}\in V(I)$, can we retrieve them ``back'' from
%$\soc_{\tilde{a}}(I)$ in a similar fashion as we have shown for
%generic monomial ideals? 

We consider the first question for $I = (f)$ a principal ideal 
of $R = K[x_1,\ldots,x_d]$.
Since $R$ is a UFD we can write $f = r_1^{b_1}\cdots r_h^{b_h}$ 
where each $r_i$ is irreducible in $R$. In this case we have 
the following minimal primary decomposition of $I$ as
\[
I = \bigcap_{i=1}^h (r_i^{b_i}),
\]
and the unique set of associated primes $\{\p_1,\ldots,\p_h\}$
where $\p_i = \sqrt{(r_i^{b_i})} = (r_i)$ for each $i$.
For $d\geq 2$ then clearly none of these principal prime ideals
$\p_i$ are maximal ideals of $R$, and hence, by
Proposition~\ref{prp:soc-nontriv}, $\soc_{\tilde{a}}(I)$
must be trivial.
\begin{observation}
If $I\subseteq R = K[x_1,\ldots,x_d]$ is a principal ideal where $K$ is 
algebraically closed and $d\geq 2$, then $\soc_{\tilde{a}}(I) = I$ is trivial
for all $\tilde{a}\in V(I)$.
\end{observation}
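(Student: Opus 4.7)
The plan is to use Proposition~\ref{prp:soc-nontriv} together with the very explicit primary decomposition of a principal ideal in the UFD $R = K[x_1,\ldots,x_d]$. Since $R$ is a UFD, I would write $f = r_1^{b_1}\cdots r_h^{b_h}$ with the $r_i$ pairwise non-associated irreducibles, observe that the ideals $(r_i^{b_i})$ are pairwise coprime (as their radicals $(r_i)$ are), and obtain the minimal primary decomposition $I = \bigcap_{i=1}^h (r_i^{b_i})$ with associated primes $\p_i = (r_i)$. This identification of the associated primes is the only ``algebraic'' ingredient and is already spelled out in the paragraph preceding the statement.

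Next I would show that, under the hypothesis $d\geq 2$, none of the $\p_i = (r_i)$ is a maximal ideal of $R$. The cleanest way is to note that the quotient $R/(r_i)$ has Krull dimension $d-1 \geq 1$, so it cannot be a field; equivalently, one can pick any variable $x_j$ not appearing in $r_i$ (which exists for $d$ large enough) or, more generally, observe that $R/(r_i)$ has transcendence degree $d-1$ over $K$. Either way, $(r_i)$ is a proper non-maximal prime.

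Now suppose for contradiction that $\tilde{a}\in V(I)$ satisfies $\soc_{\tilde{a}}(I) = (I:\m_{\tilde{a}}) \neq I$. By Observation~\ref{obs:Iinm} the maximal ideal $\m_{\tilde{a}}$ contains $I$, so Proposition~\ref{prp:soc-nontriv} applies: $\m_{\tilde{a}}$ must coincide with one of the associated primes $\p_i = (r_i)$. But we just showed that no $(r_i)$ is maximal, a contradiction. Hence $\soc_{\tilde{a}}(I) = I$ for every $\tilde{a}\in V(I)$.

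The only step that requires any care is the verification that $(r_i)$ is non-maximal when $d\geq 2$; everything else is a direct application of results already proved in the excerpt. I do not anticipate any real obstacle, as the hypothesis $d \geq 2$ is precisely what rules out the degenerate case $d=1$, where for $K$ algebraically closed every nonzero irreducible in $K[x_1]$ is linear and hence generates a maximal ideal.
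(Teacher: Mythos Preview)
Your approach is essentially identical to the paper's: factor $f$ in the UFD $R$, read off the primary decomposition $I=\bigcap_i(r_i^{b_i})$ with associated primes $(r_i)$, note these are non-maximal when $d\geq 2$, and apply Proposition~\ref{prp:soc-nontriv}. One small correction: your parenthetical that the $(r_i^{b_i})$ are ``pairwise coprime (as their radicals $(r_i)$ are)'' is false in general---for instance $(x)+(y)\neq R$ in $K[x,y]$---but this claim is neither needed nor used, since $\bigcap_i(r_i^{b_i})=(f)$ follows directly from the lcm description of intersections of principal ideals in a UFD, exactly as the paper states.
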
 
When $I$ is generated by two or more elements from $R$, things
are, needless to say, more involved, and it seems we must apply
the Euclidean algorithm to obtain Gr\"{o}bner bases for the ideal, in order
to conclude something fruitful -- we leave that as a question
in the next section.

\section{Summary and further questions}
\label{sec:some-q}

We briefly discuss some of the main results in this article
and post some some relevant and motivating questions.

Presenting everything in terms of ${\ints}^d$ or ${\nats}_0^d$
provided with the componentwise partial order, we showed in
Theorem~\ref{thm:main-gen} and its slight generalization
Theorem~\ref{thm:main-gen-ab} that a given antichain
$G$ of ${\ints}^d$ (corresponding to a monomial ideal of the
polynomial ring in $d$ variables) can always be retrieved from
the set $S_d(U(G^*))$ alone, or more generally from
$S_d(U(G^*(\tilde{a}.\tilde{b})))$ as stated in
Theorem~\ref{thm:main-gen-ab}. As a consequence, the monomial
ideal $I$ can always be retrieved from a specific derived version of 
$\doc(I)$ and also from a more general derived version of
it socle as stated in Theorem~\ref{thm:main-gen-ab}.

The main results in Section~\ref{sec:up-down} 
are Theorem~\ref{thm:main-gen-dual} and Theorem~\ref{thm:main-gen-ab-dual}
which are duals of the previously mentioned
Theorems~\ref{thm:main-gen} and~\ref{thm:main-gen-ab}.
This shows that any antichain $Q$ in ${\nats}^d$ is indeed the socle
for a monomial ideal whose minimal generators correspond to
$U(S_u(D(Q_*)))$ or more generally to $U(S_u(D(Q_*(\tilde{a},\tilde{b}))))$
in Theorem~\ref{thm:main-gen-ab}. As a consequence, for any set of incomparable
monomials in the monoid $[x_1,\ldots,x_d]$ w.r.t.~divisibility, there
are monomials ideals $I$ with the given set of monomials as 
$\doc(I)$. 

In Section~\ref{sec:zero-monomial} we focused on Artinian monomial ideals
and obtained some structure theorems for zero dimensional local
type $k$ rings $R/I$ where $I$ is a monomial ideal and $k = 1,2$.

In Section~\ref{sec:kgeq3} we obtained some results for zero dimensional
local type $k$ rings $R/I$ when $k\geq 3$ and where we assumeed
$\doc(I)$, or rather the power points of the corresponding
monomials, to be order-generic. This yielded a specific enumeration
of the minimal generators of $I$ in Proposition~\ref{prp:type 3}.

In the last Section~\ref{sec:general} we stated some observations
about socles of general ideals of Noetherian rings 
and argued that a lot of the interesting combinatorics really is
captured by monomial ideals. 

There are still many questions worth considering related
to what has been covered.
\begin{question}
Let $K$ be algebraically closed, and $I\subseteq R = K[x_1,\ldots,x_d]$ 
an ideal such that that $\soc_{\tilde{a}}(I) = I$ is trivial for all 
$\tilde{a}\in V(I)$. Can we
conclude something about the Gr\"{o}bner basis for $I$ w.r.t.~some term order?
%If $I\subseteq R = K[x_1,\ldots,x_d]$ is an ideal with
%a Gr\"{o}bner basis consisting of fewer than $d$ polynomials,
%is it always the case that $\soc_{\tilde{a}}(I) = I$ is trivial
%for all $\tilde{a}\in V(I)$?
\end{question}
%\begin{question}
%Given an antichain $Q\subseteq {\nats}^d$, which monomial ideals have a socle
%corresponding to $Q$? Can these be characterized in a nice manner?
%\end{question} XXX This we have essentially answered, Geir.
\begin{question}
Can we generalize the results to general ideals, using 
Gr\"{o}bner bases in terms of some term orders?
If not, perhaps one can obtain similar results for
toric ideals, since there all relations can be obtained
and described within the monoid $[x_1,\ldots,x_d]\subseteq R$.
\end{question}
\begin{question}
When translating the results in previous sections
into results for polynomial rings $R = K[x_1,\ldots,x_d]$,
or general Noetherian rings for that matter,
and their ideals, can anything be phrased simpler by using
the ring structure instead or in addition to the partial order
on the monomials induced by divisibility?
\end{question}
%\begin{question}
%Can the results in previous sections be phrased more ``cleanly''
%or abstractly, ideally something like the ideal theory of 
%Emmy Noether, without explicit quotient rings of polynomial rings?
%\end{question}

\providecommand{\bysame}{\leavevmode\hbox to3em{\hrulefill}\thinspace}
\providecommand{\MR}{\relax\ifhmode\unskip\space\fi MR }
% \MRhref is called by the amsart/book/proc definition of \MR.
\providecommand{\MRhref}[2]{%
  \href{http://www.ams.org/mathscinet-getitem?mr=#1}{#2}
}
\providecommand{\href}[2]{#2}

\end{document}